%
\documentclass[runningheads]{llncs}
\usepackage{graphicx}
\usepackage[symbol]{footmisc}

\usepackage{float}
\usepackage{mathtools}
\usepackage{amssymb, amsmath, latexsym}
\usepackage{nicefrac}
\usepackage{hhline}
\usepackage{multirow}
\usepackage{pifont}
\usepackage[colorinlistoftodos,bordercolor=orange,backgroundcolor=orange!20,linecolor=orange,textsize=scriptsize]{todonotes}
\usepackage{algorithm}\usepackage{algpseudocode}

%

\def \R {\mathbb R}
\def\eqdef{\overset{\text{def}}{=}}

\newcommand{\EndProof}{\begin{flushright}$\square$\end{flushright}}

\newtheorem{assumption}{Assumption}
  


\newcommand{\EE}{\mathbf{E}}

\def\R{\mathbb{R}}
\newcommand{\E}{{\mathbb E}}

\def\R{\mathbb R}
\def\E{\mathbb E}
\def\EE{\mathbb E}

\def\la{\langle}
\def\ra{\rangle}

\begin{document} 
\title{One-Point Gradient-Free Methods for Smooth and Non-Smooth Saddle-Point Problems\thanks{
The research of A. Beznosikov and A. Gasnikov in Algorithm 1, Theorems 1-3 was supported by Russian Science Foundation (project No. 21-71-30005). The research of V. Novitskii in Algorithms 2, Theorems 4-7 was partially supported by Andrei Raigorodskii scholarship.
}}
\titlerunning{One-Point Gradient-Free Methods for Saddle-Point Problems}
%
\author{Aleksandr Beznosikov\inst{1,2}\and
 Vasilii Novitskii\inst{1} \and
Alexander Gasnikov\inst{1,2}}
\authorrunning{A. Beznosikov, V. Novitskii, A. Gasnikov}
%
\institute{Moscow Institute of Physics and Technology, Dolgoprudny, Russia \and
Higher School of Economics, Russia 
}
\maketitle              
\begin{abstract}

In this paper, we analyze gradient-free methods with one-point feedback for stochastic saddle point problems $\min_{x}\max_{y}  \varphi(x, y)$. For non-smooth and smooth cases, we present an analysis in a general geometric setup with arbitrary Bregman divergence. For problems with higher order smoothness, the analysis is carried out only in the Euclidean case. The estimates we have obtained repeat the best currently known estimates of gradient-free methods with one-point feedback for problems of imagining a convex or strongly convex function. The paper uses three main approaches to recovering the gradient through finite differences: standard with a random direction, as well as its modifications with kernels and residual feedback. We also provide experiments to compare these approaches for the matrix game.

\keywords{saddle-point problem \and zeroth order method \and one-point feedback\and stochastic optimization.}
\end{abstract}
\section{Introduction}

This paper is devoted to solving the saddle-point problem:
\begin{equation}
    \label{problem}
    \min_{x\in \mathcal{X}}\max_{y\in \mathcal{Y}}  \varphi(x, y).  
\end{equation}
It has many practical applications. These are the already well-known and classic matrix game and Nash equilibrium, as well as modern machine learning problems: Generative Adversarial Networks (GANs) \cite{goodfellow2016nips} and Reinforcement Learning (RL) \cite{pmlr-v119-jin20f}. We assume that only zeroth-order information about the function is available, i.e. only its values, not a gradient, hessian, etc. This concept is called a Black-Box and arises in optimization \cite{Nesterov}, adversarial training \cite{Chen_2017}, RL \cite{fazel2018global}. To make the problem statement more complex, but close to practice, it is natural to assume that we have access inexact values of function $\varphi(x, y, \xi)$, for example, with some random noise $\xi$. But even with the help of such an oracle, it is possible to recover some estimate of the gradient of a function in terms of finite differences.

Let us highlight two main approaches to such gradient estimates. The first approach is more well researched in the literature and is called a two-point feedback:
$$\frac{n}{2\tau} (\varphi(x+  \tau \mathbf{e}_x, y+  \tau \mathbf{e}_y, \xi)  - \varphi(x - \tau \mathbf{e}_x, y-  \tau \mathbf{e}_y, \xi)) \left(
    \begin{array}{c}
    \mathbf{e}_x\\
    -\mathbf{e}_y \\
    \end{array}
    \right).$$
An important feature of this approach is that it is assumed that we were able to obtain the values of the function in points $(x+  \tau \mathbf{e}_x, y+  \tau \mathbf{e}_y)$ and $(x - \tau \mathbf{e}_x, y-  \tau \mathbf{e}_y)$ with the same realization of the noise $\xi$. From the point of view of theoretical analysis, such an assumption is strong and gives good guarantees of convergence \cite{duchi2013optimal,Shamir15,Nesterov}. But from a practical point of view, this is a very idealistic assumption. Therefore, it is proposed to consider the concept of one-point feedback (which this paper is about):
$$\frac{n}{2\tau} (\varphi(x+  \tau \mathbf{e}_x, y+  \tau \mathbf{e}_y, \xi^+)  - \varphi(x - \tau \mathbf{e}_x, y-  \tau \mathbf{e}_y, \xi^-)) \left(
    \begin{array}{c}
    \mathbf{e}_x\\
    -\mathbf{e}_y \\
    \end{array}
    \right).$$
In general $\xi^+ \neq \xi^-$. As far as we know, the use of methods with one-point approximation for saddle-point problems has not been studied at all in the literature. This is the main goal of our work.

\subsection{Related works} 

Since the use of one-point feedback for saddle-point problems is new in the literature, we present related papers in two categories: two-point gradient-free methods for saddle-point problems, and one-point methods for minimization problems. Partially the results of these works are transferred to Table \ref{summary_1}.

\textbf{Two-point for saddle-point problems.} Here, we first highlight work for non-smooth saddle-point problems \cite{aleks2020gradientfree}, as well as work for smooth ones \cite{sadiev2020zeroth}. Note that in these papers an optimal estimate was obtained in the non-smooth case, and in the smooth case only for a special class of "firmly smooth" saddle-point problems. Also note the work devoted to coordinated methods for matrix games \cite{carmon2020coordinate}, which is also close to our topic.

\textbf{One-point for minimization problems.} 
First of all, we present works that analyze functions with higher order smoothness:
\cite{bach2016highly,akhavan2020exploiting,novitskii2021improved}. These works are united by  the technique of special random kernels, which allow you to use the smoothness of higher orders. Note that there is an error in work \cite{bach2016highly}, therefore Table 1 shows the corrected result (according to the note from \cite{akhavan2020exploiting}). The special case of higher order smoothness is also interesting -- the ordinary smoothness, it is also analyzed in \cite{bach2016highly,akhavan2020exploiting,novitskii2021improved}, in addition we note the papers \cite{gasnikov2017stochastic,zhang2020improving}. A nonsmooth analysis is presented in \cite{gasnikov2017stochastic,zhang2020improving}. Note that in paper \cite{gasnikov2017stochastic}, not only the Euclidean setup is analyzed, but also the general case with an arbitrary Bregman divergence, which gives additional advantages in the estimates of the convergence (see Table \ref{summary_1}).

\subsection{Our contribution}

In the nonsmooth case, we consider convex-concave and strongly-convex-strongly-concave problems with bounded $\nabla_x \varphi(x,y)$, $\nabla_y \varphi(x,y)$ on the optimization set. Our algorithm is modofocation of Mirror Descent with arbitrary Bregman divergence. The estimates we obtained coincide with the estimates for convex optimization with one-pointed feedback \cite{gasnikov2017stochastic,zhang2020improving}. Using the correct geometry helps to reduce the contribution of the problem dimension to the final convergence estimate. In particular, in the entropy setting, convergence depends on the dimension of the problem linearly (see Table \ref{summary_1} for more details in convex-concave case and Table \ref{summary_2} -- in strongly-convex-strongly-concave). 

In the smooth case we obtained the estimates of the convergence rate with arbitrary Bregman divergence for convex-concave case and in Euclidean setup for strongly-convex-strongly-concave case. These estimates also coincide with the estimates for convex optimization with one-point feedback \cite{gasnikov2017stochastic}. 

To the best of our knowledge this is the first time when exploiting higher-order smoothness helps to improve performance in saddle-point problems in both  strongly-convex-strongly-concave and convex-concave cases. The results also coincide with the estimates for minimization \cite{novitskii2021improved,akhavan2020exploiting}. 

In Tables \ref{summary_1} and \ref{summary_2} one can find a comparison of the oracle complexity of known results with zeroth-order methods for saddle-point problems in related works. Factor $q$ depends on geometric setup of our problem and gives a benefit when we work in the Hölder, but non-Euclidean case (use non-Euclidean prox), i.e. $\| \cdot\| = \|\cdot\|_p$ and $p \in [1;2]$, then $\| \cdot \|_* = \| \cdot\|_q$, where $\nicefrac{1}{p} +\nicefrac{1}{q} = 1$. Then $q$ takes values from $2$ to $\infty$, in particular, in the Euclidean case $q=2$, but when the optimization set is a simplex, $q = \infty$. 
In higher-order smooth case we consider functions satisfying so called generalized Hölder condition with parameter $\beta > 2$ (see inequality \eqref{Hölder-condition} below). Note that it is prefer to use higher-order smooth methods rather than smooth methods only if $\beta > 3$.
\renewcommand{\arraystretch}{1.5}
\renewcommand{\tabcolsep}{5pt} 
\begin{table}[h!]
\begin{center}
\begin{tabular}{ccccc}
\textbf{Case}  &  \textbf{Oracle} & \textbf{Prob.} & \textbf{Complexity}  & \textbf{Reference}  \\ \hline
\multirow{3}{*}{non-smooth} & two-point & SP  & $\mathcal{O} \left( n^{\frac{2}{q}} \cdot \varepsilon^{-2}\right)$  & \cite{aleks2020gradientfree}\\ \cline{2-5} 
                  & \multirow{2}{*}{one-point} & Min  & $\mathcal{O} \left( n^{1 + \frac{2}{q}} \cdot\varepsilon^{-4}\right)$  & \cite{gasnikov2017stochastic}  \\ \cline{3-5} 
                  &          & SP & $\mathcal{O} \left( n^{1 + \frac{2}{q}} \cdot \varepsilon^{-4}\right)$  & this paper  \\ \hline
\multirow{3}{*}{smooth} & two-point            & SP  & $\mathcal{O} \left( [n^{\frac{2}{q}} \text{ or } n] \cdot \varepsilon^{-2}\right)$  & \cite{sadiev2020zeroth}  \\ \cline{2-5} 
                  & \multirow{2}{*}{ one-point} & Min& $\tilde{\mathcal{O}} \left(n^{2} \cdot \varepsilon^{-3} \right)$ & \cite{gasnikov2017stochastic} \\ \cline{3-5} 
                  &          & SP  & $\tilde{\mathcal{O}} \left(n^{2} \cdot \varepsilon^{-3} \right)$ & this paper \\ \hline
                  
\multirow{2}{*}{higher order smooth} 
                  & \multirow{2}{*}{ one-point} & Min  & $\tilde{\mathcal{O}} \left(n^{2+\frac{2}{\beta-1}} \cdot \varepsilon^{-2-\frac{2}{\beta-1}} \right)$ & \cite{novitskii2021improved,akhavan2020exploiting} \\ \cline{3-5} 
                  &          & SP  & $\tilde{\mathcal{O}} \left(n^{2+\frac{2}{\beta-1}} \cdot \varepsilon^{-2-\frac{2}{\beta-1}} \right)$ & this paper \\ \hline
\end{tabular}
\end{center}
\caption{Comparison of oracle complexity of one-point/two-point 0th-order methods for non-smooth/smooth \textbf{convex} minimization (Min) and \textbf{convex-concave} saddle-point (SP) problems under different assumptions. $\varepsilon$ means the accuracy of the solution, $n$ -- dimension of the problem, $q = 2$ for the Euclidean case and $q = \infty$ for setup of $\|\cdot\|_1$-norm.}
\label{summary_1}
\end{table}
\begin{table}[h!]
\begin{center}
\begin{tabular}{ccccc}
\textbf{Case}  &  \textbf{Oracle} & \textbf{Prob.} & \textbf{Complexity}  & \textbf{Reference}  \\ \hline
\multirow{2}{*}{non-smooth} 
                  & \multirow{2}{*}{one-point} & Min  & $\mathcal{\tilde O} \left( n^{2} \cdot\varepsilon^{-3}\right)$  & \cite{gasnikov2017stochastic}  \\ \cline{3-5} 
                  &          & SP & $\mathcal{\tilde O} \left( n^{2} \cdot \varepsilon^{-3}\right)$  & this paper  \\ \hline
\multirow{3}{*}{smooth} & two-point            & SP  & $\mathcal{O} \left( n \cdot \varepsilon^{-1} \right)$  & \cite{sadiev2020zeroth}  \\ \cline{2-5} 
                  & \multirow{2}{*}{ one-point} & Min& 
                  $\tilde{\mathcal{O}} \left( n^2 \cdot \varepsilon^{-2} \right)$
                  & \cite{gasnikov2017stochastic} 
                  \\ \cline{3-5} 
                  &          & SP  & 
                  $\tilde{\mathcal{O}} \left( n^2 \cdot \varepsilon^{-2} \right)$
                  & this paper \\ \hline
                  
\multirow{2}{*}{higher order smooth} 
                  & \multirow{2}{*}{ one-point} & Min  & 
                  $\tilde{\mathcal{O}} \left(n^{2+\frac{1}{\beta-1}} \cdot \varepsilon^{-\frac{\beta}{\beta-1}} \right)$
                  & \cite{novitskii2021improved,akhavan2020exploiting} 
                  \\ \cline{3-5} 
                  &          &                  SP  & 
                  $\tilde{\mathcal{O}} \left(n^{2+\frac{1}{\beta-1}} \cdot \varepsilon^{-\frac{\beta}{\beta-1}} \right)$
                  & this paper \\ \hline
\end{tabular}
\end{center}
\caption{Comparison of oracle complexity of one-point/two-point 0th-order methods for non-smooth/smooth \textbf{strongly-convex} minimization (Min) and \textbf{strongly-convex-strongly-concave} saddle-point (SP) problems under different assumptions.}
\label{summary_2}
\end{table}

\section{Preliminaries}

To begin with, we introduce some notation and definitions that we use in the work.

\subsection{Notation}

We use $\la x,y \ra \eqdef \sum_{i=1}^nx_i y_i$ to denote inner product of $x,y\in\R^n$ where $x_i$ is the $i$-th component of $x$ in the standard basis in $\R^n$. Then it induces $\ell_2$-norm in $\R^n$ in the following way $\|x\|_2 \eqdef \sqrt{\la x, x \ra}$. We define $\ell_p$-norms as $\|x\|_p \eqdef \left(\sum_{i=1}^n|x_i|^p\right)^{\nicefrac{1}{p}}$ for $p\in(1,\infty)$ and for $p = \infty$ we use $\|x\|_\infty \eqdef \max_{1\le i\le n}|x_i|$. The dual norm $\|\cdot\|_q$ for the norm $\|\cdot\|_p$ is denoted in the following way: $\|y\|_q \eqdef \max\left\{\la x, y \ra\mid \|x\|_p \le 1\right\}$. Operator $\EE[\cdot]$ is full mathematical expectation and operator $\EE_\xi[\cdot]$ express conditional mathematical expectation. 

\begin{definition}[$\mu$-strong convexity]
Function $f(x)$ is $\mu$-strongly convex w.r.t. $\| \cdot \|$-norm on $\mathcal{X}\subseteq \R^n$ when it is continuously differentiable and there is a constant $\mu > 0$ such that the following inequality holds:
\begin{equation*}
\label{strong convexity}
f(y) \geq f(x) + \langle \nabla f(x), y-x \rangle + \frac{\mu}{2}\|y-x\|^2, \quad \forall\ x, y\in \mathcal{X}.
\end{equation*}
\end{definition}

\begin{definition}[Prox-function]
Function $d(z): \mathcal{Z} \to \mathbb{R}$ is called prox-function if $d(z)$ is $1$-strongly convex w.r.t. $\| \cdot \|$-norm and differentiable on $\mathcal{Z}$. 
\end{definition}
\begin{definition}[Bregman divergence]
Let $d(z): \mathcal{Z} \to \mathbb{R}$ is prox-function. For  any  two  points  $z,w \in \mathcal{Z}$ we define Bregman divergence $V_z(w)$ associated with $d(z)$ as follows: 
\begin{equation*}
    \label{Bregman}
    V_z(w) = d(z) - d(w) - \langle \nabla d(w), z - w \rangle.
\end{equation*}
\end{definition}

We denote the Bregman-diameter $\Omega_{\mathcal{Z}}$ of $\mathcal{Z}$ w.r.t.\ $V_{z_1}(z_2)$ as \\
$\Omega_{\mathcal{Z}} \eqdef \max\{\sqrt{2V_{z_1}(z_2)}\mid z_1, z_2 \in \mathcal{Z}\}$.

\begin{definition}[Prox-operator]
Let $V_z(w)$ Bregman divergence. For all $x \in \mathcal{Z}$ define prox-operator of $\xi$:
\end{definition}
\begin{equation*}
    \text{prox}_x (\xi) = \text{arg}\min_{y \in \mathcal{Z}} \left(V_x(y) + \langle \xi , y \rangle \right).
\end{equation*}

Now we are ready to formally describe the problem statement, as well as the necessary assumptions.

\subsection{Settings and assumptions}

As mentioned earlier, we consider the saddle-point problem \eqref{problem}, where $\varphi(\cdot, y)$ is convex function defined on compact convex set $\mathcal{X} \subset \mathbb{R}^{n_x}$,  $\varphi(x,\cdot)$ is concave function defined on compact convex set $\mathcal{Y} \subset \mathbb{R}^{n_y}$. For convenience, we denote $\mathcal{Z} =  \mathcal{X} \times  \mathcal{Y}$  and then $z\in \mathcal{Z} $ means $z \eqdef (x,y)$, where $x \in \mathcal{X}$,  $y \in \mathcal{Y}$.  When we use $\varphi(z)$, we mean $\varphi(z) = \varphi(x,y)$.

\begin{assumption}[Diameter of $\mathcal{Z}$] 
Let the compact set $\mathcal{Z}$ have diameter $\Omega$.
\end{assumption}

\begin{assumption}[$M$-Lipschitz continuity]
Function $\varphi(z)$ is $M$-Lipschitz continuous in certain neighbourhood of $\mathcal{Z}$ with $M > 0$ w.r.t. norm $\|\cdot\|_2$ when 
\begin{equation*}
| \varphi(z)- \varphi(z')|\leq M\|z-z'\|_2, \quad \forall\ z, z'\in \mathcal{Z}.
\end{equation*}
\end{assumption}
One can prove that for all $z\in \mathcal{Z}$ we have 
\begin{equation}
\label{bound}
\|\tilde \nabla \varphi(z)\|_2 \leq M.
\end{equation}
\begin{assumption}[$\mu$-strong convexity--strong concavity]
Function $\varphi(z)$ is $\mu$-strongly-convex-strongly-concave in $\mathcal{Z}$ with $\mu > 0$ w.r.t. norm $\|\cdot\|_2$ when  $\varphi(\cdot, y)$ is $\mu$-strongly-convex for all $y$ and  $\varphi(x, \cdot)$ is $\mu$-strongly-concave for all $x$ w.r.t. $\| \cdot \|_2$.
\end{assumption}

Hereinafter, by $\tilde \nabla \varphi(z)$ we mean a block vector consisting of two vectors $\nabla_x \varphi(x,y)$ and $-\nabla_y \varphi(x,y)$.
Recall that we do not have access to oracles $\nabla_x {\varphi}(x,y)$ or $\nabla_y {\varphi}(x,y)$. We only can use an inexact stochastic zeroth-order oracle $\tilde{\varphi}(x, y, \xi, \delta)$ at each iteration. Our model corresponds to the case when the oracle gives an inexact noisy function value. We have stochastic unbiased noise, depending on the random variable $\xi$ and biased deterministic noise $\delta$. One can write it the following way:
\begin{eqnarray}
\label{PrSt1}
    \tilde \varphi (x,y, \xi)  = \varphi(x,y) + \xi + \delta(x,y).
\end{eqnarray}
Note that $\delta$ depends on point $(x, y)$, and $\xi$ is generated randomly regardless of this point.

\begin{assumption}[Noise restrictions]\label{noise_restrictions} Stochastic noise $\xi$ is unbiased with bounded variance, $\delta$ is bounded, i.e. there exists $\Delta, \sigma >0$ such that
\begin{eqnarray}
\label{eq:PrSt2}
    \EE\xi =0, ~~~~~\EE\left[\xi^2\right] \leq \sigma^2, ~~~~~ |\delta|\leq \Delta.
\end{eqnarray}
\end{assumption}

\section{Theoretical results}

Since we do not have access to $\nabla_x {\varphi}(x,y)$ or $\nabla_y {\varphi}(x,y)$, it is proposed to replace them with finite differences. We present two variants: using a random euclidean direction \cite{Shamir15,gasnikov2017stochastic} in non-smooth case and a kernel approximation \cite{akhavan2020exploiting,novitskii2021improved} in smooth. These two concepts will be discussed in more detail later in the respective sections. As mentioned earlier, we work with one-point feedback.
We use Mirror Descent as the basic algorithm, but with approximations instead of gradient.

\subsection{Non-smooth case} \label{31}

\textbf{Random euclidean direction.} For $\mathbf{e} \in \mathcal{RS}^n_2(1)$ (a random vector uniformly distributed on the Euclidean unit sphere) and some constant $\tau$ let $\tilde \varphi(z + \tau \mathbf{e}, \xi) \eqdef \tilde \varphi(x + \tau \mathbf{e}_x, y+ \tau \mathbf{e}_y, \xi)$, where $\mathbf{e}_x$ is the first part of $\mathbf{e}$ size of dimension $n_x$, and $\mathbf{e}_y$ is the second part of dimension $n_y$. Then define estimation of the gradient through the difference of functions:
\begin{equation}
    \label{eq:PrSt3}
    g(z, \mathbf{e}, \tau,  \xi^{\pm}) = \frac{n \left(\tilde \varphi(z + \tau \mathbf{e},  \xi^+) -  \tilde \varphi(z - \tau \mathbf{e},  \xi^-)\right)}{2\tau}\left(
    \begin{array}{c}
    \mathbf{e}_x\\
    -\mathbf{e}_y \\
    \end{array}
    \right), 
\end{equation}
\begin{minipage}{0.53\linewidth}
     \begin{algorithm}[H]
	\caption{{\tt zoopMD}}
	\label{alg1}
\begin{algorithmic}
\State 
\noindent {\bf Input:} $z_0$, $N$, $\gamma$, $\tau$.
\For {$k=0,1, 2, \ldots, N$ }
    \State $z_{k+1} = \text{prox}_{z_k}(\gamma_k\cdot g(z_k, \mathbf{e}_k, \tau,  \xi^{\pm}_k)$.
\EndFor
\State 
\noindent {\bf Output:} $\bar z_{N}$.
\end{algorithmic}
\end{algorithm}
\end{minipage}
\begin{minipage}{0.05\textwidth}
\end{minipage}
\begin{minipage}{0.46\textwidth}
where $n = n_x +n_y$. It is important that $\xi^+$ and $\xi^-$ are different variables -- this corresponds to the one-point concept. 
Next, we present Algorithm \ref{alg1} -- a modification of Mirror Descent with \eqref{eq:PrSt3}. Note that any Bregman divergence can be used in the prox operator. This allows us to take into ac-
\end{minipage}\\
count the geometric setup of the problem. $\mathbf{e}_k$ and $\xi^{\pm}_k$ are generated independently of the previous iterations and of each other. Here $\bar z_{N} = \frac{1}{N+1} \sum_{i=0}^{N} z_{i}$. Below we give technical facts about \eqref{eq:PrSt3}. Note that we do not provide proofs in the main part of the paper, they are all in the Appendix.

\begin{lemma} [see Lemma 2 from \cite{Beznosikov} or Lemma 1 from \cite{aleks2020gradientfree}]
\label{beznos}
    For $g(z, \mathbf{e}, \tau,  \xi^{\pm})$ defined in \eqref{eq:PrSt3} under Assumptions 2 and 4 the following inequality holds:
    \begin{eqnarray}
    \label{boundlem0}
        \mathbb{E}\left[ \|g(z, \mathbf{e}, \tau,  \xi^{\pm})\|^2_q\right] \leq 3a_q^2 \left(3nM^2+ \frac{n^2(\sigma^2 + \Delta^2)}{\tau^2} \right),
    \end{eqnarray}
    where $a^2_q$ is determined by $\EE[\|e\|_q^2] \leq \sqrt{\EE[\|e\|_q^4]} \le a^2_q$ and the following statement is true
    \begin{eqnarray}
    a_q^2 = \min\{2q - 1, 32 \log n - 8\} n^{\frac{2}{q} - 1}, \quad \forall n \geq 3.\label{eq:condition_on_u}
\end{eqnarray}
\end{lemma}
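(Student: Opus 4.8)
The plan is to reduce everything to moments of the scalar prefactor and of $\|\mathbf{e}\|_q$, isolating the one genuinely non-trivial estimate. First I would note that flipping the sign of the $y$-block leaves the $\ell_q$-norm unchanged, so $\left\|\left(\mathbf{e}_x,-\mathbf{e}_y\right)\right\|_q=\|\mathbf{e}\|_q$ and hence $\|g(z,\mathbf{e},\tau,\xi^\pm)\|_q^2=\frac{n^2}{4\tau^2}\,\Xi^2\,\|\mathbf{e}\|_q^2$, where $\Xi\eqdef\tilde\varphi(z+\tau\mathbf{e},\xi^+)-\tilde\varphi(z-\tau\mathbf{e},\xi^-)$. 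Substituting the oracle model \eqref{PrSt1} I would split $\Xi=A+B+C$ with $A\eqdef\varphi(z+\tau\mathbf{e})-\varphi(z-\tau\mathbf{e})$ (the exact difference), $B\eqdef\xi^+-\xi^-$ (stochastic noise) and $C\eqdef\delta(z+\tau\mathbf{e})-\delta(z-\tau\mathbf{e})$ (deterministic bias), and use $\Xi^2\le 3(A^2+B^2+C^2)$. Taking expectations then leaves three terms to control.

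For the two noise terms I would exploit that $\xi^+,\xi^-$ are drawn independently of $\mathbf{e}$ and of each other. By Assumption~\ref{noise_restrictions}, $\EE[B^2]=\EE[(\xi^+)^2]+\EE[(\xi^-)^2]\le 2\sigma^2$ and $C^2\le\left(|\delta(z+\tau\mathbf{e})|+|\delta(z-\tau\mathbf{e})|\right)^2\le 4\Delta^2$. Since $B$ is independent of $\mathbf{e}$ and $C^2$ is bounded pointwise, both factor against $\EE[\|\mathbf{e}\|_q^2]\le a_q^2$, giving $\frac{3n^2}{4\tau^2}\EE\!\left[(B^2+C^2)\|\mathbf{e}\|_q^2\right]\le\frac{3n^2(\sigma^2+\Delta^2)}{\tau^2}a_q^2$, which already matches the noise part of \eqref{boundlem0}.

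The crux is the exact-difference term $\frac{3n^2}{4\tau^2}\EE\!\left[A^2\|\mathbf{e}\|_q^2\right]$: the naive Lipschitz bound $|A|\le 2M\tau\|\mathbf{e}\|_2=2M\tau$ only yields $3n^2M^2a_q^2$, a factor of order $n$ worse than the claimed $9nM^2a_q^2$. To recover the correct dependence I would first decouple $A$ from the norm by Cauchy--Schwarz, $\EE[A^2\|\mathbf{e}\|_q^2]\le\sqrt{\EE[A^4]}\,\sqrt{\EE[\|\mathbf{e}\|_q^4]}\le\sqrt{\EE[A^4]}\,a_q^2$ (this is exactly why $a_q^2$ is defined through the fourth moment of $\|\mathbf{e}\|_q$), and then prove the fourth-moment estimate $\EE[A^4]\le c\,\tau^4M^4/n^2$ for an absolute constant $c$. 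This is the main obstacle and the only place where more than bookkeeping is required: I would view $h(\mathbf{e})\eqdef\varphi(z+\tau\mathbf{e})$ as a $\tau M$-Lipschitz function on the Euclidean sphere $\mathcal{RS}^n_2(1)$, observe that $\mathbf{e}$ and $-\mathbf{e}$ are equidistributed so the means cancel and $A=(h(\mathbf{e})-\EE h)-(h(-\mathbf{e})-\EE h)$ is a difference of centred quantities, and invoke concentration of Lipschitz functions on the sphere (a Poincaré/Lévy-type inequality, whose fourth-moment form gives $\EE[(h(\mathbf{e})-\EE h)^4]\le c'\tau^4M^4/n^2$). It is precisely the cancellation of the $O(\tau M)$ means together with the $1/n$ concentration rate that turns the crude $n^2M^2$ into $nM^2$.

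Finally I would collect the three contributions: tracking the constant in the concentration bound, the $A$-term is at most $\frac{3n^2}{4\tau^2}\cdot\frac{\sqrt{c}\,\tau^2M^2}{n}\,a_q^2\le 9nM^2a_q^2$, and adding the noise part reproduces $3a_q^2\big(3nM^2+\frac{n^2(\sigma^2+\Delta^2)}{\tau^2}\big)$. The identity $a_q^2=\min\{2q-1,\,32\log n-8\}\,n^{2/q-1}$ for $n\ge 3$ I would handle separately as a standard moment computation for $\|\mathbf{e}\|_q$ on the sphere (the two branches corresponding to finite $q$ and to large $q$ / the $\|\cdot\|_\infty$ regime), independently of the argument above.
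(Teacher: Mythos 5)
Your proposal is correct and follows essentially the same route as the paper's proof: the same three-way splitting of the oracle difference via $(a+b+c)^2\le 3(a^2+b^2+c^2)$, the same treatment of the $\xi$- and $\delta$-terms through independence and boundedness, and the same key step for the exact-difference term, namely centering by $\mathbb{E}_{\mathbf{e}}[\varphi(z+\tau\mathbf{e})]$, exploiting the symmetry $\mathbf{e}\sim-\mathbf{e}$, and applying Cauchy--Schwarz against $\sqrt{\mathbb{E}[\|\mathbf{e}\|_q^4]}\le a_q^2$ together with the fourth-moment concentration bound for Lipschitz functions on the sphere (which is exactly Lemma 9 of Shamir, the lemma the paper invokes). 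The only cosmetic difference is the order of operations (you apply Cauchy--Schwarz to the full difference $A$ and then split $\mathbb{E}[A^4]$, while the paper splits into two centered squares before Cauchy--Schwarz), and your constants indeed recover the stated $9nM^2a_q^2$.
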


Next we define an important object for further theoretical discussion -- a smoothed version of the function $\varphi$ (see \cite{Nesterov,Shamir15}). 

\begin{definition}
Function $\hat{\varphi}(z)$ defines on set $\mathcal{Z}$ satisfies:
\begin{equation}
    \label{eq:smoothphi}
    \hat{\varphi}(z) = \mathbb{E}_{\mathbf{e}}\left[\varphi(z+ \tau \mathbf{e}) \right].
\end{equation}
\end{definition}

To define smoothed version correctly it is important that the function $\varphi$  is specified not only on an admissible set $\mathcal{Z}$, but in a certain neighborhood of it. This is due to the fact that for any point $z$ belonging to the set, the point $z+\tau e$ can be outside it.

\begin{lemma}[see Lemma 8 from \cite{Shamir15}]\label{lemma1}
    Let ${\varphi}(z)$ is $\mu$-strongly-convex-strongly-concave (convex-concave with $\mu = 0$) and $\mathbf{e}$ be from $\mathcal{RS}^n_2(1)$. Then function $\hat{\varphi}(z)$ is $\mu$-strongly-convex-strongly-concave  and under Assumption 2 satisfies:
    \begin{eqnarray}
        \label{lemma1_0}
        \sup_{z \in \mathcal{Z}} |\hat{\varphi}(z) - {\varphi}(z)| \leq \tau M. 
    \end{eqnarray}
\end{lemma}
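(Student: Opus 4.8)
The plan is to split the statement into its two independent assertions and handle each with an elementary principle: the preservation of the (strongly) convex--concave structure rests on the fact that an expectation of convex functions is convex, combined with the translation-invariance of the strong-convexity modulus, whereas the uniform bound \eqref{lemma1_0} is a one-line consequence of Jensen's inequality and $M$-Lipschitz continuity. Throughout I use the characterization that $g$ is $\mu$-strongly convex w.r.t. $\|\cdot\|_2$ if and only if $g - \tfrac{\mu}{2}\|\cdot\|_2^2$ is convex (and the analogous statement with $+\tfrac{\mu}{2}\|\cdot\|_2^2$ for concavity); this is the convenient form here, since in the non-smooth setting $\varphi$ itself need not be differentiable and the smoothing is precisely what regularizes it.

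For the structural part, fix $y$ and a realization $\mathbf{e} = (\mathbf{e}_x, \mathbf{e}_y)$, set $y' = y + \tau\mathbf{e}_y$, and write $g_{y'}(x) = \varphi(x, y') - \tfrac{\mu}{2}\|x\|_2^2$, which is convex by Assumption~3. Expanding the shifted quadratic gives
\[
\varphi(x+\tau\mathbf{e}_x,\, y') - \tfrac{\mu}{2}\|x\|_2^2 = g_{y'}(x+\tau\mathbf{e}_x) + \mu\tau\langle x, \mathbf{e}_x\rangle + \tfrac{\mu\tau^2}{2}\|\mathbf{e}_x\|_2^2,
\]
which is convex in $x$, being the sum of a translate of the convex function $g_{y'}$ and an affine term. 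Taking $\mathbb{E}_{\mathbf{e}}$ preserves convexity, so $x \mapsto \hat\varphi(x,y) - \tfrac{\mu}{2}\|x\|_2^2$ is convex, i.e. $\hat\varphi(\cdot,y)$ is $\mu$-strongly convex w.r.t. $\|\cdot\|_2$. Applying the identical argument to $-\varphi$ in the $y$-variable (using $\varphi(x',y)+\tfrac{\mu}{2}\|y\|_2^2$ concave) shows $\hat\varphi(x,\cdot)$ is $\mu$-strongly concave. The convex--concave claim is the specialization $\mu=0$, where only "expectation of convex is convex" is needed.

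For the approximation bound, by Jensen's inequality followed by Assumption~2 ($M$-Lipschitz continuity),
\[
|\hat\varphi(z) - \varphi(z)| \le \mathbb{E}_{\mathbf{e}}\bigl[|\varphi(z+\tau\mathbf{e}) - \varphi(z)|\bigr] \le \mathbb{E}_{\mathbf{e}}\bigl[M\tau\|\mathbf{e}\|_2\bigr] = M\tau,
\]
where the final equality uses $\|\mathbf{e}\|_2 = 1$ because $\mathbf{e}\in\mathcal{RS}^n_2(1)$. Taking the supremum over $z\in\mathcal{Z}$ yields \eqref{lemma1_0}.

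I expect the genuine care to lie not in a deep estimate but in well-definedness and in one structural observation. Since $z+\tau\mathbf{e}$ may leave $\mathcal{Z}$, the definition \eqref{eq:smoothphi} and the Lipschitz estimate must be read on a $\tau$-neighbourhood of $\mathcal{Z}$ on which $\varphi$ is assumed defined and $M$-Lipschitz; boundedness of $\varphi$ on that compact neighbourhood makes $\varphi(z+\tau\mathbf{e})$ integrable and legitimizes pulling $\mathbb{E}_{\mathbf{e}}$ through the inequalities above. The only non-cosmetic point is the strong-convexity step, where one must notice that shifting the argument by the constant $\tau\mathbf{e}_x$ contributes only an affine term and therefore leaves the modulus $\mu$ unchanged, so that averaging over $\mathbf{e}$ cannot degrade it.
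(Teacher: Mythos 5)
Your proof of the bound \eqref{lemma1_0} is correct and is essentially identical to the paper's own argument: write $\hat{\varphi}(z) - \varphi(z) = \mathbb{E}_{\mathbf{e}}[\varphi(z+\tau\mathbf{e}) - \varphi(z)]$, pass the absolute value inside the expectation, and apply $M$-Lipschitz continuity together with $\|\mathbf{e}\|_2 = 1$. The one genuine difference is that you also prove the structural claim (preservation of $\mu$-strong convexity--strong concavity under smoothing), whereas the paper does not prove this at all --- it simply inherits it from the cited Lemma 8 of Shamir's paper. Your argument for that part is sound: for a fixed realization of $\mathbf{e}$, the shift $x \mapsto x + \tau\mathbf{e}_x$ perturbs $\varphi(\cdot, y+\tau\mathbf{e}_y) - \tfrac{\mu}{2}\|\cdot\|_2^2$ only by an affine term, so convexity of the shifted, recentered function survives, and taking $\mathbb{E}_{\mathbf{e}}$ preserves it; the concave side is symmetric. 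This makes your write-up more self-contained than the paper's, at the minor cost of relying on the characterization of strong convexity via convexity of $g - \tfrac{\mu}{2}\|\cdot\|_2^2$ rather than the gradient-based definition the paper states (a harmless and indeed preferable substitution here, since $\hat{\varphi}$ is exactly the object whose differentiability one would otherwise have to discuss). Your remark about well-definedness on a $\tau$-neighbourhood of $\mathcal{Z}$ matches the paper's own caveat following the definition of $\hat{\varphi}$.
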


\begin{lemma}[see Lemma 10 from \cite{Shamir15} and Lemma 2 from \cite{Beznosikov}]\label{lemmasham}
    Under Assumption 4 it holds that 
    \begin{eqnarray}
    \label{temp985}
    \tilde \nabla \hat{\varphi}(z) = \mathbb{E}_{\mathbf{e}}\left[\frac{n\left(\varphi(z + \tau \mathbf{e}) -  \varphi(z - \tau \mathbf{e})\right)}{2\tau}\left(
    \begin{array}{c}
    \mathbf{e}_x\\
    -\mathbf{e}_y \\
    \end{array}\right)\right],\\
    \|\mathbb{E}_{\mathbf{e},\xi}[ g(z, \mathbf{e}, \tau, \xi^{\pm}) ] - \tilde \nabla \hat{\varphi}(z)\|_q \leq \frac{\Delta n a_q}{\tau}\label{temp986}\hspace{2cm}.
    \end{eqnarray}
\end{lemma}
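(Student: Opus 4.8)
The statement contains two claims: the exact gradient identity \eqref{temp985} for the smoothed function, and the bias bound \eqref{temp986} for the estimator \eqref{eq:PrSt3}. I would treat $\varphi$ as a function of the joint variable $z = (x,y) \in \R^{n}$, $n = n_x + n_y$, and prove them in this order, since the second relies on the first.

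For \eqref{temp985}, the plan is to first derive the classical one-sided smoothing identity for the full gradient $\nabla_z \hat\varphi$ and then read off the block structure. Following the cited smoothing lemma, I interpret $\hat\varphi$ in \eqref{eq:smoothphi} as the average of $\varphi$ over the Euclidean ball $\{z + \tau u : \|u\|_2 \le 1\}$, which is the version for which the identity holds. Differentiating under the integral and converting the $z$-derivative into a $u$-derivative through $\nabla_z \varphi(z + \tau u) = \tfrac{1}{\tau}\nabla_u \varphi(z+\tau u)$, I apply the divergence theorem to turn the ball integral into a surface integral over the unit sphere; using $\mathrm{vol}(S^{n-1})/\mathrm{vol}(B^n) = n$ this gives $\nabla_z \hat\varphi(z) = \tfrac{n}{\tau}\EE_{\mathbf e}[\varphi(z + \tau \mathbf e)\,\mathbf e]$ with $\mathbf e$ uniform on the sphere. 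Because $\mathbf e$ and $-\mathbf e$ are identically distributed, substituting $\mathbf e \mapsto -\mathbf e$ and averaging symmetrizes the finite difference and yields $\nabla_z \hat\varphi(z) = \tfrac{n}{2\tau}\EE_{\mathbf e}[(\varphi(z + \tau\mathbf e) - \varphi(z - \tau\mathbf e))\,\mathbf e]$. Splitting $\mathbf e = (\mathbf e_x, \mathbf e_y)$ identifies $\nabla_x\hat\varphi$ and $\nabla_y\hat\varphi$ with the two blocks, and flipping the sign on the $y$-block according to the definition $\tnabla\hat\varphi = (\nabla_x\hat\varphi, -\nabla_y\hat\varphi)$ gives precisely \eqref{temp985}.

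For \eqref{temp986}, I would substitute the oracle model \eqref{PrSt1} into \eqref{eq:PrSt3}, writing $\tilde\varphi(z \pm \tau\mathbf e, \xi^\pm) = \varphi(z \pm \tau\mathbf e) + \xi^\pm + \delta(z \pm \tau \mathbf e)$, so that $g$ becomes the vector $(\mathbf e_x, -\mathbf e_y)$ multiplied by $\tfrac{n}{2\tau}$ times the sum of three finite differences: the pure-$\varphi$ difference, the noise difference $\xi^+ - \xi^-$, and the bias difference $\delta(z+\tau\mathbf e) - \delta(z - \tau\mathbf e)$. Taking $\EE_{\mathbf e, \xi}$, the $\varphi$-part equals $\tnabla\hat\varphi(z)$ by \eqref{temp985}, and the noise part vanishes: by Assumption \ref{noise_restrictions}, $\EE\xi^\pm = 0$ and $\xi^\pm$ is independent of $\mathbf e$, so $\EE[\xi^\pm \mathbf e] = \EE[\xi^\pm]\EE[\mathbf e] = 0$. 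Hence the left-hand side of \eqref{temp986} collapses to the bias term $\tfrac{n}{2\tau}\EE_{\mathbf e}[(\delta(z+\tau\mathbf e) - \delta(z-\tau\mathbf e))(\mathbf e_x, -\mathbf e_y)]$. To bound its $\|\cdot\|_q$ norm I pass the norm inside the expectation by convexity, use that flipping signs leaves the $\ell_q$ norm unchanged so $\|(\mathbf e_x, -\mathbf e_y)\|_q = \|\mathbf e\|_q$, and apply $|\delta(z + \tau\mathbf e) - \delta(z - \tau\mathbf e)| \le 2\Delta$ from $|\delta| \le \Delta$. This yields the upper bound $\tfrac{n\Delta}{\tau}\EE_{\mathbf e}[\|\mathbf e\|_q]$, and a final Jensen step $\EE[\|\mathbf e\|_q] \le \sqrt{\EE[\|\mathbf e\|_q^2]} \le a_q$ — the quantity controlled in Lemma \ref{beznos} — produces the claimed $\tfrac{\Delta n a_q}{\tau}$.

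The only genuinely delicate step is the gradient identity \eqref{temp985}: it rests on the divergence-theorem computation and, crucially, on the standard but easily mis-stated point that $\hat\varphi$ must be the average of $\varphi$ over the \emph{ball} for the identity to hold, even though the estimator samples $\mathbf e$ on the \emph{sphere}. Once that identity is in hand, the bias bound \eqref{temp986} is a routine expansion followed by the triangle inequality and two applications of Jensen's inequality.
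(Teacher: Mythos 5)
Your proof is correct and follows essentially the same route as the paper: the gradient identity \eqref{temp985} is exactly the Stokes/divergence-theorem argument that the paper outsources to Shamir's Lemma 10, and your treatment of \eqref{temp986} --- substitute the oracle model, kill the noise term via unbiasedness and independence of $\xi^{\pm}$ from $\mathbf{e}$, and bound the remaining $\delta$-difference using $|\delta|\leq\Delta$ together with $\mathbb{E}[\|\mathbf{e}\|_q]\leq a_q$ --- is precisely the paper's own computation. Your observation that \eqref{temp985} requires interpreting $\hat{\varphi}$ as the \emph{ball} average even though the estimator samples $\mathbf{e}$ on the sphere is a careful touch that the paper (following the cited literature) glosses over, but it does not constitute a different approach.
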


Now we are ready to present the main results of this section. Let begin with \textbf{convex-concave} case (Assumption 3 with $\mu=0$)
\begin{theorem} \label{th_main}
Let problem \eqref{problem} with function $\varphi(x,y)$ be solved using Algorithm \ref{alg1} with the oracle \eqref{eq:PrSt3}. Assume, that the set $\mathcal{Z}$, the convex-concave function  $\varphi(x,y)$ and its inexact modification $\widetilde{\varphi}(x,y)$ satisfy Assumptions 1, 2, 4. Denote by $N$ the number of iterations and $\gamma_k = \gamma =const$. Then the rate of convergence is given by the following expression:
\begin{eqnarray*}
    \mathbb{E}\left[\varepsilon_{sad}(\bar z_{N})\right] &\leq&
    \frac{3\Omega^2}{2 \gamma (N+1)} + \frac{3\gamma M^2_{all}}{2} + \frac{\Delta \Omega n a_q}{\tau} + 2\tau M.
\end{eqnarray*} 
$\Omega$ is a diameter of $\mathcal{Z}$, $M^2_{all} = 3\left(3n M^2 +\frac{n^2(\sigma^2 + \Delta^2)}{\tau^2}\right)a^2_q$ and 
\begin{equation}
    \label{sad}
    \varepsilon_{sad}(\bar z_{N}) = \max_{y' \in \mathcal{Y}} \varphi(\bar x_{N}, y') - \min_{x' \in \mathcal{X}} \varphi(x', \bar y_{N}).
\end{equation}
\end{theorem}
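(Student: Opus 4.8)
The plan is to carry out a Mirror Descent analysis on the \emph{smoothed} surrogate $\hat{\varphi}$ from \eqref{eq:smoothphi} rather than on $\varphi$ itself, since by Lemma~\ref{lemmasham} the oracle $g_k \eqdef g(z_k,\mathbf{e}_k,\tau,\xi^{\pm}_k)$ estimates $\tilde{\nabla}\hat{\varphi}(z_k)$ up to a controlled bias, and then to transfer the resulting guarantee back to $\varphi$ through the uniform smoothing error of Lemma~\ref{lemma1}. The first step is the one-step prox inequality: because $z_{k+1}=\text{prox}_{z_k}(\gamma g_k)$ and $d$ is $1$-strongly convex, the three-point identity for $V_z(\cdot)$ together with Fenchel--Young gives, for \emph{every} $u\in\mathcal{Z}$,
\begin{equation*}
\gamma\langle g_k, z_k-u\rangle \;\le\; V_{z_k}(u)-V_{z_{k+1}}(u)+\tfrac{\gamma^2}{2}\|g_k\|_q^2 .
\end{equation*}
Summing over $k=0,\dots,N$ telescopes the Bregman terms; using $V_{z_0}(u)\le \Omega^2/2$ and $V_{z_{N+1}}(u)\ge 0$ leaves $\gamma\sum_{k}\langle g_k,z_k-u\rangle \le \Omega^2/2+\tfrac{\gamma^2}{2}\sum_k\|g_k\|_q^2$, where $\mathbb{E}\|g_k\|_q^2\le M^2_{all}$ by Lemma~\ref{beznos}.

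The second step turns the linear term into a saddle gap for $\hat{\varphi}$. Let $\mathbb{E}_k$ denote the expectation conditioned on the history before iteration $k$, so that $\mathbb{E}_k g_k$ is the quantity bounded in \eqref{temp986}. Decomposing $g_k=\tilde{\nabla}\hat{\varphi}(z_k)+\bigl(\mathbb{E}_k g_k-\tilde{\nabla}\hat{\varphi}(z_k)\bigr)+\bigl(g_k-\mathbb{E}_k g_k\bigr)$ and using that $\hat{\varphi}$ is convex--concave (Lemma~\ref{lemma1}), the first block satisfies the monotonicity bound $\langle\tilde{\nabla}\hat{\varphi}(z_k),z_k-u\rangle\ge \hat{\varphi}(x_k,y_u)-\hat{\varphi}(x_u,y_k)$ for $u=(x_u,y_u)$, obtained by adding the (sub)gradient inequalities for the convex slice $\hat{\varphi}(\cdot,y_k)$ and the concave slice $\hat{\varphi}(x_k,\cdot)$. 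The second (bias) block is handled by Lemma~\ref{lemmasham} and $\|z_k-u\|\le\Omega$: its inner product with $z_k-u$ is at most $\tfrac{\Delta n a_q}{\tau}\Omega$ in absolute value. After dividing by $\gamma(N+1)$ these two contributions already produce the terms $\Omega^2/(2\gamma(N+1))$, $\gamma M^2_{all}/2$, and $\Delta\Omega n a_q/\tau$ of the claimed bound.

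I expect the remaining third block, the stochastic term $\sum_k\langle g_k-\mathbb{E}_k g_k, z_k-u\rangle$, to be the main obstacle. The difficulty is that the comparator $u$ realizing the saddle gap of $\bar z_N$ depends on the whole random trajectory, so after taking the supremum over $u$ the inner product with the martingale increment $g_k-\mathbb{E}_k g_k$ is no longer mean zero. The plan is to resolve this with the standard auxiliary (``ghost'') sequence: one runs a second Mirror Descent recursion $v_{k+1}=\text{prox}_{v_k}\bigl(\gamma(\mathbb{E}_k g_k-g_k)\bigr)$ with $v_0=z_0$, to which the same one-step inequality applies for every $u$. Rearranging it isolates $\sum_k\langle g_k-\mathbb{E}_k g_k,u\rangle$ in terms of a telescoping Bregman part, a sum of squared norms $\|g_k-\mathbb{E}_k g_k\|_q^2$, and $\sum_k\langle g_k-\mathbb{E}_k g_k,v_k\rangle$. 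Since $v_k$ and $z_k$ are measurable with respect to the history preceding the $k$-th sample, both $\mathbb{E}\langle g_k-\mathbb{E}_k g_k,v_k\rangle$ and $\mathbb{E}\langle g_k-\mathbb{E}_k g_k,z_k\rangle$ vanish, while $\mathbb{E}\|g_k-\mathbb{E}_k g_k\|_q^2$ is again a constant multiple of $M^2_{all}$ by the triangle inequality and Lemma~\ref{beznos}. Adding this bound to the one from the first step contributes a second Bregman part and a second sum of squared norms; collecting all four pieces and bounding each second moment through $M^2_{all}$ gives the stated constants $3\Omega^2/(2\gamma(N+1))$ and $3\gamma M^2_{all}/2$.

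It then remains to pass from the iterates to their average and from $\hat{\varphi}$ to $\varphi$. Jensen's inequality applied to the convex--concave $\hat{\varphi}$ gives $\tfrac{1}{N+1}\sum_k\hat{\varphi}(x_k,y_u)\ge \hat{\varphi}(\bar x_N,y_u)$ and $\tfrac{1}{N+1}\sum_k\hat{\varphi}(x_u,y_k)\le \hat{\varphi}(x_u,\bar y_N)$; choosing $u$ to realize $\max_{y'}\hat{\varphi}(\bar x_N,y')-\min_{x'}\hat{\varphi}(x',\bar y_N)$ (legitimate because every step above holds for all $u$) bounds the expected saddle gap of $\bar z_N$ measured with $\hat{\varphi}$. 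Finally, the uniform estimate $\sup_{z}|\hat{\varphi}(z)-{\varphi}(z)|\le\tau M$ of Lemma~\ref{lemma1}, applied once inside the outer $\max_{y'}$ and once inside the $\min_{x'}$ in \eqref{sad}, replaces $\hat{\varphi}$ by $\varphi$ at the cost of the additive $2\tau M$, completing the estimate.
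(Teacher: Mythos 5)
Your plan is structurally identical to the paper's proof: the same summed prox inequality, the same reduction of $\varepsilon_{sad}(\bar z_N)$ to $\max_{u\in\mathcal{Z}}\sum_k\langle\tilde\nabla\hat\varphi(z_k),z_k-u\rangle$ via Lemma~\ref{lemma1}, Jensen's inequality, and convexity--concavity of $\hat\varphi$, and the same auxiliary (``ghost'') prox sequence to handle the trajectory-dependent comparator, which is exactly the paper's Lemma~\ref{lemma3}. The only organizational difference is where the bias is extracted: you split $g_k$ into bias plus martingale up front and bound the bias pointwise by $\frac{\Delta n a_q}{\tau}\Omega$ via H\"older (valid, since the bias vector is measurable w.r.t.\ the history), whereas the paper feeds the full error $\Delta_k=g_k-\tilde\nabla\hat\varphi(z_k)$ into the ghost sequence and extracts the bias inside Lemma~\ref{lemma3} through the term $\langle\mathbb{E}_{\mathbf{e}_k,\xi_k}[\Delta_k],v_k-z_k\rangle$; the two organizations are equivalent.

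The one concrete issue is your final claim that ``collecting all four pieces'' yields the constants $\frac{3\Omega^2}{2\gamma(N+1)}$ and $\frac{3\gamma M^2_{all}}{2}$: with the unscaled ghost step $v_{k+1}=\text{prox}_{v_k}\bigl(\gamma(\mathbb{E}_k g_k - g_k)\bigr)$ it does not. For a general dual norm $\|\cdot\|_q$ the martingale part can only be bounded as $\mathbb{E}\|g_k-\mathbb{E}_k g_k\|_q^2\le 4M^2_{all}$ (triangle inequality plus Jensen), so the ghost recursion contributes $\frac{\Omega^2}{2}+2\gamma^2M^2_{all}(N+1)$; together with the main prox sum $\frac{\Omega^2}{2}+\frac{\gamma^2}{2}M^2_{all}(N+1)$ and after dividing by $\gamma(N+1)$ you obtain $\frac{\Omega^2}{\gamma(N+1)}+\frac{5\gamma M^2_{all}}{2}$, which neither matches nor implies the stated bound. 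The fix is the paper's: run the ghost sequence with a scaled step, $v_{k+1}=\text{prox}_{v_k}\bigl(\rho\gamma(\mathbb{E}_k g_k-g_k)\bigr)$, which gives the uniform-in-$u$ bound $\frac{\Omega^2}{2\rho}+\frac{\rho\gamma^2}{2}\sum_k\|g_k-\mathbb{E}_k g_k\|_q^2$, and take $\rho=\nicefrac{1}{2}$; then the factor $4$ is absorbed and all pieces add up exactly to $\frac{3\Omega^2}{2\gamma(N+1)}+\frac{3\gamma M^2_{all}}{2}+\frac{\Delta\Omega n a_q}{\tau}+2\tau M$.
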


Let analyze the results:
\begin{corollary} Under the assumptions of the Theorem 1 let $\varepsilon$ be accuracy of the solution of the problem \eqref{problem} obtained using Algorithm \ref{alg1}. Assume that
\begin{eqnarray}
    \label{temp1209}
    \gamma = \Theta \left(\frac{\Omega}{n^{\frac{1}{4} + \frac{1}{2q}} M N^{\frac{3}{4}}}\right), \quad \tau = \Theta \left( \frac{\sigma}{M} \cdot \frac{n^{\frac{1}{4} + \frac{1}{2q}}}{N^{\frac{1}{4}}}\right),\quad \Delta = \mathcal{O} \left(\frac{\varepsilon \tau}{\Omega n a_q}\right),
\end{eqnarray}
then the number of iterations to find $\varepsilon$-solution
\begin{eqnarray*}
    N = \mathcal{O} \left( \frac{n^{1 + \frac{2}{q}}}{\varepsilon^4} \left[C^4(n,q) M^4 \Omega^4 +  \sigma^4\right]\right),
\end{eqnarray*}
or with
\begin{eqnarray*}
    \gamma = \Theta \left(\frac{\Omega}{n^{\frac{1}{q}} M N^{\frac{3}{4}}}\right), \quad \tau = \Theta \left( \frac{\sigma}{M} \cdot \frac{n^{\frac{1}{2}}}{N^{\frac{1}{4}}}\right),\quad \Delta = \mathcal{O} \left(\frac{\varepsilon \tau}{\Omega n a_q}\right),
\end{eqnarray*}
\begin{eqnarray*}
    N = \mathcal{O} \left( \frac{n^{\frac{4}{q}} C^4(n,q)}{\varepsilon^4}M^4 \Omega^4 +  \frac{n^{2}}{\varepsilon^4} \sigma^4  \right),
\end{eqnarray*}
where $C(n,q) \eqdef \min\{2q - 1, 32 \log n - 8\}$.
\end{corollary}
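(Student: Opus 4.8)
The plan is to substitute the prescribed values of $\gamma$, $\tau$, and $\Delta$ into the four-term bound of Theorem~\ref{th_main} and then solve the inequality $\mathbb{E}[\varepsilon_{sad}(\bar z_N)] \leq \varepsilon$ for $N$. I would require each of the four summands to be at most a fixed fraction (say $\varepsilon/4$) of the target accuracy, read off the lower bound on $N$ forced by each term in isolation, and take the maximum; the dominant requirement is exactly the claimed complexity.

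I would treat the terms from easiest to hardest. The biased-noise term $\frac{\Delta\Omega n a_q}{\tau}$ is controlled by construction: inserting $\Delta = \mathcal{O}(\frac{\varepsilon\tau}{\Omega n a_q})$ makes it $\mathcal{O}(\varepsilon)$ at once and imposes no constraint on $N$. The smoothing-bias term $2\tau M$, after substituting $\tau = \Theta(\frac{\sigma}{M}\cdot\frac{n^{1/4+1/(2q)}}{N^{1/4}})$ for the first parameter set, becomes $\Theta(\sigma\, n^{1/4+1/(2q)}N^{-1/4})$, so requiring it to be $\leq\varepsilon$ forces $N\gtrsim \sigma^4 n^{1+2/q}/\varepsilon^4$; this supplies the $\sigma^4$ contribution to the stated count.

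The main work lies in the two Mirror Descent terms. First I would simplify $M_{all}^2=3(3nM^2+\frac{n^2(\sigma^2+\Delta^2)}{\tau^2})a_q^2$ using the chosen $\tau$: with $\tau^2=\Theta(\frac{\sigma^2}{M^2}n^{1/2+1/q}N^{-1/2})$ the stochastic piece $\frac{n^2\sigma^2}{\tau^2}$ scales like $M^2 n^{3/2-1/q}N^{1/2}$, while the $\Delta$-choice makes $\frac{n^2\Delta^2}{\tau^2}$ of order $\varepsilon^2/(\Omega^2 a_q^2)$, hence dominated. For large $N$ the $N^{1/2}$ piece overtakes $3nM^2$, so $M_{all}^2=\Theta(a_q^2 M^2 n^{3/2-1/q}N^{1/2})$. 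Substituting $a_q^2=\Theta(C(n,q)n^{2/q-1})$ and $\gamma=\Theta(\frac{\Omega}{n^{1/4+1/(2q)}MN^{3/4}})$ into $\frac{3\gamma M_{all}^2}{2}$ collapses it to $\Theta(\Omega M\, C(n,q)\, n^{1/4+1/(2q)}N^{-1/4})$, and an identical computation gives the step-size term $\frac{3\Omega^2}{2\gamma(N+1)}=\Theta(\Omega M\, n^{1/4+1/(2q)}N^{-1/4})$, the same up to the factor $C(n,q)\geq 1$. Demanding the larger of the two be $\leq\varepsilon$ yields $N\gtrsim C^4(n,q)M^4\Omega^4 n^{1+2/q}/\varepsilon^4$; combining with the $\sigma^4$ constraint produces the first claimed bound. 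The second parameter set is obtained by repeating the same substitutions with $\gamma=\Theta(\frac{\Omega}{n^{1/q}MN^{3/4}})$ and $\tau=\Theta(\frac{\sigma}{M}n^{1/2}N^{-1/4})$, which rebalances the powers of $n$ and gives $N=\mathcal{O}(n^{4/q}C^4(n,q)M^4\Omega^4\varepsilon^{-4}+n^2\sigma^4\varepsilon^{-4})$.

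The delicate point, and the step I expect to require the most care, is the bookkeeping of the exponents of $n$ through the chain $a_q^2\mapsto n^{2/q-1}$, $\tau^{-2}\mapsto N^{1/2}$, $\gamma\mapsto N^{-3/4}$, and the verification that both Mirror Descent terms genuinely reduce to the same $N^{-1/4}$ rate, so that inverting (a fourth power) to solve for $N$ is legitimate. I would also explicitly confirm that the $\Delta^2/\tau^2$ contribution to $M_{all}^2$ remains dominated by the $\sigma^2/\tau^2$ contribution, justifying the simplification used in the step-size term.
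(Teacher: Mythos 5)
Your proposal is correct and follows essentially the same route the paper intends (the paper gives no separate written proof of this corollary beyond Theorem 1): substitute the prescribed $\gamma$, $\tau$, $\Delta$ into the four-term bound, note the bias term $\frac{\Delta\Omega n a_q}{\tau}$ is $\mathcal{O}(\varepsilon)$ by construction, reduce the two Mirror Descent terms and the smoothing term to $N^{-1/4}$ rates, and invert. Your exponent bookkeeping checks out for both parameter sets, including the correct observations that the $\Delta^2/\tau^2$ contribution to $M^2_{all}$ is dominated and that the step-size term matches the variance term up to the factor $C(n,q)$.
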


Analyse separately cases with $p = 1$ and $p = 2$.
\begin{table}[H]
    \centering
    \begin{tabular}{ |c | c | c |  }
    \hline
    $p$, ($1\leqslant p \leqslant 2$) & $q$,  ($2\leqslant q \leqslant \infty$) &   $N$, Number of iterations\\ \hline
    $p = 2$& $q = 2$ & $\mathcal{O}\left(n^{2} \varepsilon^{-4}\right)$\\\hline
    $p = 1$& $q = \infty$  & $\mathcal{O}\left(n \log^4 n \cdot \varepsilon^{-4}\right)$\\ \hline
\end{tabular}
    \caption{Summary of  convergence estimation for non-smooth case: $p = 2$ and $p = 1$.}
    \label{summary_estim}
\end{table}

Next we consider \textbf{$\mu$-strongly-convex-strongly-concave}.  Here we work with $V_z(w) = \frac{1}{2}\|z-w\|^2_2$.

\begin{theorem} \label{th_main1}
Let problem \eqref{problem} with function $\varphi(x,y)$ be solved using Algorithm \ref{alg1} with $V_z(w) = \frac{1}{2}\|z-w\|^2_2$ and the oracle \eqref{eq:PrSt3}. Assume, that the set $\mathcal{Z}$, the function  $\varphi(x,y)$ and its inexact modification $\widetilde{\varphi}(x,y)$ satisfy Assumptions 1, 2, 3, 4. Denote by $N$ the number of iterations and $\gamma_k = \frac{1}{\mu k}$. Then the rate of convergence is given by the following expression:
\begin{eqnarray*}
\EE\left[\varphi(\bar x_N, y^*) - \varphi(x^*, \bar y_N)\right]
    &\leq& \frac{M^2_{all} \log  (N+1)}{2 \mu (N+1)} + \frac{\Delta n\Omega}{\tau} + 2\tau M
\end{eqnarray*}
$\Omega$ is a diameter of $\mathcal{Z}$, $M^2_{all} = 3\left(3n M^2 +\frac{n^2(\sigma^2 + \Delta^2)}{\tau^2}\right)$.
\end{theorem}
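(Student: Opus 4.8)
The plan is to run the standard Mirror Descent analysis on the smoothed surrogate $\hat\varphi$ defined in \eqref{eq:smoothphi}, using the three facts already assembled in this section. In the Euclidean case $q=2$ the random direction lies on the unit sphere, so $\|\mathbf e\|_2=1$ and the factor $a_q^2$ of Lemma \ref{beznos} can be taken equal to $1$, giving $\EE\|g\|_2^2\le M^2_{all}$; likewise the bias bound of Lemma \ref{lemmasham} becomes $\|\EE_{\mathbf e,\xi}[g(z,\mathbf e,\tau,\xi^\pm)]-\tilde\nabla\hat\varphi(z)\|_2\le \Delta n/\tau$; and by Lemma \ref{lemma1} the surrogate $\hat\varphi$ is again $\mu$-strongly-convex--strongly-concave with $\sup_z|\hat\varphi(z)-\varphi(z)|\le\tau M$. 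First I would prove the gap bound for $\hat\varphi$ and only at the very end transfer it to $\varphi$, which produces the additive $2\tau M$ term.

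For a single step, writing $g_k=g(z_k,\mathbf e_k,\tau,\xi^\pm_k)$ and $z^*=(x^*,y^*)$, the prox update $z_{k+1}=\mathrm{prox}_{z_k}(\gamma_k g_k)$ together with $1$-strong convexity of $V_{z_k}(\cdot)$ yields the textbook Mirror Descent inequality $\gamma_k\langle g_k,z_k-z^*\rangle\le V_{z_k}(z^*)-V_{z_{k+1}}(z^*)+\tfrac{\gamma_k^2}{2}\|g_k\|_2^2$. Taking the conditional expectation $\EE_k$ (here $z_k$ is measurable w.r.t.\ the past while $\mathbf e_k,\xi_k^\pm$ are fresh), I would replace $\EE_k g_k$ by $\tilde\nabla\hat\varphi(z_k)$ at the cost of a bias term bounded via Cauchy--Schwarz by $\tfrac{\Delta n}{\tau}\Omega$. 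The essential structural step is the strong-monotonicity estimate for the saddle operator: splitting $\langle\tilde\nabla\hat\varphi(z_k),z_k-z^*\rangle$ into its $x$- and $y$-blocks and invoking $\mu$-strong convexity in $x$ and $\mu$-strong concavity in $y$ of $\hat\varphi$ gives
\begin{equation*}
\langle\tilde\nabla\hat\varphi(z_k),z_k-z^*\rangle\ \ge\ \hat\varphi(x_k,y^*)-\hat\varphi(x^*,y_k)+\tfrac{\mu}{2}\|z_k-z^*\|_2^2 .
\end{equation*}
Substituting this, recalling $V_{z_k}(z^*)=\tfrac12\|z_k-z^*\|_2^2$ so that the strongly-convex term reads $\gamma_k\mu V_{z_k}(z^*)$, and dividing by $\gamma_k$ after taking full expectation produces a recursion of the form $\EE[\hat\varphi(x_k,y^*)-\hat\varphi(x^*,y_k)]\le \tfrac{1-\gamma_k\mu}{\gamma_k}\EE V_{z_k}(z^*)-\tfrac1{\gamma_k}\EE V_{z_{k+1}}(z^*)+\tfrac{\gamma_k}{2}M^2_{all}+\tfrac{\Delta n\Omega}{\tau}$.

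With $\gamma_k=\tfrac1{\mu k}$ one has $\tfrac{1-\gamma_k\mu}{\gamma_k}=\mu(k-1)$ and $\tfrac1{\gamma_k}=\mu k$, so the $V$-terms telescope exactly (the leftover $-\mu(N+1)\EE V_{z_{N+2}}(z^*)\le 0$ is discarded), the stochastic term collapses to $\tfrac{M^2_{all}}{2\mu}\sum_k\tfrac1k\le\tfrac{M^2_{all}}{2\mu}\log(N+1)$, and the bias contributes $(N+1)\tfrac{\Delta n\Omega}{\tau}$. Dividing by $N+1$ and applying Jensen — convexity of $\hat\varphi(\cdot,y^*)$ and concavity of $\hat\varphi(x^*,\cdot)$ — to pass to the averaged iterate $\bar z_N$ bounds $\EE[\hat\varphi(\bar x_N,y^*)-\hat\varphi(x^*,\bar y_N)]$ by the first two claimed terms; the final transfer to $\varphi$ via $\sup_z|\hat\varphi(z)-\varphi(z)|\le\tau M$ adds $2\tau M$, matching the statement.

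The main obstacle is the bias inherent to one-point feedback: unlike the two-point scheme, $g_k$ estimates $\tilde\nabla\hat\varphi(z_k)$ only up to the deterministic error $\Delta n/\tau$, and this contribution does not decay with $N$, so one must carefully carry it as a controllable additive $\tfrac{\Delta n\Omega}{\tau}$ (ultimately neutralized by the prescribed scaling of $\Delta$). The second delicate point is checking that $\gamma_k=\tfrac1{\mu k}$ makes the strongly-convex term $\tfrac{\mu}{2}\|z_k-z^*\|_2^2$ absorb \emph{exactly} into the telescoping of the Bregman distances, so that the sole logarithmic price relative to the convex-concave analysis is the single $\log(N+1)$ arising from $\sum_k 1/k$.
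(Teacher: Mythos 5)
Your proposal is correct and follows essentially the same route as the paper's proof: the Euclidean Mirror Descent one-step inequality with the one-point estimator, the bias bound $\Delta n/\tau$ from Lemma \ref{lemmasham} with $a_q=1$, the strong-monotonicity splitting via $\mu$-strong convexity--concavity of $\hat\varphi$, exact telescoping under $\gamma_k = \nicefrac{1}{(\mu k)}$, the harmonic-sum logarithm, and Jensen. The only (immaterial) difference is bookkeeping: you carry the argument entirely in $\hat\varphi$ and pay the $2\tau M$ approximation cost once at the end, whereas the paper converts to $\varphi$ at each iteration via Lemma \ref{lemma1}.
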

From here one can get 
\begin{corollary} Under the assumptions of the Theorem 2 let $\varepsilon$ be accuracy of the solution of the problem \eqref{problem} obtained using Algorithm \ref{alg1}. Assume that
\begin{eqnarray*}
    \tau = \Theta \left( \sqrt[3]{\frac{\sigma^2}{\mu M}} \cdot \sqrt[3]{\frac{n^2}{N}}\right),\quad \Delta = \mathcal{O} \left(\frac{\varepsilon \tau}{\Omega n}\right),
\end{eqnarray*}
then the number of iterations to find $\varepsilon$-solution
\begin{eqnarray*}
    N = \mathcal{\tilde O} \left( \frac{ n M^2 }{\mu\varepsilon}  + \frac{M^2 n^2 \sigma^2}{\mu \varepsilon^3}\right).
\end{eqnarray*}
\end{corollary}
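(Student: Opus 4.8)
The plan is to substitute the prescribed $\tau$ and $\Delta$ into the three-term bound of Theorem 2 and force each term to be $\mathcal{O}(\varepsilon)$, reading off the resulting lower bounds on $N$. Recall the bound
\[
\EE\left[\varphi(\bar x_N, y^*) - \varphi(x^*, \bar y_N)\right] \leq \underbrace{\frac{M^2_{all}\log(N+1)}{2\mu(N+1)}}_{(I)} + \underbrace{\frac{\Delta n\Omega}{\tau}}_{(II)} + \underbrace{2\tau M}_{(III)},
\]
with $M^2_{all} = 3\left(3nM^2 + \frac{n^2(\sigma^2+\Delta^2)}{\tau^2}\right)$. First I would dispose of the bias term $(II)$: plugging $\Delta = \mathcal{O}(\varepsilon\tau/(\Omega n))$ gives $(II) = \mathcal{O}(\varepsilon)$ independently of $N$. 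The same choice makes $\Delta$ small relative to $\sigma$ (since the $\tau$ below is small and $\varepsilon \le \sigma$ in the regime of interest), so the $\Delta^2$ contribution to $M^2_{all}$ is dominated by $\sigma^2$ and may be dropped; effectively $M^2_{all} = \Theta\left(nM^2 + \frac{n^2\sigma^2}{\tau^2}\right)$.

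Next I would handle the smoothing term $(III)$. The motivation for the prescribed $\tau$ is that it balances $(III)$ against the variance part of $(I)$: setting $\tau M \asymp \frac{n^2\sigma^2}{\tau^2\mu N}$ yields exactly $\tau = \Theta\left((n^2\sigma^2/(\mu M N))^{1/3}\right)$, the stated value. With this $\tau$, requiring $2\tau M = \mathcal{O}(\varepsilon)$ is equivalent to $\tau^3 M^3 \lesssim \varepsilon^3$, i.e. $\frac{n^2\sigma^2 M^2}{\mu N}\lesssim \varepsilon^3$, which already produces the second term $N \gtrsim \frac{M^2 n^2\sigma^2}{\mu\varepsilon^3}$ of the claim.

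Finally I would expand $(I)$. Substituting $\tau$ gives $\frac{n^2\sigma^2}{\tau^2} = \Theta\left((n^2\sigma^2)^{1/3}(\mu M N)^{2/3}\right)$, hence
\[
(I) = \tilde{\mathcal{O}}\left(\frac{nM^2}{\mu N} + \frac{(n^2\sigma^2)^{1/3}(\mu M)^{2/3}N^{2/3}}{\mu N}\right) = \tilde{\mathcal{O}}\left(\frac{nM^2}{\mu N} + \frac{(n^2\sigma^2 M^2)^{1/3}}{\mu^{1/3}N^{1/3}}\right).
\]
Forcing the first summand $\lesssim \varepsilon$ gives $N \gtrsim \frac{nM^2}{\mu\varepsilon}$ (the first term of the claim), while forcing the second $\lesssim \varepsilon$ reproduces $N \gtrsim \frac{M^2 n^2\sigma^2}{\mu\varepsilon^3}$, consistent with the requirement from $(III)$. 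Taking the larger of the two requirements and folding the $\log(N+1)$ factor into the $\tilde{\mathcal{O}}$ yields
\[
N = \tilde{\mathcal{O}}\left(\frac{nM^2}{\mu\varepsilon} + \frac{M^2 n^2\sigma^2}{\mu\varepsilon^3}\right).
\]

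The algebra itself is routine once the balancing role of $\tau$ is recognized; the only delicate points are bookkeeping. I would need to verify that dropping the $\Delta^2$ term is legitimate in the stated regime (it is, since $\Delta \lesssim \varepsilon\tau/(\Omega n) \ll \sigma$), and that the $\log(N+1)$ factor can be absorbed into $\tilde{\mathcal{O}}$. The latter is a standard self-referential estimate: since $N$ enters $(I)$ only polynomially, one substitutes the leading polynomial order of $N$ inside the logarithm, producing a $\log(1/\varepsilon)$ (and $\log n$) factor that the $\tilde{\mathcal{O}}$ notation suppresses. This last consistency check, rather than any single computation, is the main thing to get right.
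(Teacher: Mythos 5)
Your proposal is correct and follows exactly the derivation the paper intends (the paper states this corollary without an explicit proof): substitute the prescribed $\tau$ and $\Delta$ into the three-term bound of Theorem 2, note that $\tau$ is chosen to balance $2\tau M$ against the variance part $\frac{n^2\sigma^2}{\tau^2\mu N}$, and read off the two requirements $N \gtrsim \frac{nM^2}{\mu\varepsilon}$ and $N \gtrsim \frac{M^2 n^2\sigma^2}{\mu\varepsilon^3}$ with logarithms absorbed into $\tilde{\mathcal{O}}$. Your bookkeeping (dropping the $\Delta^2$ term, handling the bias term, and the self-referential logarithm) is sound.
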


\textbf{Random euclidean direction with residual feedback.} In this part of the work we use the technique from \cite{zhang2020improving}. In more detail, in Algorithm \ref{alg1} we replace $g(z_k, \mathbf{e}_k, \tau,  \xi^{\pm}_k)$ with 
\begin{eqnarray}
    \label{eq:PrSt5}
    \tilde g(z_k, z_{k-1},  \mathbf{e}_k, \mathbf{e}_{k-1},  \xi_k, \xi_{k-1}) \hspace{5cm}\nonumber\\
    = \frac{n \left(\tilde \varphi(z_k + \tau \mathbf{e}_k,  \xi_k) -  \tilde \varphi(z_{k-1} + \tau \mathbf{e}_{k-1},  \xi_{k-1})\right)}{\tau}\left(
    \begin{array}{c}
    (\mathbf{e}_k)_x\\
    -(\mathbf{e}_k)_y \\
    \end{array}
    \right). 
\end{eqnarray}
The main advantage of this technique is that it requires only one call to the oracle per iteration. 

We consider only convex-concave case in the Eulidean setup, i.e. $V_z(w) = \frac{1}{2}\|z - w \|_2^2$. Let us carry out reasoning similar to the analysis of Theorem \ref{th_main}.

\begin{lemma}
    \label{beznos1}
    For $\tilde g_k \eqdef \tilde g(z_k, z_{k-1},  \mathbf{e}_k, \mathbf{e}_{k-1},  \xi_k, \xi_{k-1})$ defined in \eqref{eq:PrSt5} under Assumptions 2 and 4 the following inequalities holds:
    \begin{eqnarray}
    \label{boundlem1}
    \mathbb{E}\left[\|\tilde g_k\|^2_2\right] 
    &\leq& \alpha^k\mathbb{E}\left[\|\tilde g_{0}\|^2_2 \right] + \left(\frac{12n^2 (\sigma^2 + \Delta^2)}{\tau^2}+ 12n^2M^2\right) \frac{1}{1 - \alpha},
    \end{eqnarray}
     where  $\alpha = \frac{6\gamma^2 n^2M^2}{\tau^2} < 1$.
\end{lemma}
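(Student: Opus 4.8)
The plan is to establish a one-step recursion of the form $\mathbb{E}[\|\tilde g_k\|_2^2] \le \alpha\,\mathbb{E}[\|\tilde g_{k-1}\|_2^2] + b$ and then unroll it. First I would observe that the block vector $((\mathbf{e}_k)_x, -(\mathbf{e}_k)_y)^\top$ has the same Euclidean norm as $\mathbf{e}_k$, which lies on the unit sphere, so the vector factor contributes a factor $1$ and
\begin{equation*}
\|\tilde g_k\|_2^2 = \frac{n^2}{\tau^2}\big(\tilde \varphi(z_k + \tau \mathbf{e}_k, \xi_k) - \tilde \varphi(z_{k-1} + \tau \mathbf{e}_{k-1}, \xi_{k-1})\big)^2.
\end{equation*}
Using the noise model \eqref{PrSt1}, I would split the bracketed difference into three parts: the exact increment $A = \varphi(z_k + \tau \mathbf{e}_k) - \varphi(z_{k-1} + \tau \mathbf{e}_{k-1})$, the stochastic part $B = \xi_k - \xi_{k-1}$, and the bias part $C = \delta(z_k + \tau \mathbf{e}_k) - \delta(z_{k-1} + \tau \mathbf{e}_{k-1})$, then apply $(A+B+C)^2 \le 3(A^2+B^2+C^2)$.

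The heart of the argument is the term $A$, which produces the recursion. By $M$-Lipschitz continuity (Assumption 2) and Young's inequality,
\begin{equation*}
A^2 \le 2M^2\|z_k - z_{k-1}\|_2^2 + 2M^2\tau^2\|\mathbf{e}_k - \mathbf{e}_{k-1}\|_2^2.
\end{equation*}
Here I would invoke the defining step of Algorithm~\ref{alg1}, namely $z_k = \text{prox}_{z_{k-1}}(\gamma\,\tilde g_{k-1})$: in the Euclidean geometry the prox-operator is the projection $\Pi_{\mathcal{Z}}(z_{k-1} - \gamma\,\tilde g_{k-1})$, and since projection onto the convex set $\mathcal{Z}$ is non-expansive and $z_{k-1}\in\mathcal{Z}$, one gets $\|z_k - z_{k-1}\|_2 \le \gamma\|\tilde g_{k-1}\|_2$. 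This is exactly where the Euclidean restriction of the lemma is used, and it is the step I expect to be the main (conceptual) obstacle, since it is what links the current gradient estimate to the previous one and turns the bound into a genuine recursion rather than a static inequality.

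It then remains to take expectations term by term. Using that $\mathbf{e}_k$ is drawn independently of $\mathbf{e}_{k-1}, z_k, z_{k-1}$ and is mean-zero, the cross term vanishes and $\mathbb{E}[\|\mathbf{e}_k - \mathbf{e}_{k-1}\|_2^2] = 2$; by independence and zero mean of the noise, $\mathbb{E}[B^2] = \mathbb{E}[(\xi_k - \xi_{k-1})^2] \le 2\sigma^2$; and $|C| \le 2\Delta$ gives $C^2 \le 4\Delta^2$ by Assumption~\ref{noise_restrictions}. Collecting the pieces (rounding the constant on the $\sigma^2$ term upward) yields
\begin{equation*}
\mathbb{E}[\|\tilde g_k\|_2^2] \le \frac{6\gamma^2 n^2 M^2}{\tau^2}\,\mathbb{E}[\|\tilde g_{k-1}\|_2^2] + \left(\frac{12 n^2(\sigma^2 + \Delta^2)}{\tau^2} + 12 n^2 M^2\right),
\end{equation*}
which is precisely $\mathbb{E}[\|\tilde g_k\|_2^2] \le \alpha\,\mathbb{E}[\|\tilde g_{k-1}\|_2^2] + b$ with $\alpha$ as stated. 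Finally I would unroll this recursion over the $k$ steps and bound the geometric sum $\sum_{j=0}^{k-1}\alpha^j$ by $\tfrac{1}{1-\alpha}$, valid because $\alpha < 1$ by assumption, to obtain the claimed inequality \eqref{boundlem1}. The only additional care needed is to carry the conditional expectations through correctly via the tower property, since $z_k$ itself depends on $\mathbf{e}_{k-1}$ and $\xi_{k-1}$.
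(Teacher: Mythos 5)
Your proof is correct and follows essentially the same route as the paper's: the same decomposition of the one-point difference (splitting the exact increment at the intermediate point $\varphi(z_{k-1}+\tau\mathbf{e}_k)$), the same use of non-expansiveness of the Euclidean prox/projection step to get $\|z_k - z_{k-1}\|_2 \le \gamma\|\tilde g_{k-1}\|_2$ and hence the recursion with $\alpha = \frac{6\gamma^2 n^2 M^2}{\tau^2}$, and the same geometric-sum unrolling. The only immaterial difference is your nested $3\times 2$ splitting in place of the paper's flat six-term application of \eqref{eq:squared_sum}, which even gives a marginally sharper constant on the $\sigma^2$ term.
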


\begin{lemma}
    Under Assumption 4 it holds that 
    \begin{eqnarray}
    \label{temp9851}
    \tilde \nabla \hat{\varphi}(z_k) = \mathbb{E}_{\mathbf{e}_k}\left[\frac{n\left(\varphi(z_k + \tau \mathbf{e}_k) -  \varphi(z + \tau \mathbf{e}_{k-1})\right)}{\tau}\left(
    \begin{array}{c}
    (\mathbf{e}_k)_x\\
    -(\mathbf{e}_k)_y \\
    \end{array}\right)\right],\\
    \|\mathbb{E}_{\mathbf{e}_k}[ \tilde g_k ] - \tilde \nabla \hat{\varphi}(z_k)\|_2 \leq \frac{\Delta n}{\tau} \hspace{4.5cm}\label{temp9861}.
    \end{eqnarray}
\end{lemma}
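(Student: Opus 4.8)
The plan is to reduce both claims to the already-established Lemma \ref{lemmasham} together with the symmetry of the uniform distribution on the sphere. The decisive structural feature of the residual estimator \eqref{eq:PrSt5} is that its second function value $\tilde\varphi(z_{k-1}+\tau\mathbf{e}_{k-1},\xi_{k-1})$ does not depend on $\mathbf{e}_k$; since $\mathbf{e}_k$ is drawn uniformly on the unit sphere we have $\mathbb{E}_{\mathbf{e}_k}[\mathbf{e}_k]=0$, so once we multiply by $((\mathbf{e}_k)_x,-(\mathbf{e}_k)_y)^\top$ and average over $\mathbf{e}_k$ this whole residual term contributes nothing.

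For the first identity \eqref{temp9851}, I would start from the two-sided representation of $\tilde\nabla\hat{\varphi}$ in \eqref{temp985} and symmetrize. Writing $\mathbf{e}$ for the integration variable and using invariance of its distribution under $\mathbf{e}\mapsto-\mathbf{e}$, the substitution shows $\mathbb{E}_{\mathbf{e}}[\varphi(z-\tau\mathbf{e})(\mathbf{e}_x,-\mathbf{e}_y)^\top]=-\mathbb{E}_{\mathbf{e}}[\varphi(z+\tau\mathbf{e})(\mathbf{e}_x,-\mathbf{e}_y)^\top]$, so the two halves of \eqref{temp985} coincide and the prefactor $2\tau$ collapses to $\tau$. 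This yields the one-sided formula $\tilde\nabla\hat{\varphi}(z_k)=\mathbb{E}_{\mathbf{e}_k}[(n/\tau)\varphi(z_k+\tau\mathbf{e}_k)((\mathbf{e}_k)_x,-(\mathbf{e}_k)_y)^\top]$, and adjoining the (vanishing) constant term built from $z_{k-1}$ gives exactly \eqref{temp9851}.

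For the bias bound \eqref{temp9861}, I would substitute the oracle model \eqref{PrSt1}, $\tilde\varphi=\varphi+\xi+\delta$, into $\tilde g_k$ and take the conditional expectation over $(\mathbf{e}_k,\xi_k)$ given the past. Four terms appear: the noise term vanishes because $\xi_k$ is zero-mean and independent of $\mathbf{e}_k$ (Assumption \ref{noise_restrictions}); the previous-iterate term vanishes by the $\mathbb{E}_{\mathbf{e}_k}[\mathbf{e}_k]=0$ argument above; the clean term produces exactly $\tilde\nabla\hat{\varphi}(z_k)$ by the first identity; and only the deterministic bias survives, leaving $\mathbb{E}_{\mathbf{e}_k}[(n/\tau)\delta(z_k+\tau\mathbf{e}_k)((\mathbf{e}_k)_x,-(\mathbf{e}_k)_y)^\top]$. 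I would then pass the norm inside the expectation, bound $|\delta|\le\Delta$ via Assumption \ref{noise_restrictions}, and note that the rearranged direction has the same Euclidean norm as $\mathbf{e}_k$, namely $1$, which directly gives $\Delta n/\tau$.

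The only point requiring care — rather than a genuine obstacle — is making explicit that the displayed expectation is conditional on the history $(z_{k-1},\mathbf{e}_{k-1},\xi_{k-1})$, so that the residual value is literally a constant when one integrates out $\mathbf{e}_k$; this is precisely what lets the single-call residual estimator inherit the bias analysis of the two-call estimator in Lemma \ref{lemmasham}. Because we restrict to the Euclidean geometry, the direction $((\mathbf{e}_k)_x,-(\mathbf{e}_k)_y)^\top$ has $\ell_2$-norm exactly $1$, which is why the final bound carries the clean factor $n\Delta/\tau$ with no $a_q$ appearing.
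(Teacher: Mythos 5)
Your proposal is correct, and it is essentially the argument the paper intends: the paper states this lemma without an explicit proof (its appendix only proves the two‑point analogue, Lemma \ref{lemmasham}), and your reduction supplies exactly the missing steps in the same spirit. Symmetrizing \eqref{temp985} under $\mathbf{e}\mapsto-\mathbf{e}$ to get the one‑sided representation, annihilating the residual value $\tilde\varphi(z_{k-1}+\tau\mathbf{e}_{k-1},\xi_{k-1})$ through $\mathbb{E}_{\mathbf{e}_k}\left[((\mathbf{e}_k)_x,-(\mathbf{e}_k)_y)^\top\right]=0$ together with independence, and then isolating the surviving $\delta$-term and bounding it via Jensen, $|\delta|\le\Delta$ from \eqref{eq:PrSt2}, and $\|((\mathbf{e}_k)_x,-(\mathbf{e}_k)_y)\|_2=1$ mirrors precisely how the paper's proof of Lemma \ref{lemmasham} handles its $\delta$-difference term, with the conditional-on-history reading of $\mathbb{E}_{\mathbf{e}_k}$ being the correct interpretation.
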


\begin{theorem} 
Let problem \eqref{problem} with function $\varphi(x,y)$ be solved using Algorithm \ref{alg1} with $V_z(w) = \frac{1}{2}\|z-w\|^2_2$ and the oracle \eqref{eq:PrSt5}. Assume, that the set $\mathcal{Z}$, the convex-concave function $\varphi(x,y)$ and its inexact modification $\widetilde{\varphi}(x,y)$ satisfy Assumptions 1, 2, 4. Denote by $N$ the number of iterations and $\gamma_k = \gamma = const$. Then the rate of convergence is given by the following expression:
\begin{eqnarray*}
    \mathbb{E}\left[\varepsilon_{sad}(\bar z_{N})\right] &\leq&
    \frac{3\Omega^2}{2\gamma(N+1)} + \frac{3\gamma}{2(N+1)(1 - \alpha)} \mathbb{E}\left[\|\tilde g_{0}\|^2_2 \right] \nonumber\\
    && + \frac{3\gamma}{2(1 - \alpha)} \left(\frac{12n^2 (\sigma^2 + \Delta^2)}{\tau^2}+ 12n^2M^2\right)  + 2\tau M  + \frac{\Delta \Omega n}{\tau}.
\end{eqnarray*} 
$\Omega$ is a diameter of $\mathcal{Z}$, $\alpha = \frac{6\gamma^2 n^2M^2}{\tau^2} < 1$.
\end{theorem}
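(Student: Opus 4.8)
The plan is to follow the Mirror Descent template already used for Theorem~\ref{th_main}, merely substituting the second-moment estimate of Lemma~\ref{beznos1} (inequality~\eqref{boundlem1}) and the bias estimate~\eqref{temp9861} for the residual-feedback estimator~\eqref{eq:PrSt5}. First I would write the one-step prox inequality: since $V_z(w)=\tfrac12\|z-w\|_2^2$ is $1$-strongly convex, the update $z_{k+1}=\mathrm{prox}_{z_k}(\gamma\tilde g_k)$ gives, for every fixed $u\in\mathcal Z$,
\[
\gamma\langle\tilde g_k,z_k-u\rangle\le V_{z_k}(u)-V_{z_{k+1}}(u)+\tfrac{\gamma^2}{2}\|\tilde g_k\|_2^2 .
\]
Summing over $k=0,\dots,N$, telescoping the Bregman terms, and using $V_{z_0}(u)\le\Omega^2/2$ together with $V_{z_{N+1}}(u)\ge0$, yields a bound that is uniform in $u$ and hence survives $\max_u$:
\[
\gamma\max_{u}\sum_{k=0}^{N}\langle\tilde g_k,z_k-u\rangle\le\tfrac{\Omega^2}{2}+\tfrac{\gamma^2}{2}\sum_{k=0}^{N}\|\tilde g_k\|_2^2 .
\]

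Next I would pass from the estimator to the smoothed gradient by decomposing $\tilde g_k=\tilde\nabla\hat\varphi(z_k)+b_k+\theta_k$, where $b_k\eqdef\mathbb E_{\mathbf e_k,\xi_k}[\tilde g_k]-\tilde\nabla\hat\varphi(z_k)$ and $\theta_k\eqdef\tilde g_k-\mathbb E_{\mathbf e_k,\xi_k}[\tilde g_k]$. The crucial point for the residual estimator is that, conditioning on the history that fixes $z_k,z_{k-1},\mathbf e_{k-1},\xi_{k-1}$, the ``old'' term $\tilde\varphi(z_{k-1}+\tau\mathbf e_{k-1},\xi_{k-1})$ in~\eqref{eq:PrSt5} is a constant and is annihilated by the zero-mean direction factor; this is precisely what makes $\theta_k$ a martingale difference, $\mathbb E_{\mathbf e_k,\xi_k}[\theta_k]=0$, and what gives $\|b_k\|_2\le\Delta n/\tau$ through~\eqref{temp9861}. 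The bias then contributes $\sum_k\langle b_k,u-z_k\rangle\le(N+1)\tfrac{\Delta n}{\tau}\Omega$ uniformly in $u$, which after normalisation becomes the summand $\tfrac{\Delta\Omega n}{\tau}$ in the statement.

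The hard part will be controlling the martingale piece $\max_u\sum_k\langle\theta_k,u-z_k\rangle$, because the maximising $u$ depends on the entire noise path, so its conditional expectation is not simply zero. The plan is to introduce an auxiliary (``ghost'') mirror-descent sequence $v_0=z_0$, $v_{k+1}=\mathrm{prox}_{v_k}(-\gamma\theta_k)$, for which the same one-step inequality gives $\gamma\sum_k\langle\theta_k,u-v_k\rangle\le\Omega^2/2+\tfrac{\gamma^2}{2}\sum_k\|\theta_k\|_2^2$ uniformly in $u$, while the leftover $\gamma\sum_k\langle\theta_k,v_k-z_k\rangle$ is a genuine martingale (both $v_k$ and $z_k$ are measurable with respect to the past) and vanishes in expectation. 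Using $\mathbb E\|\theta_k\|_2^2\le\mathbb E\|\tilde g_k\|_2^2$ and collecting this $\Omega^2/2$ plus second-moment contribution together with those produced by the primal prox step is what yields the factor $3$ in front of $\tfrac{\Omega^2}{2\gamma(N+1)}$ and of $\tfrac{\gamma}{2(N+1)}\sum_k\mathbb E\|\tilde g_k\|_2^2$.

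Finally I would close the argument with convexity--concavity and smoothing. By Lemma~\ref{lemma1} the function $\hat\varphi$ is convex--concave, so $\hat\varphi(x_k,y')-\hat\varphi(x',y_k)\le\langle\tilde\nabla\hat\varphi(z_k),z_k-u\rangle$ with $u=(x',y')$; averaging through Jensen over $\bar z_N$ turns $\tfrac{1}{N+1}\max_u\sum_k\langle\tilde\nabla\hat\varphi(z_k),z_k-u\rangle$ into an upper bound on the smoothed saddle gap, and~\eqref{lemma1_0} applied on both the $\max_{y'}$ and $\min_{x'}$ sides adds the term $2\tau M$ to recover $\varepsilon_{sad}$ for $\varphi$. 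It only remains to insert the variance estimate~\eqref{boundlem1}: summing the geometric series gives
\[
\sum_{k=0}^{N}\mathbb E\|\tilde g_k\|_2^2\le\tfrac{1}{1-\alpha}\,\mathbb E\|\tilde g_0\|_2^2+(N+1)\tfrac{1}{1-\alpha}\Big(\tfrac{12n^2(\sigma^2+\Delta^2)}{\tau^2}+12n^2M^2\Big),
\]
which after dividing the accumulated inequality by $\gamma(N+1)$ produces exactly the two $(1-\alpha)^{-1}$ terms in the statement; the only genuinely new bookkeeping relative to Theorem~\ref{th_main} is this geometric summation of the residual-feedback second moment.
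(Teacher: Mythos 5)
Your proposal is correct and follows essentially the same route as the paper's proof: the one-step prox inequality summed and telescoped, the smoothing/Jensen argument for $\hat\varphi$ contributing $2\tau M$, an auxiliary (ghost) mirror-descent sequence to control $\max_u$ over the stochastic error, the bias bound \eqref{temp9861}, and the geometric-series summation of \eqref{boundlem1}. The only divergence is bookkeeping: the paper runs the ghost sequence on the full deviation $\tilde \Delta_k = \tilde g_k - \tilde\nabla\hat\varphi(z_k)$ and bounds $\mathbb{E}\left[\|\tilde \Delta_k\|_2^2\right]$ crudely by roughly four times the second-moment bound (with $\rho = \nicefrac{1}{2}$), which is exactly where its coefficients $\nicefrac{3}{2}$ come from, whereas your split into bias $b_k$ plus martingale part $\theta_k$ with $\mathbb{E}\left[\|\theta_k\|_2^2\right] \leq \mathbb{E}\left[\|\tilde g_k\|_2^2\right]$ actually produces coefficients $1$ rather than $\nicefrac{3}{2}$; so your claim that this bookkeeping ``yields the factor 3'' is an arithmetic slip, but in the favorable direction, and the theorem's (weaker) stated bound follows a fortiori.
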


Next we analyze the results:
\begin{corollary} Under the assumptions of the Theorem 3 let $\varepsilon$ be accuracy of the solution of the problem \eqref{problem} obtained using Algorithm \ref{alg1} with \eqref{eq:PrSt5}. Assume that
\begin{eqnarray*}
    \gamma = \left(\frac{\Omega \tau}{6n M N^{\frac{1}{2}}}\right), \quad \tau = \Theta \left( \frac{\sigma}{M}\cdot \frac{n^{\frac{1}{2}} }{N^{\frac{1}{4}}}\right),\quad \Delta = \mathcal{O} \left(\frac{\varepsilon \tau}{ \Omega n}\right),
\end{eqnarray*}
then the number of iterations to find $\varepsilon$-solution
\begin{eqnarray*}
    N = \mathcal{O} \left( \frac{n^2}{\varepsilon^4} \left[M^4 \Omega^4  +   \sigma^4\right]\right).
\end{eqnarray*}
\end{corollary}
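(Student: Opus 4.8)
The plan is to substitute the three prescribed quantities $\gamma=\frac{\Omega\tau}{6nMN^{1/2}}$, $\tau=\Theta\big(\frac{\sigma}{M}\cdot\frac{n^{1/2}}{N^{1/4}}\big)$ and $\Delta=\mathcal{O}\big(\frac{\varepsilon\tau}{\Omega n}\big)$ into the five-term bound of Theorem 3 and then invert the resulting inequality for $N$. Before touching the individual terms I would first pin down the prefactor $\frac{1}{1-\alpha}$, since every term carrying it must be shown harmless. Plugging the chosen $\gamma$ into $\alpha=\frac{6\gamma^2n^2M^2}{\tau^2}$ collapses the $n,M,\tau$ dependence and leaves $\alpha=\frac{\Omega^2}{6N}$; hence for every $N\ge\Omega^2/3$ we get $\alpha\le\tfrac12$ and $\frac{1}{1-\alpha}\le 2=\mathcal{O}(1)$. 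This legitimises treating all $(1-\alpha)^{-1}$ factors as absolute constants and, as a by-product, fixes the mild lower bound on $N$ under which the statement is meaningful.

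Next I would dispose of the two cheap terms. The term $\frac{\Delta\Omega n}{\tau}$ is eliminated directly by the prescribed bias level: since $\Delta=\mathcal{O}\big(\frac{\varepsilon\tau}{\Omega n}\big)$ this term is $\mathcal{O}(\varepsilon)$ by construction, which is exactly why $\Delta$ is chosen this way. For the term carrying $\mathbb{E}[\|\tilde g_0\|_2^2]$ I would bound the single initial residual estimator directly (as in Lemma 1/Lemma 4), giving $\mathbb{E}[\|\tilde g_0\|_2^2]=\mathcal{O}\big(\frac{n^2(\sigma^2+\Delta^2)}{\tau^2}+n^2M^2\big)$, i.e.\ the same order as the steady-state right-hand side of Lemma 4. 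Since its prefactor $\frac{3\gamma}{2(N+1)(1-\alpha)}$ carries one extra power of $\frac{1}{N+1}$ compared with the genuine variance term, after substitution it decays like $N^{-5/4}$ and faster, so it is strictly lower order than the leading $N^{-1/4}$ behaviour.

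The heart of the argument is then the balance of the three surviving contributions: the optimization term $\frac{3\Omega^2}{2\gamma(N+1)}$, the variance/residual term $\frac{3\gamma}{2(1-\alpha)}\big(\frac{12n^2(\sigma^2+\Delta^2)}{\tau^2}+12n^2M^2\big)$, and the smoothing-bias term $2\tau M$. Substituting $\gamma$ turns the first into $\Theta\big(\frac{\Omega n M}{N^{1/2}\tau}\big)$, and substituting the stated $\tau$ into all three reveals that the dominant pieces are precisely those scaling as $n^{1/2}N^{-1/4}$: an $M\Omega$-proportional piece arising from the optimization and variance terms and a $\sigma$-proportional piece arising from the smoothing bias, while the remaining pieces (e.g.\ the $\gamma n^2M^2$ contribution, which after substitution is $\Theta(N^{-3/4})$) fall away. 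Conceptually the role of $\tau$ is to equate the bias $2\tau M$ against the $1/\tau$-type terms, and the role of $\gamma$ is to equate $\frac{\Omega^2}{\gamma N}$ against the variance term while keeping $\alpha$ small; this is what makes all three leading terms share the same $n^{1/2}N^{-1/4}$ scale.

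Finally I would collect the leading behaviour as $\mathbb{E}[\varepsilon_{sad}(\bar z_N)]\le\mathcal{O}\big(\frac{n^{1/2}}{N^{1/4}}(M\Omega+\sigma)\big)+\mathcal{O}(\varepsilon)$, set the right-hand side $\le\varepsilon$, isolate $N^{1/4}\gtrsim\frac{n^{1/2}}{\varepsilon}(M\Omega+\sigma)$, and raise to the fourth power, using $(a+b)^4\le 8(a^4+b^4)$ to split the two coefficients. This yields $N=\mathcal{O}\big(\frac{n^2}{\varepsilon^4}(M^4\Omega^4+\sigma^4)\big)$, as claimed. I expect the main obstacle to be the careful bookkeeping in the third step — tracking exactly which of the several $\gamma$- and $\tau$-dependent pieces survive at order $N^{-1/4}$ after substitution, and confirming that both the $\mathbb{E}[\|\tilde g_0\|_2^2]$ term and all $\Delta$-dependent remainders are genuinely subleading — rather than the concluding inversion, which is routine.
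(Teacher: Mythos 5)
Your overall strategy is the right one, and it is essentially the paper's own (implicit) route: the paper offers no separate proof of this corollary, which is meant to follow by substituting the stated parameters into the five-term bound of Theorem 3. Your preliminary steps are also correct: $\alpha = \frac{6\gamma^2 n^2 M^2}{\tau^2} = \frac{\Omega^2}{6N}$, so $(1-\alpha)^{-1} \le 2$ once $N \ge \Omega^2/3$; the term $\frac{\Delta\Omega n}{\tau}$ is $\mathcal{O}(\varepsilon)$ by the choice of $\Delta$; and the $\mathbb{E}[\|\tilde g_0\|_2^2]$ term carries an extra factor $\frac{1}{N+1}$ and is indeed subleading. The genuine gap is in the step you yourself flagged as ``the main obstacle'': you assert, without computing, that the surviving terms collect into $\frac{n^{1/2}}{N^{1/4}}(M\Omega + \sigma)$, and this assertion is false for the parameters as stated. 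Carrying out the substitution with $\gamma = \frac{\Omega\tau}{6nMN^{1/2}}$ and $\tau = \Theta\bigl(\frac{\sigma}{M}\cdot\frac{n^{1/2}}{N^{1/4}}\bigr)$ gives
\begin{align*}
\frac{3\Omega^2}{2\gamma(N+1)} &= \Theta\!\left(\frac{\Omega n M}{N^{1/2}\tau}\right) = \Theta\!\left(\frac{\Omega M^2}{\sigma}\cdot\frac{n^{1/2}}{N^{1/4}}\right),\\
\frac{3\gamma}{2(1-\alpha)}\cdot\frac{12 n^2\sigma^2}{\tau^2} &= \Theta\!\left(\Omega\sigma\cdot\frac{n^{1/2}}{N^{1/4}}\right),
\qquad
2\tau M = \Theta\!\left(\sigma\cdot\frac{n^{1/2}}{N^{1/4}}\right),
\end{align*}
so the optimization term is proportional to $\Omega M^2/\sigma$ and the variance term to $\Omega\sigma$; neither is the ``$M\Omega$-proportional piece'' you claim. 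Collecting and inverting as you propose therefore yields
$N = \mathcal{O}\bigl(\frac{n^2}{\varepsilon^4}\bigl[\frac{\Omega^4 M^8}{\sigma^4} + \Omega^4\sigma^4 + \sigma^4\bigr]\bigr)$,
which coincides with the claimed $\mathcal{O}\bigl(\frac{n^2}{\varepsilon^4}[M^4\Omega^4 + \sigma^4]\bigr)$ only under additional relations such as $\sigma = \Theta(M)$.

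The source of the mismatch is the normalization of $\gamma$ by $M$. The claimed complexity does follow if one instead takes $\gamma = \Theta\bigl(\frac{\Omega\tau}{n\sigma N^{1/2}}\bigr)$: then both the optimization term and the $\sigma^2$-variance term become $\Theta\bigl(\frac{M\Omega\, n^{1/2}}{N^{1/4}}\bigr)$, the bias term remains $\Theta\bigl(\frac{\sigma n^{1/2}}{N^{1/4}}\bigr)$, and your concluding inversion with $(a+b)^4 \le 8(a^4+b^4)$ delivers exactly $N = \mathcal{O}\bigl(\frac{n^2}{\varepsilon^4}[M^4\Omega^4+\sigma^4]\bigr)$ --- this is also how the analogous terms balance in Corollary 1. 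So the defect is arguably inherited from the corollary's stated parameters (most likely a typo in $\gamma$), but a proof cannot paper over it: you must either carry the substitution through and repair (or at least flag) the choice of $\gamma$, or explicitly assume $\sigma \asymp M$. As written, the key bookkeeping claim on which your argument rests does not hold.
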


\subsection{Smooth case}\label{32}

\begin{assumption}[Gradient's Lipschitz continuity]\label{grad_lip}
The gradient $\nabla \varphi(z)$ of the function $\varphi$ is $L$-Lipschitz continuous in certain neighbourhood of $\mathcal{Z}$ with $L > 0$ w.r.t. norm $\|\cdot\|_2$ when 
\begin{equation*}
| \nabla \varphi(z)- \nabla \varphi(z')|\leq L\|z-z'\|_2, \quad \forall\ z, z'\in \mathcal{Z}.
\end{equation*}
\end{assumption}

\begin{lemma}[see Lemma A.3 from \cite{akhavan2020exploiting}]\label{lemma_ltau^2}
    Let ${\varphi}(z)$ be convex-concave (or $\mu$-strongly-convex-strongly-concave) and $\mathbf{e}$ be from $\mathcal{RS}^n_2(1)$. Then function $\hat{\varphi}(z)$ is convex-concave ($\mu$-strongly-convex-strongly-concave) too  and under Assumption \ref{grad_lip} satisfies:
    \begin{eqnarray}
        \label{Ltau^2}
        \sup_{z \in \mathcal{Z}} |\hat{\varphi}(z) - {\varphi}(z)| \leq \dfrac{L\tau^2}{2}. 
    \end{eqnarray}
\end{lemma}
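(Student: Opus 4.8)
The plan is to handle the two assertions separately, since the preservation of the convex--concave (or strongly convex--strongly concave) structure follows purely from the linearity of the smoothing operator, whereas the quantitative bound follows from the descent inequality associated with $L$-smoothness. For the structural part, the key observation is that $\hat{\varphi}(z) = \mathbb{E}_{\mathbf{e}}[\varphi(z + \tau \mathbf{e})]$ is an \emph{average} of shifted copies of $\varphi$, and both convexity and strong convexity are stable under averaging. Concretely, I would fix $y$ and a realization of $\mathbf{e} = (\mathbf{e}_x, \mathbf{e}_y)$, and note that the map $x \mapsto \varphi(x + \tau \mathbf{e}_x,\, y + \tau \mathbf{e}_y)$ is the composition of the convex function $\varphi(\cdot,\, y + \tau \mathbf{e}_y)$ with the affine shift $x \mapsto x + \tau \mathbf{e}_x$, hence convex in $x$; taking the expectation over $\mathbf{e}$ preserves convexity, so $\hat{\varphi}(\cdot, y)$ is convex, and the symmetric argument in $y$ gives concavity of $\hat{\varphi}(x, \cdot)$. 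For the $\mu$-strongly-convex--strongly-concave case the same reasoning applies to the defining inequality of Definition 1, the crucial point being that the quadratic term $\tfrac{\mu}{2}\|\cdot\|_2^2$ does not depend on $\mathbf{e}$ and therefore survives the averaging unchanged.

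For the quantitative estimate \eqref{Ltau^2}, I would invoke the two-sided quadratic bound implied by $L$-Lipschitz continuity of $\nabla \varphi$ (Assumption \ref{grad_lip}):
\[
\bigl|\varphi(z + \tau \mathbf{e}) - \varphi(z) - \tau \langle \nabla \varphi(z), \mathbf{e}\rangle\bigr| \leq \frac{L}{2}\|\tau \mathbf{e}\|_2^2 = \frac{L \tau^2}{2},
\]
which holds for every realization because $\|\mathbf{e}\|_2 = 1$ on the unit sphere, and which requires only $L$-smoothness, not convexity. Subtracting $\varphi(z)$ from the definition of $\hat{\varphi}$ and taking expectation, the linear term vanishes by the symmetry of the uniform distribution on the sphere, $\mathbb{E}_{\mathbf{e}}[\mathbf{e}] = 0$, so that
\[
|\hat{\varphi}(z) - \varphi(z)| = \bigl|\mathbb{E}_{\mathbf{e}}\bigl[\varphi(z + \tau \mathbf{e}) - \varphi(z) - \tau \langle \nabla \varphi(z), \mathbf{e}\rangle\bigr]\bigr| \leq \mathbb{E}_{\mathbf{e}}\frac{L\tau^2}{2} = \frac{L\tau^2}{2}.
\]
Taking the supremum over $z \in \mathcal{Z}$ then yields the claimed inequality.

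I do not expect a genuine obstacle here, as the argument is a direct transfer of the classical spherical-smoothing estimate to the saddle-point setting; the only points requiring care are bookkeeping ones. First, the smoothing perturbs the $x$- and $y$-blocks simultaneously through a single sphere vector, so one must verify that fixing one block and varying the other still produces an affine reparametrization of $\varphi$ (which it does). Second, the well-definedness of $\hat{\varphi}$ and of the point $z + \tau \mathbf{e}$ relies on $\varphi$ being specified on a neighbourhood of $\mathcal{Z}$, exactly as remarked after \eqref{eq:smoothphi}. Finally, I would emphasize that the two-sided descent inequality used above holds without any convexity hypothesis on $\varphi$, which is precisely what lets the single computation cover the convex--concave and the strongly-convex--strongly-concave cases uniformly.
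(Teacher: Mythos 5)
Your proposal is correct and complete. Note that the paper itself contains no proof of this lemma at all: it is imported as Lemma A.3 of \cite{akhavan2020exploiting}, and the appendix for Section \ref{32} merely remarks that the smooth-case theorems reuse the non-smooth proofs with $2M\tau$ replaced by $L\tau^2$. Your argument is exactly the standard one behind the cited result — for the structural claim, closedness of (strong) convexity/concavity under translation and averaging; for the bound \eqref{Ltau^2}, the two-sided descent inequality $|\varphi(z+\tau\mathbf{e})-\varphi(z)-\tau\langle\nabla\varphi(z),\mathbf{e}\rangle|\le \frac{L\tau^2}{2}$ combined with $\mathbb{E}_{\mathbf{e}}[\mathbf{e}]=0$ — and it is the natural upgrade of the paper's own proof of Lemma \ref{lemma1}, where plain $M$-Lipschitzness gives only $\tau M$; cancelling the linear term via the sphere's symmetry is precisely what buys the extra factor of $\tau$.
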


\begin{theorem} \label{th_smooth}
Let problem \eqref{problem} with function $\varphi(x,y)$ be solved using Algorithm \ref{alg1} with the oracle \eqref{eq:PrSt3}. Assume, that the set $\mathcal{Z}$, the convex-concave function  $\varphi(x,y)$ and its inexact modification $\widetilde{\varphi}(x,y)$ satisfy Assumptions 1,4,5. Denote by $N$ the number of iterations and $\gamma_k = \gamma =const$. Then the rate of convergence is given by the following expression:
\begin{eqnarray*}
    \mathbb{E}\left[\varepsilon_{sad}(\bar z_{N})\right] &\leq&
    \frac{3\Omega^2}{2 \gamma (N+1)} + \frac{3\gamma M^2_{all}}{2} + \frac{\Delta \Omega n a_q}{\tau} + L\tau^2.
\end{eqnarray*} 
$\Omega$ is a diameter of $\mathcal{Z}$, $M^2_{all} = 3\left(3n M^2 +\frac{n^2(\sigma^2 + \Delta^2)}{\tau^2}\right)a^2_q$.
\end{theorem}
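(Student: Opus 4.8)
The plan is to mirror the proof of Theorem~\ref{th_main} almost line for line, since the only structural difference is the final smoothing estimate. First I would run the standard Mirror Descent one-step analysis on the update $z_{k+1} = \text{prox}_{z_k}(\gamma g_k)$ with $g_k \eqdef g(z_k, \mathbf{e}_k, \tau, \xi^{\pm}_k)$. Using $1$-strong convexity of the prox-function together with Young's inequality, for every fixed $u \in \mathcal{Z}$ one gets the familiar bound
\begin{equation*}
\gamma\langle g_k, z_k - u\rangle \le V_{z_k}(u) - V_{z_{k+1}}(u) + \frac{\gamma^2}{2}\|g_k\|_q^2.
\end{equation*}
Telescoping over $k = 0,\dots,N$, dropping $-V_{z_{N+1}}(u) \le 0$, bounding $V_{z_0}(u)$ by $\Omega$ via Assumption~1, taking expectations, and invoking Lemma~\ref{beznos} to control $\EE\|g_k\|_q^2 \le M_{all}^2$ produces the first two terms $\tfrac{3\Omega^2}{2\gamma(N+1)} + \tfrac{3\gamma M_{all}^2}{2}$ (the numerical $\tfrac{3}{2}$ factors arise from the particular Mirror Descent estimate employed and are not optimized). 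Note that here $M$ is simply a bound on $\tilde\nabla\varphi$, which exists on the compact set $\mathcal{Z}$ under the smoothness Assumption~\ref{grad_lip}, so Lemma~\ref{beznos} applies unchanged.

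Next I would replace the stochastic field $g_k$ by the true smoothed gradient $\tilde\nabla\hat\varphi(z_k)$. Conditioning on the past and applying Lemma~\ref{lemmasham}, the quantity $\EE\langle g_k, z_k - u\rangle$ equals $\langle\tilde\nabla\hat\varphi(z_k), z_k - u\rangle$ up to an error controlled by $\|\EE g_k - \tilde\nabla\hat\varphi(z_k)\|_q\,\|z_k-u\|_p \le \tfrac{\Delta \Omega n a_q}{\tau}$; accumulated over the iterations this yields the bias term $\tfrac{\Delta\Omega n a_q}{\tau}$. Since $\hat\varphi$ is convex--concave (guaranteed here by Lemma~\ref{lemma_ltau^2}), the block structure of $\tilde\nabla\hat\varphi = (\nabla_x\hat\varphi, -\nabla_y\hat\varphi)$ gives, for $u = (u_x,u_y)$,
\begin{equation*}
\langle\tilde\nabla\hat\varphi(z_k), z_k - u\rangle \ge \hat\varphi(x_k, u_y) - \hat\varphi(u_x, y_k),
\end{equation*}
by convexity in $x$ and concavity in $y$. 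Averaging and applying Jensen's inequality (convexity in $x$, concavity in $y$) passes from $\tfrac{1}{N+1}\sum_k$ to the averaged iterate $\bar z_N$, so that $\EE[\hat\varphi(\bar x_N, u_y) - \hat\varphi(u_x, \bar y_N)]$ is bounded by the three terms already assembled, uniformly in $u$.

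Finally comes the one genuinely new step, and the only place where smoothness enters: converting the saddle gap of $\hat\varphi$ into that of $\varphi$. Choosing $u_y = \argmax_{y'}\varphi(\bar x_N, y')$ and $u_x = \argmin_{x'}\varphi(x', \bar y_N)$ and applying the uniform approximation bound of Lemma~\ref{lemma_ltau^2} twice --- once to raise $\varphi(\bar x_N, u_y)$ to $\hat\varphi(\bar x_N, u_y) + \tfrac{L\tau^2}{2}$ and once to lower $\hat\varphi(u_x,\bar y_N)$ to $\varphi(u_x,\bar y_N) - \tfrac{L\tau^2}{2}$ --- gives $\EE[\varepsilon_{sad}(\bar z_N)] \le \EE[\hat\varphi(\bar x_N, u_y) - \hat\varphi(u_x,\bar y_N)] + L\tau^2$, which is exactly the claimed $L\tau^2$ in place of the $2\tau M$ of Theorem~\ref{th_main}. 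The main obstacle is the mild technical care needed when exchanging the supremum/infimum over $u$ (which depend on the random $\bar z_N$) with the expectation in the definition of $\varepsilon_{sad}$; because the right-hand side depends on $u$ only through $V_{z_0}(u)$, which is bounded by $\Omega$ uniformly, this exchange is legitimate and the three assembled terms survive, completing the estimate.
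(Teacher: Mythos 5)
Your high-level plan is exactly the paper's own: the paper proves Theorem~\ref{th_smooth} by repeating the proof of Theorem~\ref{th_main} verbatim and replacing Lemma~\ref{lemma1} with Lemma~\ref{lemma_ltau^2}, so that the smoothing error $2\tau M$ becomes $2\cdot\nicefrac{L\tau^2}{2}=L\tau^2$; your final step is precisely that substitution, and your remark that $M$ still makes sense under Assumption~\ref{grad_lip} on a compact set is a reasonable patch for the theorem's implicit use of $M$. The genuine gap is in the middle of your argument. You establish the bound on $\EE[\hat\varphi(\bar x_N,u_y)-\hat\varphi(u_x,\bar y_N)]$ only for \emph{fixed} (deterministic) $u$, i.e.\ you control $\max_{u}\EE[D(u)]$ where $D(u)\eqdef\gamma\sum_{k}\langle \Delta_k,u-z_k\rangle$ and $\Delta_k\eqdef g_k-\tilde\nabla\hat\varphi(z_k)$, and then you substitute the random points $u_y=\argmax_{y'}\varphi(\bar x_N,y')$, $u_x=\argmin_{x'}\varphi(x',\bar y_N)$, which requires controlling $\EE[\max_{u}D(u)]$. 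These differ in the wrong direction, $\EE[\max_u D(u)]\ge \max_u \EE[D(u)]$, and your justification for the exchange --- that the right-hand side depends on $u$ only through a quantity uniformly bounded by $\Omega$ --- is not a valid argument: for deterministic $u$ the martingale part of $\Delta_k$ (namely $g_k-\EE[g_k\mid\text{past}]$) vanishes in expectation, but once $u$ is allowed to depend on the realized trajectory, $\gamma\sum_k\langle g_k-\EE[g_k\mid\text{past}],u-z_k\rangle$ no longer has zero mean, and its expected supremum over $u\in\mathcal{Z}$ is of the same order as the variance term, so it cannot be discarded.

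The paper closes exactly this hole with Lemma~\ref{lemma3} (Nemirovski's ``ghost iterate'' argument, Lemma 5.3.2 in \cite{nemirovski}): one introduces an auxiliary mirror-descent sequence $v_{k+1}=\text{prox}_{v_k}(-\rho\gamma\Delta_k)$, splits $D(u)=\gamma\sum_k\langle-\Delta_k,z_k-v_k\rangle+\gamma\sum_k\langle-\Delta_k,v_k-u\rangle$, bounds the second sum pathwise and simultaneously for all $u$ by $\nicefrac{\Omega^2}{2\rho}+\nicefrac{\rho\gamma^2}{2}\cdot\sum_k\|\Delta_k\|_q^2$, and uses that $v_k-z_k$ is measurable with respect to the past, so the conditional bias bound \eqref{temp986} applies to the first sum. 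This step is also the true origin of the $\nicefrac{3}{2}$ factors in the statement: $\nicefrac{1}{2}$ comes from the basic telescoping and an additional $1$ from the bound \eqref{19} on $\EE[\max_u D(u)]$, not from ``the particular Mirror Descent estimate employed''. A purely fixed-$u$ analysis would produce $\nicefrac{1}{2}$ factors but would not bound $\EE[\varepsilon_{sad}(\bar z_N)]$ at all. With Lemma~\ref{lemma3} inserted at this point, the rest of your proof coincides with the paper's.
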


Let's analyze the results:
\begin{corollary} Under the assumptions of the Theorem \ref{th_smooth} let $\varepsilon$ be accuracy of the solution of the problem \eqref{problem} obtained using Algorithm \ref{alg1}. Assume that
\begin{eqnarray}
    \gamma = \Theta \left(\frac{\Omega}{n^{\frac{1}{3} + \frac{2}{3q}} M N^{\frac{2}{3}}}\right), \quad \tau = \Theta \left( \frac{\sigma}{M} \cdot \frac{n^{\frac{1}{6} + \frac{1}{3q}}}{N^{\frac{1}{6}}}\right),\quad \Delta = \mathcal{O} \left(\frac{\varepsilon \tau}{\Omega n a_q}\right),
\end{eqnarray}
then the number of iterations to find $\varepsilon$-solution
\begin{eqnarray*}
    N = \mathcal{O} \left( \frac{n^{1 + \frac{2}{q}}}{\varepsilon^3} \left[M^3\Omega^3 + \frac{L^3 \sigma^3}{M^3}\right]\right).
\end{eqnarray*}
\end{corollary}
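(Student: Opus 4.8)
The plan is to start from the bound established in Theorem~\ref{th_smooth},
\begin{equation*}
\mathbb{E}\left[\varepsilon_{sad}(\bar z_{N})\right] \leq \frac{3\Omega^2}{2\gamma(N+1)} + \frac{3\gamma M^2_{all}}{2} + \frac{\Delta\Omega n a_q}{\tau} + L\tau^2,
\end{equation*}
substitute the three prescribed quantities $\gamma$, $\tau$, $\Delta$, show that every summand is $\mathcal{O}(\varepsilon)$ as soon as $N$ exceeds the claimed value, and then read off the dominant requirement on $N$. The first quantity to dispose of is $\Delta$: its prescribed value $\Delta = \mathcal{O}(\varepsilon\tau/(\Omega n a_q))$ is chosen precisely so that the third summand $\frac{\Delta\Omega n a_q}{\tau}$ equals $\mathcal{O}(\varepsilon)$ identically, independently of $N$. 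I would also check that with this $\Delta$ the contribution of $\Delta^2$ inside $M^2_{all} = 3(3nM^2 + \frac{n^2(\sigma^2+\Delta^2)}{\tau^2})a_q^2$ is negligible next to $\sigma^2$, so that $M^2_{all}$ may be replaced by $3(3nM^2 + \frac{n^2\sigma^2}{\tau^2})a_q^2$ throughout.

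Next I would insert $\tau = \Theta(\frac{\sigma}{M}\cdot\frac{n^{\frac{1}{6}+\frac{1}{3q}}}{N^{\frac{1}{6}}})$ into $M^2_{all}$. A direct computation gives $\frac{n^2\sigma^2}{\tau^2} = \Theta(M^2 n^{\frac{5}{3}-\frac{2}{3q}}N^{\frac{1}{3}})$, and since this grows with $N$ it dominates the constant $3nM^2$ in the regime of interest; hence $M^2_{all} = \Theta(a_q^2 M^2 n^{\frac{5}{3}-\frac{2}{3q}}N^{\frac{1}{3}})$. Recalling from \eqref{eq:condition_on_u} that $a_q^2 = \Theta(C(n,q)\,n^{\frac{2}{q}-1})$ with $C(n,q) = \min\{2q-1, 32\log n - 8\}$, the second summand $\frac{3\gamma M^2_{all}}{2}$ and the first summand $\frac{3\Omega^2}{2\gamma(N+1)}$ become functions of $\gamma$ alone for fixed $N$, and the prescribed $\gamma = \Theta(\frac{\Omega}{n^{\frac{1}{3}+\frac{2}{3q}}MN^{\frac{2}{3}}})$ is exactly the value that balances them: both collapse to $\Theta(C(n,q)\,\Omega M\, n^{\frac{1}{3}+\frac{2}{3q}}N^{-\frac{1}{3}})$.

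The final step is to compare the two surviving, $N$-dependent summands. The balanced optimization error scales as $\Theta(C(n,q)\,\Omega M\, n^{\frac{1}{3}+\frac{2}{3q}}N^{-\frac{1}{3}})$, while the smoothing-bias term is $L\tau^2 = \Theta(\frac{L\sigma^2}{M^2}\,n^{\frac{1}{3}+\frac{2}{3q}}N^{-\frac{1}{3}})$; crucially both carry the same factor $n^{\frac{1}{3}+\frac{2}{3q}}N^{-\frac{1}{3}}$, differing only in their coefficients $\Omega M$ and $L\sigma^2/M^2$. Thus $\mathbb{E}[\varepsilon_{sad}(\bar z_N)] = \mathcal{O}((\Omega M + L\sigma^2/M^2)\,n^{\frac{1}{3}+\frac{2}{3q}}N^{-\frac{1}{3}}) + \mathcal{O}(\varepsilon)$ up to the $C(n,q)$ factor. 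Setting the right-hand side to $\varepsilon$, isolating $N^{\frac{1}{3}}$, cubing, and separating the cube of the sum of the two coefficients yields $N = \mathcal{O}(\frac{n^{1+\frac{2}{q}}}{\varepsilon^3}[M^3\Omega^3 + (L\sigma^2/M^2)^3])$, which reproduces the claimed complexity with its two additive bracket contributions.

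The main obstacle is purely the bookkeeping of the dimension exponents: one must verify that after inserting $a_q^2 = \Theta(C(n,q)n^{\frac{2}{q}-1})$ the powers of $n$ in both the optimization term and the bias term consolidate to exactly $n^{\frac{1}{3}+\frac{2}{3q}}$, so that the final exponent is precisely $n^{1+\frac{2}{q}}$ (the factor $C(n,q)$ being constant at $q=2$ and logarithmic in general, which is why the tables state $\tilde{\mathcal{O}}$). A second point to watch — the one I would double-check against the statement — is the exponent of $\sigma/M$ in the second bracket term: since the bias coefficient is $L\sigma^2/M^2$, cubing it produces $L^3\sigma^6/M^6$, so I would confirm whether the intended second term is this or the $L^3\sigma^3/M^3$ as written. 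Finally I would confirm that the two lower-order requirements on $N$ — from the $3nM^2$ piece of $M^2_{all}$ (which, after balancing, scales as $N^{-\frac{2}{3}}$) and from the bound $\Delta\ll\sigma$ — are dominated by the $N^{-\frac{1}{3}}$ terms and hence impose no stronger constraint.
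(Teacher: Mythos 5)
Your substitution-and-balancing route is exactly the argument the paper leaves implicit for this corollary (the appendix proves only the theorems, not the corollaries), and your bookkeeping is correct: with the prescribed parameters the $\Delta$-term is $\mathcal{O}(\varepsilon)$ by construction, the two optimization terms balance at $\Theta\bigl(C(n,q)\,\Omega M\, n^{\frac{1}{3}+\frac{2}{3q}}N^{-\frac{1}{3}}\bigr)$, the bias term is $L\tau^2 = \Theta\bigl(\tfrac{L\sigma^2}{M^2}\, n^{\frac{1}{3}+\frac{2}{3q}}N^{-\frac{1}{3}}\bigr)$, and the two lower-order checks (the $3nM^2$ piece contributing only $\mathcal{O}(N^{-\frac{2}{3}})$, and $\Delta^2\lesssim\sigma^2$ inside $M^2_{all}$) both go through. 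This is the same calculation that, in the non-smooth setting, reproduces Corollary 1 verbatim, including its $C^4(n,q)$ factor.

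The discrepancy you flag is real, and it is on the paper's side, not yours. From $\tau = \Theta\bigl(\tfrac{\sigma}{M}\cdot\tfrac{n^{1/6+1/(3q)}}{N^{1/6}}\bigr)$ one gets $L\tau^2 = \Theta\bigl(\tfrac{L\sigma^2}{M^2}\,n^{\frac{1}{3}+\frac{2}{3q}}N^{-\frac{1}{3}}\bigr)$, and forcing this below $\varepsilon$ requires $N \geq c\,\tfrac{n^{1+2/q}}{\varepsilon^3}\cdot\tfrac{L^3\sigma^6}{M^6}$ for some absolute constant $c$; there is no way to extract $L^3\sigma^3/M^3$ from the stated parameters. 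A units check confirms this: $\sigma$, $M\Omega$, and $\varepsilon$ all carry function-value units, so $L\sigma^2/M^2$ is a function value and $(L\sigma^2/M^2)^3/\varepsilon^3$ is dimensionless, whereas $L^3\sigma^3/(M^3\varepsilon^3)$ is not. The bracket in the corollary should read $\bigl[C^3(n,q)\,M^3\Omega^3 + L^3\sigma^6/M^6\bigr]$ — the paper has also silently dropped the $C^3(n,q)$ factor that the non-smooth corollary retains, which, as you note, is harmless only up to the logarithm hidden in $\tilde{\mathcal{O}}$. So your proof is the intended one, carried out correctly, and it identifies a typo in the statement rather than containing a gap.
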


\begin{theorem}\label{th_smooth_strong}
Let problem \eqref{problem} with function $\varphi(x,y)$ be solved using Algorithm \ref{alg1} with $V_z(w) = \frac{1}{2}\|z-w\|^2_2$ and the oracle \eqref{eq:PrSt3}. Assume, that the set $\mathcal{Z}$, the function  $\varphi(x,y)$ and its inexact modification $\widetilde{\varphi}(x,y)$ satisfy Assumptions 1, 3, 4, 5. Denote by $N$ the number of iterations and $\gamma_k = \frac{1}{\mu k}$. Then the rate of convergence is given by the following expression:
\begin{eqnarray*}
\EE\left[\varphi(\bar x_N, y^*) - \varphi(x^*, \bar y_N)\right]
    &\leq& \frac{M^2_{all} \log (N+1)}{2 \mu (N+1)} + \frac{\Delta n\Omega}{\tau} + L\tau^2.
\end{eqnarray*}
$\Omega$ is a diameter of $\mathcal{Z}$, $M^2_{all} = 3\left(3n M^2 +\frac{n^2(\sigma^2 + \Delta^2)}{\tau^2}\right)$.
\end{theorem}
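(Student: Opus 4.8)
The plan is to adapt the Mirror Descent argument of Theorem \ref{th_main1} to the smooth setting, the only structural change being that the smoothing error is now controlled by Lemma \ref{lemma_ltau^2} instead of Lemma \ref{lemma1}. Working in the Euclidean geometry $V_z(w) = \frac{1}{2}\|z-w\|_2^2$, I would start from the standard one-step inequality for the prox-mapping applied to $g_k \eqdef g(z_k, \mathbf{e}_k, \tau, \xi_k^{\pm})$: for every $u \in \mathcal{Z}$,
\[
\gamma_k \langle g_k, z_k - u \rangle \leq V_{z_k}(u) - V_{z_{k+1}}(u) + \frac{\gamma_k^2}{2}\|g_k\|_2^2 .
\]
Taking the conditional expectation over $\mathbf{e}_k$ and $\xi_k^{\pm}$, I would replace the inner product on the left by $\langle \tilde\nabla\hat\varphi(z_k), z_k - u\rangle$ up to a bias term, which by Lemma \ref{lemmasham} (with $a_2 = 1$ in the Euclidean case) is at most $\frac{\Delta n}{\tau}\|z_k - u\|_2 \leq \frac{\Delta n \Omega}{\tau}$; the last term is bounded in expectation by $M^2_{all}$ via Lemma \ref{beznos} with $q = 2$, $a_2^2 = 1$.

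Next I would exploit the saddle structure. Since $\hat\varphi$ is $\mu$-strongly-convex--strongly-concave by Lemma \ref{lemma_ltau^2}, splitting $z_k = (x_k, y_k)$, $u = (x', y')$ and applying the strong-convexity inequality to $\hat\varphi(\cdot, y_k)$ together with the strong-concavity inequality to $\hat\varphi(x_k, \cdot)$ gives
\[
\langle \tilde\nabla\hat\varphi(z_k), z_k - u\rangle \geq \hat\varphi(x_k, y') - \hat\varphi(x', y_k) + \frac{\mu}{2}\|z_k - u\|_2^2 .
\]
In the Euclidean case $\frac{\mu}{2}\|z_k-u\|_2^2 = \mu V_{z_k}(u)$, so after rearranging and taking full expectation I obtain a recursion of the form
\[
\gamma_k \EE[\hat\varphi(x_k, y') - \hat\varphi(x', y_k)] \leq (1 - \gamma_k\mu)\EE[V_{z_k}(u)] - \EE[V_{z_{k+1}}(u)] + \frac{\gamma_k^2}{2}M^2_{all} + \gamma_k\frac{\Delta n\Omega}{\tau} .
\]

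Substituting $\gamma_k = \frac{1}{\mu k}$ makes $1 - \gamma_k\mu = \frac{k-1}{k}$, so dividing by $\gamma_k$ turns the divergence terms into the telescoping pair $\mu(k-1)\EE[V_{z_k}(u)] - \mu k \EE[V_{z_{k+1}}(u)]$. Summing over the iterations collapses this to a nonpositive boundary term, leaves the step-size contribution $\sum_k \frac{M^2_{all}}{2\mu k} = O\!\left(\frac{M^2_{all}\log(N+1)}{2\mu}\right)$ from the harmonic sum, and a bias contribution of order $N\cdot\frac{\Delta n\Omega}{\tau}$. Dividing by the number of terms, invoking convexity of $\hat\varphi(\cdot, y')$ and concavity of $\hat\varphi(x', \cdot)$ (Jensen) to pass from the iterates to the average $\bar z_N$, and finally choosing $u = (x^*, y^*)$ and applying Lemma \ref{lemma_ltau^2} twice to replace each $\hat\varphi$ by $\varphi$ at a cost of $2\cdot\frac{L\tau^2}{2} = L\tau^2$, yields exactly the claimed bound.

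I expect the main obstacle to be bookkeeping rather than any isolated hard estimate: one must check that the strong-convexity term $\mu V_{z_k}(u)$ matches precisely the coefficient produced by $\gamma_k = 1/(\mu k)$ so that the telescoping is clean, and that passing from $\hat\varphi$ to $\varphi$ at the saddle point $(x^*, y^*)$ of $\varphi$ is legitimate (the strong-convexity--concavity inequalities hold for arbitrary $u$, so plugging in the saddle point of $\varphi$ is fine). The restriction to the Euclidean prox is essential here, since it is what lets $\frac{\mu}{2}\|z_k - u\|_2^2$ be identified with $\mu V_{z_k}(u)$ and hence the recursion close.
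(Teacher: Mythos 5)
Your proposal is correct and is essentially the paper's own proof: the paper proves this theorem by repeating the argument of Theorem \ref{th_main1} verbatim (prox one-step inequality, bias bound $\nicefrac{\Delta n \Omega}{\tau}$ from Lemma \ref{lemmasham} with $a_2=1$, second-moment bound $M^2_{all}$ from Lemma \ref{beznos}, strong-convexity--concavity of $\hat\varphi$, telescoping under $\gamma_k = \nicefrac{1}{\mu k}$, harmonic sum, Jensen), with Lemma \ref{lemma_ltau^2} replacing Lemma \ref{lemma1} so that the smoothing cost $2\tau M$ becomes $L\tau^2$. The only deviations (keeping a general $u$ until the end, and converting $\hat\varphi$ back to $\varphi$ after averaging rather than per iterate) are cosmetic and do not change the argument.
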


Let's analyze the results:
\begin{corollary} Under the assumptions of the Theorem \ref{th_smooth_strong} let $\varepsilon$ be accuracy of the solution of the problem \eqref{problem} obtained using Algorithm \ref{alg1}. Assume that
\begin{eqnarray*}
    \tau = \Theta \left( \sqrt[4]{\frac{\sigma^2}{\mu L}} \cdot \frac{n^{\frac{1}{2}}}{N^{\frac{1}{4}}}\right),\quad \Delta = \mathcal{O} \left(\frac{\varepsilon \tau}{\Omega n}\right),
\end{eqnarray*}
then the number of iterations to find $\varepsilon$-solution
\begin{eqnarray*}
    N = \widetilde{\mathcal{O}} \left( \frac{n M^2}{\mu \varepsilon} + \frac{Ln ^2 \sigma^2}{\mu \varepsilon^2} \right).
\end{eqnarray*}
\end{corollary}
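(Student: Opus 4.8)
The plan is to treat the corollary as a pure calibration argument on top of Theorem~\ref{th_smooth_strong}: start from the three-term bound
\begin{equation*}
\EE\left[\varphi(\bar x_N, y^*) - \varphi(x^*, \bar y_N)\right] \leq \frac{M_{all}^2 \log (N+1)}{2 \mu (N+1)} + \frac{\Delta n\Omega}{\tau} + L\tau^2,
\end{equation*}
then force each of the three terms to be $\mathcal{O}(\varepsilon)$ under the prescribed choices of $\tau$ and $\Delta$, and finally read off the largest lower bound on $N$ that these constraints impose. Throughout I would treat $N$ as the free variable, substitute the stated $\tau = \Theta\big((\sigma^2/\mu L)^{1/4} n^{1/2} N^{-1/4}\big)$ wherever it appears, and absorb the factor $\log(N+1)$ into the $\widetilde{\mathcal{O}}$ notation at the very end.

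First I would dispatch the deterministic-noise term. Setting $\tfrac{\Delta n \Omega}{\tau} = \mathcal{O}(\varepsilon)$ is exactly what forces $\Delta = \mathcal{O}(\varepsilon\tau/(\Omega n))$, the prescribed choice, so this term is $\mathcal{O}(\varepsilon)$ by construction. A short side computation then shows that with this $\Delta$ the contribution $\tfrac{n^2\Delta^2}{\tau^2}$ inside $M_{all}^2 = 3\big(3nM^2 + \tfrac{n^2(\sigma^2+\Delta^2)}{\tau^2}\big)$ is of order $\varepsilon^2/\Omega^2$, hence dominated by the $\tfrac{n^2\sigma^2}{\tau^2}$ part; I would therefore simplify $M_{all}^2 = \Theta\big(nM^2 + \tfrac{n^2\sigma^2}{\tau^2}\big)$ and carry only this from now on.

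Next I would substitute $\tau$. Squaring gives $\tau^2 = \Theta\big(\tfrac{\sigma}{\sqrt{\mu L}}\cdot\tfrac{n}{\sqrt{N}}\big)$, so the smoothing term becomes $L\tau^2 = \Theta\big(\sqrt{L/\mu}\,\sigma n/\sqrt{N}\big)$, and $L\tau^2 = \mathcal{O}(\varepsilon)$ yields the constraint $N \gtrsim \tfrac{L n^2\sigma^2}{\mu\varepsilon^2}$. For the main term I would plug $\tfrac{n^2\sigma^2}{\tau^2} = \Theta\big(n\sigma\sqrt{\mu L}\,\sqrt{N}\big)$ into $M_{all}^2$ and split the resulting expression along its two pieces. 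The deterministic piece contributes $\Theta\big(\tfrac{nM^2\log(N+1)}{\mu N}\big)$, whose smallness requires $N \gtrsim \tfrac{nM^2}{\mu\varepsilon}$ up to the log, giving the first term of the claimed bound. The stochastic piece contributes $\Theta\big(\tfrac{n\sigma\sqrt{\mu L}\,\sqrt{N}\,\log(N+1)}{\mu N}\big) = \widetilde{\Theta}\big(\tfrac{n\sigma\sqrt{L}}{\sqrt{\mu N}}\big)$, whose smallness again requires $N \gtrsim \tfrac{L n^2\sigma^2}{\mu\varepsilon^2}$, matching the smoothing constraint and giving the second term. Taking the maximum (equivalently the sum) of all requirements produces $N = \widetilde{\mathcal{O}}\big(\tfrac{nM^2}{\mu\varepsilon} + \tfrac{Ln^2\sigma^2}{\mu\varepsilon^2}\big)$.

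I expect the main delicacy to be the self-referential structure of the bound rather than any single calculation: because $M_{all}^2$ depends on $\tau$, and $\tau$ is chosen to scale with $N$, the ``leading'' convergence term does not simply decay like $1/N$ but instead acquires a $\sqrt{N}$ factor in its numerator through $n^2\sigma^2/\tau^2$. The key point to verify carefully is that the exponents in the prescribed $\tau$ are exactly the ones that make the stochastic part of the main term and the smoothing term $L\tau^2$ produce the \emph{same} requirement $N \gtrsim Ln^2\sigma^2/(\mu\varepsilon^2)$, so that the two are balanced and neither dominates. Once that balancing is confirmed and the $\log(N+1)$ factor is folded into $\widetilde{\mathcal{O}}$, the remaining algebra is routine.
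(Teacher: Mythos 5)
Your proposal is correct and takes exactly the route the paper intends: the paper states this corollary without a written proof, and the implicit argument is precisely your calibration --- substitute the prescribed $\tau$ and $\Delta$ into the bound of Theorem~\ref{th_smooth_strong}, observe that $n^2\sigma^2/\tau^2 = \Theta\bigl(n\sigma\sqrt{\mu L}\sqrt{N}\bigr)$ turns the stochastic part of the leading term into $\widetilde{\Theta}\bigl(n\sigma\sqrt{L}/\sqrt{\mu N}\bigr)$, which balances $L\tau^2$, and then read off the two constraints $N \gtrsim nM^2/(\mu\varepsilon)$ and $N \gtrsim Ln^2\sigma^2/(\mu\varepsilon^2)$ up to logarithms. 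Your remark that the prescribed $\tau$ is exactly the choice equalizing those two terms is also the correct account of where it comes from, so nothing is missing.
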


\subsection{Higher-order smooth case}\label{33}

In this paragraph we study higher-order smooth functions $\varphi$ functions satisfying so called generalized Hölder condition with parameter $\beta > 2$ (see inequality \eqref{Hölder-condition} below). 

\subsubsection{Higher order smoothness}

Let $l$ denote maximal integer number strictly less than $\beta$. Let ${\cal F}_{\beta}(L_\beta)$ denote the set of all functions $\varphi: \mathbb{R}^n \rightarrow \mathbb{R}$ which are differentiable $l$ times and for all $z,z_0\in U_{\varepsilon_0} (\mathcal{Z})$ satisfy Hölder condition:
\begin{equation}\label{Hölder-condition}
    \Biggl| \varphi(z) - \sum_{0\leq|m|\leq l} \dfrac{1}{m!} D^m \varphi(z_0) (z-z_0)^m \Biggr| \leq L_\beta\|z-z_0\|^{\beta},
\end{equation}
where $L_\beta>0$, the sum is over multi-index $m=(m_1, \dots, m_n) \in \mathbb{N}^n$, we use the notation $m! = m_1! \cdot \dots \cdot m_n!$, $|m| = m_1 + \dots + m_n$ and we defined
\begin{equation*}
     D^m \varphi(z_0) z^m = \dfrac{\partial^{|m|} \varphi(z_0)}{\partial^{m_1} z_1  \dots \partial^{m^n} z_n} z_1^{m_1} \cdot \dots \cdot z_n^{m_n}, \; \forall z=(z_1, \dots, z_n) \in \R^n.
\end{equation*}

Let ${\cal F}_{\mu, \beta}(L_\beta)$ denote the set of $\mu$-strongly-convex-strongly-concave functions $\varphi \in {\cal F}_{\beta}(L_\beta)$.

To use the higher-order smoothness we propose smoothing kernel though this is not the only way. We propose to use Algorithm \ref{algo1} which uses the kernel smoothing technique.
In fact the Algorithm \ref{algo1} arises from Algorithm \ref{alg1} in the Euclidean setting ($V_z(w) = \frac{1}{2} \| z-w\|_2^2$).

\begin{algorithm}[H]
\caption{Zero-order Stochastic Projected Gradient} \label{algo1}
\begin{algorithmic}
\State 
\noindent {\bf Requires: } Kernel $K: [-1, 1] \rightarrow \mathbb{R}$, step size $\gamma_k>0$, parameters $\tau_k$.

\State
{\bf Initialization: } Generate scalars $r_1, \dots, r_N$ uniformly on $[-1,1]$ and vectors $e_1, \dots, e_N$ uniformly on the Euclidean unit sphere $S_n=\{e\in \mathbb{R}^n: \, \|e\|=1 \}$.
\For{$k=1, \dots, N$}{
 \begin{enumerate}
     \item $\widetilde{\varphi}_k^+ := \varphi(z_k+\tau_k r_k e_k) + \xi^+_k$, $\widetilde{\varphi}_k^- := \varphi(z_k-\tau_k r_k e_k) + \xi^-_k$
     \item Define $\widetilde{g_k}:= \frac{n}{2\tau_k}(\widetilde{\varphi}_k^+-\widetilde{\varphi}_k^-)\left(
    \begin{array}{c}
    (\mathbf{e}_k)_x\\
    -(\mathbf{e}_k)_y \\
    \end{array}
    \right)K(r_k)$
     \item Update $z_{k+1} := \Pi_{Q}(z_k-\gamma_k\widetilde{g_k})$
 \end{enumerate}
}
\EndFor
\State 
\noindent {\bf Output:} $\left\{z_k\right\}_{k=1}^N$.
\end{algorithmic}
\end{algorithm}

To use the higher-order smoothness we propose we need to introduce additional noise assumption:

\begin{assumption}\label{noise-ass}
    For all $k = 1, 2, \dots, N$ it holds that
    \begin{enumerate}
        \item\label{i} $\E[\xi_k^{+2}] \leq \sigma^2$ and $\E[\xi_k^{-2}] \leq \sigma^2$ where $\sigma \geq 0$;
        \item\label{ii} the random variables $\xi^+_k$ and $\xi^-_k$ are independent from $e_k$ and $r_k$, the random variables $e_k$ and $r_k$ are independent.
    \end{enumerate}
\end{assumption} 

In other words we assume that $\delta(x,y)$ in \eqref{PrSt1} is equal to zero. 
We do not assume here neither zero-mean of $\xi^+_k$ and $\xi^-_k$ nor i.i.d of $\{\xi^+_k\}_{k=1}^{N}$ and $\{\xi^-_k\}_{k=1}^{N}$ as item \ref{ii} from Assumption \ref{noise-ass} allows to avoid that.

\subsubsection{Kernel}
For gradient estimator $\widetilde{g_k}$ we use the kernel 
\begin{equation*}
    K: [-1, 1] \rightarrow \mathbb{R},
\end{equation*}
satisfying
\begin{equation}\label{kernel-properties}
    \E[K(r)] = 0, \,
    \E[rK(r)] = 1, \, 
    \E [r^j K(r)] = 0,\, j=2, \dots, l, \,
    \E\left[ |r|^{\beta}|K(r)|\right] \leq \infty,
\end{equation}
where $r$ is a uniformly distributed on $[-1, 1]$ random variable. This helps us to get better bounds on the gradient bias $\| \widetilde{g_k} - \nabla f(x_k) \|$ (see Theorem \ref{theorem_highorder_smooth_strongly_convex} for details). The examples of possible kernels are presented in Appendix \ref{appendix:kernels}.

For Theorem \ref{theorem_highorder_smooth_strongly_convex} and Theorem \ref{theorem_highorder_smooth}  we need to introduce the constants
\begin{equation}\label{kappa-beta}
    \kappa_{\beta} = \int|u|^{\beta}|K(u)|\,du
\end{equation}
and 
\begin{equation}\label{kappa-squared}
    \kappa =\int K^2(u) \, du.
\end{equation}
It is proved in \cite{bach2016highly} that $\kappa_{\beta}$ and $\kappa$ do not depend on $n$, they depend only on $\beta$: 
\begin{equation}\label{kappa-beta-bound}
    \kappa_{\beta} \leq 2\sqrt{2} (\beta - 1),
\end{equation}
\begin{equation}\label{kappa-squared-bound}
    \kappa \leq \sqrt{3} \beta^{\nicefrac{3}{2}}.
\end{equation}

\begin{theorem} \label{theorem_highorder_smooth_strongly_convex}
Let $\varphi \in {\cal F}_{\mu, \beta}(L)$ with $\mu$, $L > 0$ and $\beta > 2$. 
Let Assumption \ref{noise-ass} hold 
and let $\mathcal{Z}$ be a convex compact subset of $\R^n$.
Let $\varphi$ be $M$-Lipschitz on the Euclidean $\tau_1$-neighborhood of $\mathcal{Z}$ (see $\tau_k$ below). 

Then the rate of convergence is given by Algorithm \ref{algo1} with parameters
\begin{equation*}
    \tau_k = \left(\dfrac{3\kappa\sigma^2n}{2(\beta-1)(\kappa_\beta L)^2}\right)^{\frac{1}{2\beta}}k^{-\frac{1}{2\beta}}, \quad \alpha_k=\dfrac{2}{\mu k}, \quad k =1,\dots, N
\end{equation*}
satisfies 
\begin{equation*}
\begin{split}
    \E \left[ \varphi(\overline{x}_N, y^*) - \varphi(x^*, \overline{y}_N) \right] &\leq \max_{y\in \mathcal{Y}} \E \left[\varphi(\overline{x}_N, y)  \right] -
    \min_{x\in \mathcal{X}}\E \left[\varphi(x, \overline{y}_N) \right] \\
    &\leq \dfrac{1}{\mu} \left(n^{2-\frac{1}{\beta}}\dfrac{A_1}{N^{\frac{\beta-1}{\beta}}}+A_2\dfrac{n(1+\ln N)}{N} \right),
\end{split}
\end{equation*}
where $\overline{z}_N = \frac{1}{N}\sum\limits_{k=1}^N z_k$, $A_1=3\beta(\kappa \sigma^2)^{\frac{\beta-1}{\beta}}(\kappa_{\beta}L)^{\frac{2}{\beta}}$, $A_2=9\kappa G^2$, $\kappa_{\beta}$ and $\kappa$ are constants depending only on $\beta$, see \eqref{kappa-beta} and \eqref{kappa-squared}.
\end{theorem}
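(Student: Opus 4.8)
The plan is to treat $\widetilde{g_k}$ as a \emph{biased} stochastic oracle for the saddle operator $\tilde\nabla\varphi(z)=(\nabla_x\varphi,-\nabla_y\varphi)$, run the classical strongly-monotone analysis of projected stochastic descent, and then split the residual into a bias part (annihilated up to order $\tau^{\beta-1}$ by the kernel moments and the H\"older condition \eqref{Hölder-condition}) and a second-moment part (governed by $\sigma$, $M$ and $\kappa$). First I would expand the Euclidean projection step $z_{k+1}=\Pi_{\mathcal Z}(z_k-\gamma_k\widetilde{g_k})$ into the non-expansiveness inequality $\|z_{k+1}-z\|_2^2\le\|z_k-z\|_2^2-2\gamma_k\langle\widetilde{g_k},z_k-z\rangle+\gamma_k^2\|\widetilde{g_k}\|_2^2$, valid for every fixed $z=(x,y)\in\mathcal Z$. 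Taking $\EE[\,\cdot\mid z_k]$ and writing $\EE[\widetilde{g_k}\mid z_k]=\tilde\nabla\varphi(z_k)+b_k$ isolates the bias $b_k$.

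The next ingredient is the strong-convexity--strong-concavity bound from Assumption 3, applied block-wise:
\[
\langle\tilde\nabla\varphi(z_k),z_k-z\rangle\ \ge\ \big(\varphi(x_k,y)-\varphi(x,y_k)\big)+\tfrac{\mu}{2}\|z_k-z\|_2^2 .
\]
I would spend one half of the $\tfrac{\mu}{2}\|z_k-z\|_2^2$ term, via Young's inequality, to absorb $-2\gamma_k\langle b_k,z_k-z\rangle$; this leaves the factor $(1-\tfrac{\gamma_k\mu}{2})$ in front of $\|z_k-z\|_2^2$ and charges only $\tfrac{2\gamma_k}{\mu}\|b_k\|_2^2$ to the error. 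With $\gamma_k=\alpha_k=\tfrac{2}{\mu k}$ this factor equals $\tfrac{k-1}{k}$, so after dividing by $2\gamma_k$ the distance terms form the telescoping pair $\tfrac{\mu(k-1)}{4}\|z_k-z\|_2^2-\tfrac{\mu k}{4}\|z_{k+1}-z\|_2^2$. Summing over $k=1,\dots,N$ collapses these to $-\tfrac{\mu N}{4}\|z_{N+1}-z\|_2^2\le 0$, which I discard, obtaining
\[
\sum_{k=1}^{N}\big(\varphi(x_k,y)-\varphi(x,y_k)\big)\ \le\ \tfrac{1}{\mu}\sum_{k=1}^{N}\EE\|b_k\|_2^2+\tfrac12\sum_{k=1}^{N}\gamma_k\,\EE\|\widetilde{g_k}\|_2^2 .
\]
Dividing by $N$ and using convexity of $\varphi(\cdot,y)$ and concavity of $\varphi(x,\cdot)$ (Jensen) turns the left side into $\varphi(\overline x_N,y)-\varphi(x,\overline y_N)$; since the right side is independent of $z$, taking $\max_y$ and $\min_x$ yields the claimed max--min bound, and the first inequality of the statement is then immediate.

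It remains to supply the two oracle estimates. For the bias I would Taylor-expand $\varphi$ to order $l$ about $z_k$; the moment conditions $\EE[rK(r)]=1$, $\EE[r^jK(r)]=0$ for $j=2,\dots,l$ together with the sphere identity $\EE_{\mathbf e}[\langle\nabla\varphi,\mathbf e\rangle\,\tilde{\mathbf e}]=\tfrac1n\tilde\nabla\varphi(z_k)$ annihilate every polynomial term except the gradient, so only the order-$\beta$ H\"older remainder from \eqref{Hölder-condition} survives, giving $\EE\|b_k\|_2^2\lesssim n(\kappa_\beta L)^2\tau_k^{2(\beta-1)}$. For the second moment I would use $\|\mathbf e\|_2=1$, the independence of $\xi_k^{\pm}$ from $(\mathbf e_k,r_k)$ (Assumption~\ref{noise-ass}) with $\EE[K^2(r)]=\tfrac{\kappa}{2}$ for the noise contribution, and the sphere identity $\EE_{\mathbf e}[\langle\nabla\varphi,\mathbf e\rangle^2]=\tfrac{\|\nabla\varphi\|_2^2}{n}$ for the signal contribution, to get $\EE\|\widetilde{g_k}\|_2^2\lesssim\tfrac{n^2\kappa\sigma^2}{\tau_k^2}+n\kappa G^2$ with $G$ the Lipschitz bound $M$. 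These are the saddle-point analogues of the kernel lemmas of \cite{akhavan2020exploiting,novitskii2021improved}, the only new bookkeeping being the sign-flip in the $y$-block of $\tilde{\mathbf e}$, which is exactly what turns $\nabla\varphi$ into $\tilde\nabla\varphi$.

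Finally I substitute $\tau_k=c_\tau k^{-1/(2\beta)}$. The bias sum $\sum_k\tau_k^{2(\beta-1)}$ and the $\sigma^2$-part of the variance sum $\sum_k\tfrac{1}{k\tau_k^2}$ both reduce to $\sum_k k^{1/\beta-1}\asymp\beta N^{1/\beta}$, whereas the $G^2$-part gives $\sum_k\tfrac1k\asymp 1+\ln N$; after dividing by $N$ these produce the denominators $N^{(\beta-1)/\beta}$ and $N$ respectively. Choosing $c_\tau$ to equalize the bias and the $\sigma^2$-variance (the first-order condition $(\beta-1)(\kappa_\beta L)^2 c_\tau^{2\beta}=\tfrac32 n\kappa\sigma^2$) reproduces exactly the prescribed $\tau_k$ and collapses those two terms into $n^{2-1/\beta}A_1N^{-(\beta-1)/\beta}$ with $A_1=3\beta(\kappa\sigma^2)^{(\beta-1)/\beta}(\kappa_\beta L)^{2/\beta}$, while the remaining term is $A_2\,n(1+\ln N)/N$ with $A_2=9\kappa G^2$, both carrying the overall $\tfrac1\mu$. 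I expect the genuinely hard step to be the bias estimate: extracting the clean $\sqrt n\,\kappa_\beta L\,\tau_k^{\beta-1}$ scaling -- which is what forces the exponent $2-\tfrac1\beta$ and the exact form of $\tau_k$ -- requires careful control of the multi-index Taylor remainder against \eqref{Hölder-condition} on the $\tau_1$-neighborhood of $\mathcal Z$; the summation and tuning are then routine.
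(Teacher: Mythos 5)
Your outline reconstructs the paper's own proof essentially step for step: the same projection inequality combined with the block-wise strong convexity--concavity bound, the same Young-type absorption of the bias term against half of the retained $\mu$-term, the same step size $\gamma_k=\tfrac{2}{\mu k}$ producing the telescoping pair $\tfrac{\mu(k-1)}{4}\rho_k^2-\tfrac{\mu k}{4}\rho_{k+1}^2$, the same kernel-moment/H\"older-remainder argument giving the bias scaling $\sqrt{n}\,\kappa_\beta L\,\tau_k^{\beta-1}$, and the same optimization of $\tau_k$ and summation estimates $\sum_k k^{-1+1/\beta}\le \beta N^{1/\beta}$, $\sum_k k^{-1}\le 1+\ln N$.

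The one step where your stated tools do not deliver what you claim is the second-moment bound. You propose to handle the ``signal'' part by linearizing, i.e.\ replacing $\varphi(z+\tau r e)-\varphi(z-\tau r e)$ by $2\tau r\langle\nabla\varphi(z),e\rangle$ and invoking $\E_e\left[\langle\nabla\varphi,e\rangle^2\right]=\|\nabla\varphi\|_2^2/n$. But the sphere identity applies only to the linear term; the remainder of the difference is of order $L\tau^2$ (or $L\tau^\beta$ for $2<\beta\le 3$), and after multiplication by $\tfrac{n^2}{4\tau^2}K^2(r)$ it contributes an extra term of order $\kappa n^2L^2\tau_k^{2}$ (resp.\ $\kappa n^2 L^2 \tau_k^{2(\beta-1)}$) to $\E\left[\|\widetilde{g_k}\|^2\right]$, which with the prescribed $\tau_k\asymp (n/k)^{1/(2\beta)}$ leaves a residual term in the final bound whose power of $n$ exceeds $2-\tfrac1\beta$; it is absorbed by neither the $A_1$- nor the $A_2$-term, so the theorem's clean constants (in particular $A_2=9\kappa M^2$) do not come out. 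The trivial alternative, bounding the whole difference by $2M\tau|r|$ via Lipschitzness, loses the crucial factor $\tfrac1n$ and gives $n^2\kappa M^2$ instead of $n\kappa M^2$. The paper threads this needle without linearizing: it centers $\varphi(z\pm\tau r e)$ at their common mean and applies the concentration inequality for Lipschitz functions on the sphere (Lemma \ref{lem:lemma_9_shamir}), obtaining $\E_e\left[\left(\varphi(z+\tau re)-\varphi(z-\tau re)\right)^2\right]\le \tfrac{12\tau^2M^2}{n}$ with no remainder. Substituting that concentration argument for your linearization closes the gap; everything else in your proposal matches the paper's proof.
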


We emphasize that the usage of kernel smoothing technique, measure concentration inequalities and the assumption that $\xi_k$ is independent from $e_k$ or $r_k$ (Assumption \ref{noise-ass}) lead to the results better  than the state-of-the-art ones for $\beta > 2$. The last assumption also allows us not to assume neither zero-mean of $\xi^+_k$ and $\xi^-_k$ nor i.i.d of $\{\xi^+_k\}_{k=1}^{N}$ and $\{\xi^-_k\}_{k=1}^{N}$.

\begin{theorem} \label{theorem_highorder_smooth}

Let $\varphi \in {\cal F}_{\beta}(L)$ with $L > 0$ and $\beta > 2$. 
Let Assumption \ref{noise-ass} hold 
and let $\cal{Z}$ be a convex compact subset of $\R^n$.
Let $\varphi$ be $M$-Lipschitz on the Euclidean $\tau_1$-neighborhood of $\cal{Z}$ ($\tau_k$ is parameter from Theorem \ref{theorem_highorder_smooth_strongly_convex} for the regularized function $\varphi_\mu(z)$ whose  description is given below). Let $\overline{z}_N$ denote $\frac{1}{N}\sum\limits_{k=1}^N z_k$.

Let's define $N(\varepsilon)$:

\begin{equation*}
    N(\varepsilon)=\max\left\{ \left(R\sqrt{2A_1}\right)^{\frac{2\beta}{\beta-1}}\dfrac{n^{2+\frac{1}{\beta-1}}}{\varepsilon^{2+\frac{2}{\beta-1}}},\left(R\sqrt{2c'A_2}\right)^{2(1+\rho)}\dfrac{n^{1+\rho}}{\varepsilon^{2(1+\rho)}}\right\},
\end{equation*}
where  $A_1=3\beta(\kappa \sigma^2)^{\frac{\beta-1}{\beta}}(\kappa_{\beta}L)^{\frac{2}{\beta}}$, $A_2=9\kappa G^2$ -- constants from Theorem \ref{theorem_highorder_smooth_strongly_convex}, $\rho > 0$ -- arbitrarily small positive number, $c'$ -- constant which depends on $\rho$. 

Then the rate of convergence is given by the following expression:
\begin{equation}
    \E \left[ \varphi(\overline{x}_N, y^*) - \varphi(x^*, \overline{y}_N) \right] \leq \max_{y\in \mathcal{Y}} \E \left[\varphi(\overline{x}_N, y)  \right] -
    \min_{x\in \mathcal{X}}\E \left[\varphi(x, \overline{y}_N) \right] \leq \varepsilon
\end{equation}
after $N(\varepsilon)$ steps of Algorithm \ref{algo1} with settings from Theorem \ref{theorem_highorder_smooth_strongly_convex} for the regularized function: $\varphi_{\mu}(z):=\varphi(z)+\frac{\mu}{2} \| x-x_0 \|^2-\frac{\mu}{2} \| y - y_0 \|^2$, where $\mu \leq \frac{\varepsilon}{R^2}$, $R=\|z_0-z^*\|$, $z_0 \in \mathcal{Z}$ -- arbitrary point.

\end{theorem}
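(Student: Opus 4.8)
The plan is to reduce the convex-concave problem to the strongly-convex-strongly-concave setting already handled by Theorem \ref{theorem_highorder_smooth_strongly_convex}, run that theorem on the regularized objective, and then pay back the regularization bias. Concretely, I would work with
\begin{equation*}
    \varphi_{\mu}(z) = \varphi(z) + \frac{\mu}{2}\|x - x_0\|^2 - \frac{\mu}{2}\|y - y_0\|^2, \qquad \mu \le \frac{\varepsilon}{R^2},
\end{equation*}
and first verify that $\varphi_\mu \in {\cal F}_{\mu,\beta}(L)$ with the \emph{same} Hölder constant $L$. This is the first point requiring care: since $\beta > 2$ we have $l \ge 2$, so the added quadratic is a polynomial of degree at most $l$ and is reproduced exactly by its order-$l$ Taylor expansion; hence the bracketed residual in \eqref{Hölder-condition} is unchanged, while the added quadratic makes $\varphi_\mu$ exactly $\mu$-strongly-convex in $x$ and $\mu$-strongly-concave in $y$. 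I would also note that on the $\tau_1$-neighborhood of $\mathcal{Z}$ the Lipschitz constant of $\varphi_\mu$ exceeds $M$ only by $O(\mu\Omega)$, which is harmless for the small $\mu$ chosen below, so all hypotheses of Theorem \ref{theorem_highorder_smooth_strongly_convex} hold for $\varphi_\mu$.

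Applying Theorem \ref{theorem_highorder_smooth_strongly_convex} to $\varphi_\mu$ with the prescribed $\tau_k$ and $\alpha_k$ yields
\begin{equation*}
    \max_{y\in\mathcal{Y}} \E[\varphi_\mu(\overline{x}_N, y)] - \min_{x\in\mathcal{X}} \E[\varphi_\mu(x, \overline{y}_N)] \le \frac{1}{\mu}\left( n^{2-\frac1\beta}\frac{A_1}{N^{\frac{\beta-1}{\beta}}} + A_2\frac{n(1+\ln N)}{N}\right).
\end{equation*}
To transfer this to $\varphi$, I would write $\varphi = \varphi_\mu - \frac{\mu}{2}\|x-x_0\|^2 + \frac{\mu}{2}\|y-y_0\|^2$ and discard the two sign-favorable quadratics, so that for the saddle point $z^* = (x^*, y^*)$ of $\varphi$
\begin{equation*}
    \E\big[\varphi(\overline{x}_N, y^*) - \varphi(x^*, \overline{y}_N)\big] \le \Big(\max_{y} \E[\varphi_\mu(\overline{x}_N, y)] - \min_{x} \E[\varphi_\mu(x, \overline{y}_N)]\Big) + \frac{\mu}{2}\big(\|y^*-y_0\|^2 + \|x^*-x_0\|^2\big),
\end{equation*}
where the last bracket equals $\|z^*-z_0\|^2 = R^2$; the identical manipulation bounds the full duality gap displayed in the statement once $R$ is read as a bound on the diameter. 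With $\mu \le \varepsilon/R^2$ the regularization error $\frac{\mu R^2}{2}$ is then at most $\varepsilon/2$.

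It remains to force the Theorem \ref{theorem_highorder_smooth_strongly_convex} bound below $\varepsilon/2$. Substituting $\mu = \varepsilon/R^2$ and treating the two summands separately, the first (power-law) term inverts cleanly: demanding it be of order $\varepsilon^2/R^2$ and simplifying the exponents via $(2-\frac1\beta)\frac{\beta}{\beta-1} = 2+\frac1{\beta-1}$ and $2\cdot\frac{\beta}{\beta-1} = 2+\frac2{\beta-1}$ reproduces the first threshold $(R\sqrt{2A_1})^{\frac{2\beta}{\beta-1}}\, n^{2+\frac1{\beta-1}}/\varepsilon^{2+\frac2{\beta-1}}$ in $N(\varepsilon)$. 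I expect the genuine obstacle to be the second summand, which carries the factor $1+\ln N$ and hence cannot be inverted in closed form; this is precisely the role of $\rho$ and $c'$ in the statement. Bounding $1+\ln N \le c'\,N^{\rho}$ for an arbitrarily small $\rho>0$ (with $c'$ depending on $\rho$) turns the corresponding condition into a pure power of $N$, whose inversion—after absorbing the exponent $1/(1-\rho)$ into a renamed $\rho$—yields the second threshold $(R\sqrt{2c'A_2})^{2(1+\rho)}\, n^{1+\rho}/\varepsilon^{2(1+\rho)}$. Taking $N = N(\varepsilon)$, the maximum of the two thresholds, makes the Theorem \ref{theorem_highorder_smooth_strongly_convex} bound small enough that, together with the regularization error, the duality gap of $\varphi$ is at most $\varepsilon$, which proves the claim.
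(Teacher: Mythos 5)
Your proposal is correct and follows essentially the same route as the paper's proof: regularize $\varphi$ to get $\varphi_\mu$, apply Theorem \ref{theorem_highorder_smooth_strongly_convex} to it, absorb the regularization bias $\frac{\mu}{2}R^2 \le \frac{\varepsilon}{2}$, and invert the resulting bound, handling the $1+\ln N$ factor via $1+\ln N \le c' N^{\rho}$ with arbitrarily small $\rho$ (the paper writes this as $c'N^{\frac{\rho}{\rho+1}}$, a cosmetic reparametrization of your renaming step). Your explicit check that $\varphi_\mu \in {\cal F}_{\mu,\beta}(L)$ with the same Hölder constant (the quadratic being killed by the order-$l$ Taylor expansion since $l \ge 2$), and your remark that $R$ must effectively bound the distance from $z_0$ to the maximizing point for the full duality-gap claim, are details the paper glosses over, and they strengthen rather than alter the argument.
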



\section{Experiments}

In our experiments we consider the classical bilinear problem on a probability simplex:

\begin{eqnarray}
\label{exp_pr_4}
\min_{x\in \Delta_n}\max_{y\in \Delta_k} \left[  y^T Cx\right],
\end{eqnarray}

This problem has many different applications and interpretations, one of the main ones is a matrix game (see Part 5 in \cite{nemirovski}), i.e. the element $c_{ij}$ of the matrix are interpreted as a winning, provided that player $X$ has chosen the $i$th strategy and player $Y$ has chosen the $j$th strategy, the task of one of the players is to maximize the gain, and the opponent’s task -- to minimize.

The step of our algorithms can be written as follows (see \cite{aleks2020gradientfree}):
\begin{eqnarray*}
[x_{k+1}]_i = \frac{[x_k]_i \exp(-\gamma_k [g_x]_i)}{\sum\limits_{j=1}^n [x_k]_j \exp(-\gamma_k [g_x]_j)}, ~~~~
[y_{k+1}]_i = \frac{[y_k]_i \exp(\gamma_k [g_y]_i)}{\sum\limits_{j=1}^n [y_k]_j \exp(\gamma_k [g_y]_j)},
\end{eqnarray*}
where under $g_x, g_y$ we mean parts of $g$ which are responsible for $x$ and for $y$. Note that we do not present a generalization of Algorithm \ref{algo1} in an arbitrary Bregman setup, but we want to check in practice. 

We take matrix $50 \times 50$. All elements of the matrix are generated from the uniform distribution from 0 to 1. Next, we select one row of the matrix and generate its elements from the uniform from 5 to 10. Finally, we take one element from this row and generate it uniformly from 1 to 5. Finally, the matrix is normalized. Further, with each call of the function value $y^T Cx$ we add stochastic noise with constant variance (which is on average 5\% or 10\% of the function value). 

The main goal of our experiments is to compare three gradient-free approaches: Algorithm \ref{alg1} with \eqref{eq:PrSt3} and \eqref{eq:PrSt5} approximations, as well as Algorithm \ref{algo1}. We also added a first order method for comparison. Parameters $\gamma$ and $\tau$ are selected with the help of grid-search so that the convergence is the fastest, but stable. 
See Figure \ref{fig:1} for results.
\begin{figure}[h!]
\centering
\begin{minipage}{0.48\textwidth}
\includegraphics[width =  \textwidth]{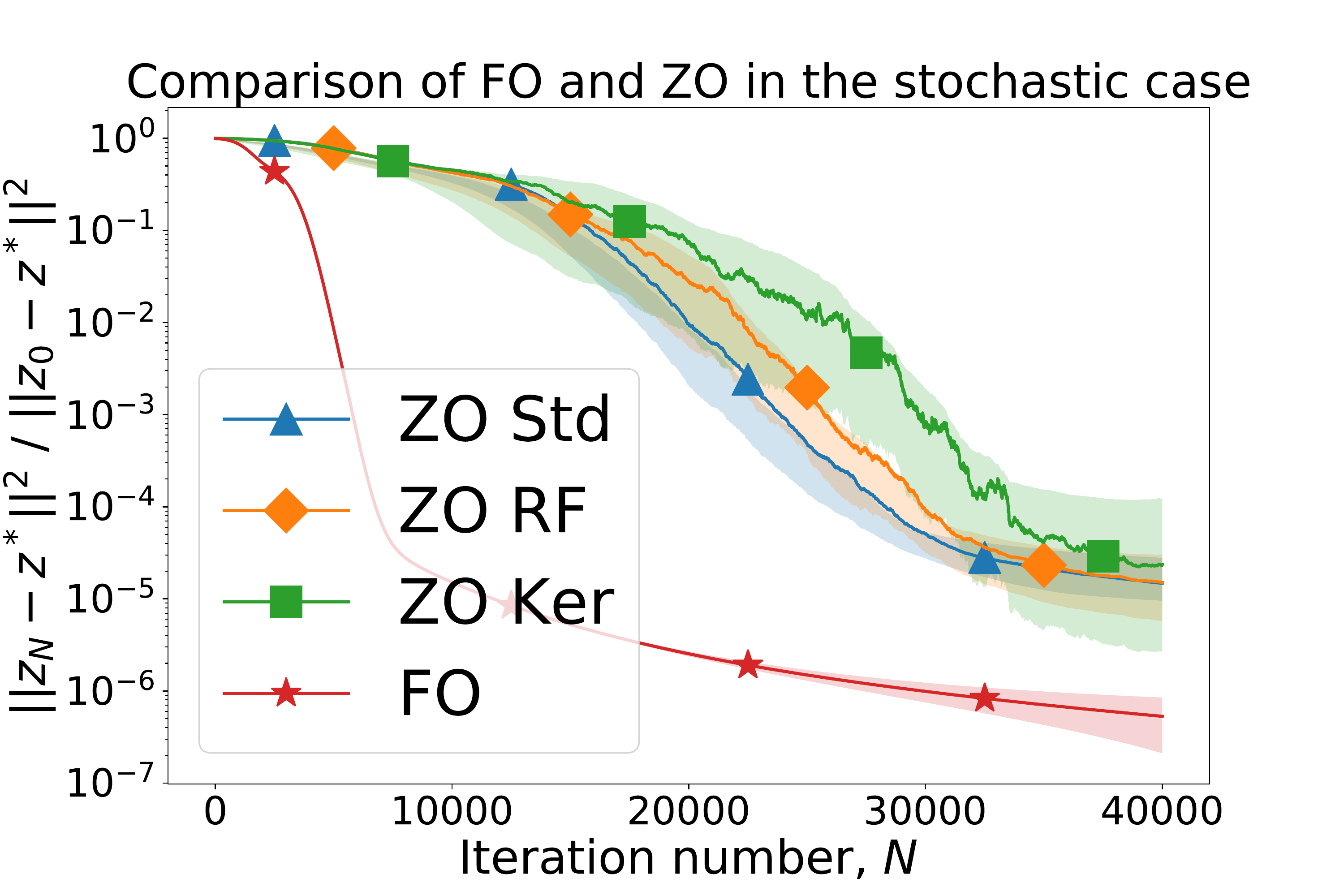}
\end{minipage}%
\begin{minipage}{0.48\textwidth}
\includegraphics[width =  \textwidth]{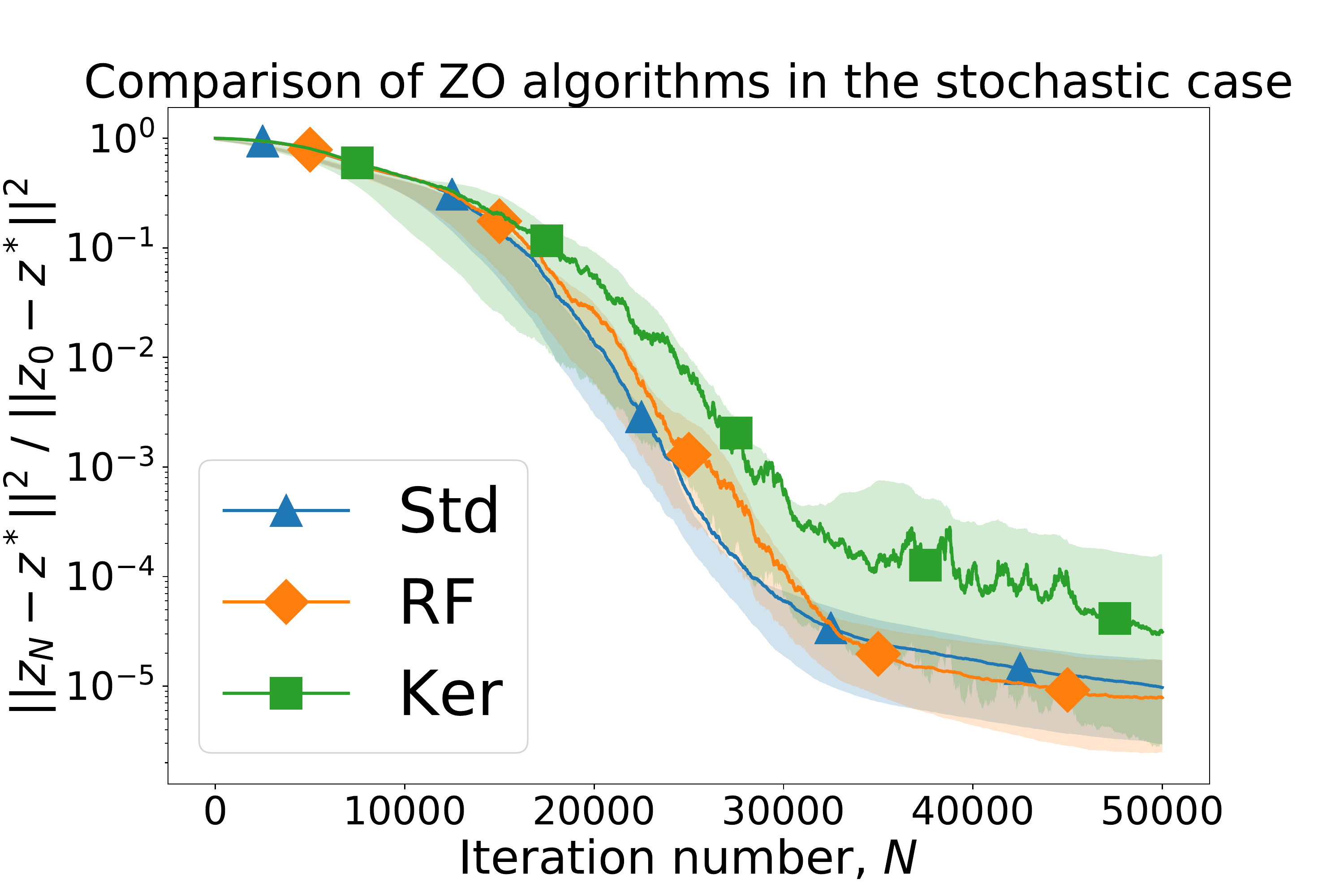}
\end{minipage}%
\\
\begin{minipage}{0.5\textwidth}
\centering
~~~(a) noise 5\%
\end{minipage}%
\begin{minipage}{0.5\textwidth}
\centering
~~~~~(b) noise 10\%
\end{minipage}%
\caption{Algorithm \ref{alg1} with \eqref{eq:PrSt3} (ZO Std) and \eqref{eq:PrSt5} (ZO RF) approximations, Algorithm \ref{algo1} (ZO Ker) and Mirror Descent (FO) applied to solve saddle-problem \eqref{exp_pr_4} with noise level: (a) 5\%, (b) 10\%.}
\label{fig:1}
\end{figure}

Based on the results of the experiments, we note that the gradient-free methods converge more slowly than the first-order method -- which is predictable. The convergence of zeroth-order methods is approximately the same, the only thing that can be noted is that the method with a kernel is subject to larger fluctuations.

\bibliographystyle{splncs04}
\bibliography{ltr}

\begin{thebibliography}{10}
\providecommand{\url}[1]{\texttt{#1}}
\providecommand{\urlprefix}{URL }
\providecommand{\doi}[1]{https://doi.org/#1}

\bibitem{akhavan2020exploiting}
Akhavan, A., Pontil, M., Tsybakov, A.B.: Exploiting higher order smoothness in
  derivative-free optimization and continuous bandits. arXiv preprint
  arXiv:2006.07862  (2020)

\bibitem{bach2016highly}
Bach, F., Perchet, V.: Highly-smooth zero-th order online optimization. In:
  Conference on Learning Theory. pp. 257--283. PMLR (2016)

\bibitem{nemirovski}
Ben-Tal, A., Nemirovski, A.: Lectures on Modern Convex Optimization: Analysis,
  Algorithms, and Engineering Applications (2019)

\bibitem{Beznosikov}
Beznosikov, A., Gorbunov, E., Gasnikov, A.: Derivative-free method for
  composite optimization with applications to decentralized distributed
  optimization. arXiv preprint arXiv:1911.10645  (2019)

\bibitem{aleks2020gradientfree}
Beznosikov, A., Sadiev, A., Gasnikov, A.: Gradient-free methods for
  saddle-point problem. arXiv preprint arXiv:2005.05913  (2020)

\bibitem{carmon2020coordinate}
Carmon, Y., Jin, Y., Sidford, A., Tian, K.: Coordinate methods for matrix
  games. arXiv preprint arXiv:2009.08447  (2020)

\bibitem{Chen_2017}
Chen, P.Y., Zhang, H., Sharma, Y., Yi, J., Hsieh, C.J.: Zoo. Proceedings of the
  10th ACM Workshop on Artificial Intelligence and Security - AISec ’17
  (2017). \doi{10.1145/3128572.3140448},
  \url{http://dx.doi.org/10.1145/3128572.3140448}

\bibitem{duchi2013optimal}
Duchi, J.C., Jordan, M.I., Wainwright, M.J., Wibisono, A.: Optimal rates for
  zero-order convex optimization: the power of two function evaluations. arXiv
  preprint arXiv:1312.2139  (2013)

\bibitem{fazel2018global}
Fazel, M., Ge, R., Kakade, S., Mesbahi, M.: Global convergence of policy
  gradient methods for the linear quadratic regulator. In: International
  Conference on Machine Learning. pp. 1467--1476. PMLR (2018)

\bibitem{gasnikov2017stochastic}
Gasnikov, A.V., Krymova, E.A., Lagunovskaya, A.A., Usmanova, I.N., Fedorenko,
  F.A.: Stochastic online optimization. single-point and multi-point non-linear
  multi-armed bandits. convex and strongly-convex case. Automation and remote
  control  \textbf{78}(2),  224--234 (2017)

\bibitem{goodfellow2016nips}
Goodfellow, I.: Nips 2016 tutorial: Generative adversarial networks. arXiv
  preprint arXiv:1701.00160  (2016)

\bibitem{pmlr-v119-jin20f}
Jin, Y., Sidford, A.: Efficiently solving {MDP}s with stochastic mirror
  descent. In: III, H.D., Singh, A. (eds.) Proceedings of the 37th
  International Conference on Machine Learning. Proceedings of Machine Learning
  Research, vol.~119, pp. 4890--4900. PMLR (13--18 Jul 2020)

\bibitem{Nesterov}
Nesterov, Y., Spokoiny, V.G.: Random gradient-free minimization of convex
  functions. Foundations of Computational Mathematics  \textbf{17}(2),
  527--566 (2017)

\bibitem{novitskii2021improved}
Novitskii, V., Gasnikov, A.: Improved exploiting higher order smoothness in
  derivative-free optimization and continuous bandit. arXiv preprint
  arXiv:2101.03821  (2021)

\bibitem{sadiev2020zeroth}
Sadiev, A., Beznosikov, A., Dvurechensky, P., Gasnikov, A.: Zeroth-order
  algorithms for smooth saddle-point problems. arXiv preprint arXiv:2009.09908
  (2020)

\bibitem{Shamir15}
Shamir, O.: An optimal algorithm for bandit and zero-order convex optimization
  with two-point feedback. Journal of Machine Learning Research
  \textbf{18}(52),  1--11 (2017)

\bibitem{zhang2020improving}
Zhang, Y., Zhou, Y., Ji, K., Zavlanos, M.M.: Improving the convergence rate of
  one-point zeroth-order optimization using residual feedback. arXiv preprint
  arXiv:2006.10820  (2020)

\end{thebibliography}

\appendix

\section{General facts}

\begin{lemma}[see inequality 5.3.18 from \cite{nemirovski}]
Let $d(z): \mathcal{Z} \to \mathbb{R}$ is prox-function and $V_z(w)$ define Bregman divergence  associated with $d(z)$.
The following equation holds for $x,y,u \in X$:
\begin{eqnarray}
\label{temp111}
\langle \nabla d(x) - \nabla d(y), u -x \rangle = V_y(u)-V_x(u)-V_y(x).
\end{eqnarray}
\end{lemma}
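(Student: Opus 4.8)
The plan is to prove this identity by a completely elementary, purely algebraic expansion: substitute the defining formula of the Bregman divergence into each of the three terms on the right-hand side and check that the result collapses to the inner product on the left. No convexity, no smoothness of $d$ beyond differentiability, and no optimality condition is needed here --- the statement is a formal consequence of the definition $V_z(w) = d(z) - d(w) - \langle \nabla d(w), z - w \rangle$ alone.

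First I would write out the three divergences appearing on the right-hand side, namely $V_y(u)$, $V_x(u)$ and $V_y(x)$, each split into its value-difference part and its linear (gradient) part. Forming the signed combination $V_y(u) - V_x(u) - V_y(x)$, I expect the scalar contributions $d(x)$, $d(y)$, $d(u)$ to each occur twice with opposite signs, so that every bare value of $d$ cancels. This reduces the right-hand side to a sum of inner products of the form $\langle \nabla d(\cdot), \cdot \rangle$.

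The remaining step is to regroup these inner products. Using bilinearity of $\langle \cdot, \cdot \rangle$, I would collect the factor multiplying each surviving gradient; the terms then recombine into $\langle \nabla d(x) - \nabla d(y), u - x \rangle$, which is exactly the left-hand side. Equivalently, one may read the claim as an instance of the familiar three-point identity (a ``law of cosines'') for Bregman divergences, so the same computation could be phrased once in general and specialized.

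I do not anticipate any genuine obstacle: the whole argument is bookkeeping. The only point demanding care is consistency with the argument-order convention of $V_z(w)$ --- which slot carries the gradient --- because swapping the two arguments changes which gradients survive the cancellation. Keeping the definition fixed throughout and tracking the signs carefully is all that is required.
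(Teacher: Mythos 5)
Your overall strategy---expand all three divergences from the definition and watch the bare values of $d$ cancel---is the right one, and it is essentially the only proof this identity admits; note that the paper itself gives no argument at all, only a citation to inequality 5.3.18 of \cite{nemirovski}. However, there is a genuine gap in how you dispose of the convention issue that you yourself flag. You assert that the identity is a formal consequence of the definition exactly as printed in the paper, $V_z(w) = d(z) - d(w) - \langle \nabla d(w), z - w\rangle$, where the gradient sits at the \emph{parenthesized} argument. Under that literal reading the computation does not close: the $d$-values do cancel, but the surviving linear terms are
\begin{equation*}
  -\langle \nabla d(u), y-u\rangle + \langle \nabla d(u), x-u\rangle + \langle \nabla d(x), y-x\rangle
  \;=\; \langle \nabla d(u) - \nabla d(x),\, x-y\rangle,
\end{equation*}
which contains $\nabla d(u)$ and is in general \emph{not} equal to the claimed left-hand side $\langle \nabla d(x) - \nabla d(y), u-x\rangle$. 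The two expressions happen to agree for quadratic $d$ by symmetry of the inner product, which makes the slip easy to miss; but already in one dimension with $d(t)=e^t$, $x=0$, $y=1$, $u=2$ they evaluate to $1-e^2$ and $2-2e$ respectively.

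The identity as stated holds precisely under the opposite, gradient-at-subscript convention $V_x(y) = d(y) - d(x) - \langle \nabla d(x), y-x\rangle$, and with that reading your expansion goes through verbatim: the values cancel and the linear terms regroup as $\langle \nabla d(x), u-x\rangle - \langle \nabla d(y), u-x\rangle$. This is also the convention the paper actually relies on everywhere else: it is what makes the prox three-point inequality \eqref{lemma3_1} valid and what makes the joint use of \eqref{temp111} and the prox optimality condition in the proof of Theorem 1 consistent, and it matches Nemirovski's original statement. In other words, the paper's Definition of the Bregman divergence has its two arguments swapped (a typo), and a complete proof must say so explicitly and adopt the corrected convention. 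Your closing remark that ``keeping the definition fixed throughout'' suffices is exactly what fails here: keeping the quoted definition fixed yields a true identity, but a different one from the statement being proved.
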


\begin{lemma}[Fact 5.3.2 from \cite{nemirovski}]
    Given norm $\|\cdot \|$ on space $\mathcal{Z}$ and prox-function $d(z)$, let $z \in \mathcal{Z}$, $w \in \mathbb{R}^n$ and $z_{+} = \text{prox}_z(w)$. Then for all $u \in \mathcal{Z}$
    \begin{eqnarray}
        \label{lemma3_1}
         \langle w, z_{+} - u\rangle \leqslant V_{z}(u) - V_{z_{+}}(u) - V_{z}(z_{+}).
    \end{eqnarray}
\end{lemma}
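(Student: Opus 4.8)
The plan is to assemble the claim from two ingredients that are both available once we unfold the definition of the prox-operator: the first-order optimality condition for the prox-minimization, and the three-point identity \eqref{temp111} for Bregman divergences proved earlier in the appendix. No new estimate is needed; the proof is a short combination of these two facts.

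First I would unfold the definition of the prox-operator. By definition $z_{+} = \text{prox}_z(w)$ is the minimizer over the convex set $\mathcal{Z}$ of the function $\psi(y) \eqdef V_z(y) + \la w, y\ra$. Since the prox-function $d$ is $1$-strongly convex and differentiable on $\mathcal{Z}$, the map $y \mapsto V_z(y)$ is convex and differentiable with gradient $\nabla_y V_z(y) = \nabla d(y) - \nabla d(z)$ (this is the gradient form compatible with the divergence as it enters \eqref{temp111}), so $\psi$ is convex and differentiable. The standard first-order optimality condition for minimizing a convex differentiable function over a convex set then yields, for every $u \in \mathcal{Z}$, the variational inequality $\la \nabla \psi(z_{+}), u - z_{+}\ra \geq 0$, i.e. $\la \nabla d(z_{+}) - \nabla d(z) + w, u - z_{+}\ra \geq 0$. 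Rearranging this isolates the left-hand side of the target inequality: $\la w, z_{+} - u\ra \leq \la \nabla d(z_{+}) - \nabla d(z), u - z_{+}\ra$.

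Second, I would rewrite this upper bound using the three-point identity \eqref{temp111}, applied with the substitution $x := z_{+}$ and $y := z$. That identity turns $\la \nabla d(z_{+}) - \nabla d(z), u - z_{+}\ra$ exactly into $V_z(u) - V_{z_{+}}(u) - V_z(z_{+})$. Chaining this equality with the inequality from the previous step gives $\la w, z_{+} - u\ra \leq V_z(u) - V_{z_{+}}(u) - V_z(z_{+})$, which is precisely \eqref{lemma3_1}, completing the proof.

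The one point that requires care is the use of the \emph{inequality} form of optimality rather than $\nabla \psi(z_{+}) = 0$: because $z_{+}$ may lie on the boundary of $\mathcal{Z}$, we must keep the feasible-direction condition $\la \nabla \psi(z_{+}), u - z_{+}\ra \geq 0$, and it is exactly the sign of this inequality that propagates to the correct direction of the final bound. Everything else—convexity and differentiability of $\psi$, the formula for $\nabla_y V_z(y)$, and the algebraic identity \eqref{temp111}—is either assumed in the statement or already established earlier, so the argument is purely assembly.
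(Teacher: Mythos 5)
Your proof is correct: the first-order optimality condition $\langle \nabla d(z_{+}) - \nabla d(z) + w,\, u - z_{+}\rangle \geq 0$ for the prox-minimization, chained with the three-point identity \eqref{temp111} applied at $x := z_{+}$, $y := z$, gives \eqref{lemma3_1} exactly as you assemble it, and you are right both to keep the inequality (rather than stationarity) form of optimality because $z_{+}$ may sit on the boundary of $\mathcal{Z}$, and to use the gradient convention $\nabla_y V_z(y) = \nabla d(y) - \nabla d(z)$, which is the one consistent with \eqref{temp111} (the paper's displayed definition of $V_z(w)$ has its two arguments swapped, so matching the identity was the correct disambiguation). The paper gives no proof of this lemma, citing it as Fact 5.3.2 of Nemirovski, but your two-ingredient argument is precisely the one the paper itself replays inline (prox optimality condition plus \eqref{temp111}) when bounding $D(u)$ in the proof of Theorem 1, so you are following essentially the same route.
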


\begin{lemma}
For arbitrary integer $n\ge 1$ and arbitrary set of positive numbers $a_1,\ldots,a_n$ we have
\begin{equation}
    \left(\sum\limits_{i=1}^m a_i\right)^2 \le m\sum\limits_{i=1}^m a_i^2.\label{eq:squared_sum}
\end{equation}
\end{lemma}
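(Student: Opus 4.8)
The plan is to prove the inequality $\left(\sum_{i=1}^m a_i\right)^2 \le m\sum_{i=1}^m a_i^2$ directly via the Cauchy--Schwarz inequality applied to a well-chosen pair of vectors. First I would recall that Cauchy--Schwarz states $\left(\sum_{i=1}^m u_i v_i\right)^2 \le \left(\sum_{i=1}^m u_i^2\right)\left(\sum_{i=1}^m v_i^2\right)$ for real vectors $u, v \in \R^m$. The key idea is to take $u_i = a_i$ and $v_i = 1$ for all $i$, so that the left-hand inner product becomes $\sum_{i=1}^m a_i$ and the two factors on the right become $\sum_{i=1}^m a_i^2$ and $\sum_{i=1}^m 1 = m$ respectively.

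Carrying this out, the substitution yields
\begin{equation*}
    \left(\sum_{i=1}^m a_i\right)^2 = \left(\sum_{i=1}^m a_i \cdot 1\right)^2 \le \left(\sum_{i=1}^m a_i^2\right)\left(\sum_{i=1}^m 1^2\right) = m\sum_{i=1}^m a_i^2,
\end{equation*}
which is exactly the claimed inequality. Note that the hypothesis that the $a_i$ are positive is not actually needed for this argument; Cauchy--Schwarz holds for arbitrary real numbers, so the statement remains valid for any real $a_1, \ldots, a_m$.

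As an alternative that avoids invoking Cauchy--Schwarz as a black box, I could expand both sides and compare term by term. The expansion gives $\left(\sum_i a_i\right)^2 = \sum_i a_i^2 + \sum_{i \ne j} a_i a_j$, while $m\sum_i a_i^2 = \sum_i a_i^2 + (m-1)\sum_i a_i^2$. Subtracting, the desired inequality reduces to showing $\sum_{i \ne j} a_i a_j \le (m-1)\sum_i a_i^2$, which follows by pairing cross terms with the elementary bound $2a_i a_j \le a_i^2 + a_j^2$ (equivalently $(a_i - a_j)^2 \ge 0$) and summing over the $\binom{m}{2}$ unordered pairs.

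Since both routes are entirely elementary, there is no genuine obstacle here. The only thing to be mildly careful about is the trivial indexing bookkeeping in the direct expansion approach (correctly counting that each index $i$ appears in $m-1$ off-diagonal pairs), which the Cauchy--Schwarz route sidesteps completely. I would therefore present the Cauchy--Schwarz argument as the main proof for brevity.
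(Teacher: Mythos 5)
Your proof is correct. Note that the paper itself states this lemma as a standard fact without providing any proof, so there is nothing to compare against on the paper's side; your Cauchy--Schwarz argument (taking $u_i = a_i$, $v_i = 1$) is the canonical way to establish it, and your alternative expansion via $2a_ia_j \le a_i^2 + a_j^2$ is equally valid. Your observation that positivity of the $a_i$ is unnecessary is also correct --- the inequality holds for arbitrary reals --- and you sensibly resolved the statement's typographical mismatch between the index bound $n$ and the summation variable $m$.
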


\begin{lemma}[Lemma 9 from \cite{Shamir15}]\label{lem:lemma_9_shamir} For any function $g$ which  is $M$-Lipschitz with respect to the $\ell_2$-norm, it holds that if $e$ is uniformly distributed on the Euclidean unit sphere, then 
\begin{equation*}
    \sqrt{\mathbb{E}[(g(e) - \mathbb{E}g(e))^4]} \leq  \frac{3M^2}{n}.
\end{equation*}
\end{lemma}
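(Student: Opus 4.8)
The plan is to exploit the concentration of measure phenomenon on the Euclidean unit sphere together with a layer-cake integration of the tail. First I would reduce to the $1$-Lipschitz case: since $g/M$ is $1$-Lipschitz and $\sqrt{\mathbb{E}[(g(e)-\mathbb{E}g(e))^4]} = M^2\sqrt{\mathbb{E}[(g(e)/M - \mathbb{E}[g(e)/M])^4]}$, it suffices to prove $\sqrt{\mathbb{E}[(h(e) - \mathbb{E}h(e))^4]} \le 3/n$ for an arbitrary $1$-Lipschitz function $h$ on the sphere. This reduction is harmless because both sides of the claimed inequality scale as $M^2$.

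The central tool is Lévy's concentration inequality on $S^{n-1}$: for a $1$-Lipschitz $h$ and $e$ uniform on the unit sphere of $\mathbb{R}^n$,
\begin{equation*}
\mathbb{P}\left(|h(e) - \mathbb{E}h(e)| > t\right) \le 2\exp\left(-\frac{(n-1)t^2}{2}\right).
\end{equation*}
With this sub-Gaussian tail in hand I would express the fourth central moment through the layer-cake formula
\begin{equation*}
\mathbb{E}\left[(h(e) - \mathbb{E}h(e))^4\right] = \int_0^\infty 4t^3\,\mathbb{P}\left(|h(e) - \mathbb{E}h(e)| > t\right)\,dt,
\end{equation*}
substitute the tail bound, and evaluate the elementary integral $\int_0^\infty t^3 e^{-at^2}\,dt = 1/(2a^2)$ with $a = (n-1)/2$. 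This yields $\mathbb{E}[(h(e)-\mathbb{E}h(e))^4] \le 16/(n-1)^2$, hence $\sqrt{\mathbb{E}[(h(e)-\mathbb{E}h(e))^4]} \le 4/(n-1)$, which is already of the claimed order $1/n$.

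The main obstacle is sharpening the constant from the crude $4/(n-1)$ produced by the textbook Lévy bound down to the stated $3/n$. The origin of the precise constant $3$ is visible already in the extremal example $h(e) = e_1$, for which $\mathbb{E}[e_1^4] = 3/(n(n+2))$ on the sphere; the numerator $3$ there is exactly the constant appearing in the statement. To recover it in general I would avoid the lossy exponential tail and instead compute the fourth moment more directly, for instance via the representation $e = Z/\|Z\|_2$ with $Z \sim \mathcal{N}(0, I_n)$ a standard Gaussian vector, reducing spherical moments to ratios of Gaussian moments, or via a sharper form of the spherical concentration constant. The delicate bookkeeping needed to pass from the order-correct sub-Gaussian bound to the exact factor $3$ is where the real work lies; the remainder of the argument is the routine tail integration sketched above.
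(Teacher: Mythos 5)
You should first be aware that the paper does not prove this lemma at all: it is imported verbatim as Lemma 9 of \cite{Shamir15}, so the only meaningful comparison is with Shamir's original argument, which is indeed the concentration-plus-tail-integration route you follow. Within that route your steps are correct as far as they go: the scaling reduction to the $1$-Lipschitz case is legitimate, the concentration bound $\mathbb{P}(|h(e)-\mathbb{E}h(e)|>t)\le 2\exp(-(n-1)t^2/2)$ is a valid consequence of the log-Sobolev inequality on the sphere (an ambient $\ell_2$-Lipschitz function is also geodesically Lipschitz), and the layer-cake computation correctly yields $\mathbb{E}[(h(e)-\mathbb{E}h(e))^4]\le 16/(n-1)^2$.

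The genuine gap is the constant: your argument proves the lemma with $4/(n-1)$ in place of $3/n$, and $4/(n-1)>3/n$ for every $n$, so the inequality as stated is never established. You acknowledge this and propose to sharpen it via the Gaussian representation $e=Z/\|Z\|_2$ or a finer concentration constant, but neither is carried out --- and that sharpening is precisely the content of the claim, not ``bookkeeping''. Your heuristic for the constant is also off by a square root: for $h(e)=e_1$ one has $\mathbb{E}[e_1^4]=3/(n(n+2))$, hence $\sqrt{\mathbb{E}[e_1^4]}\approx\sqrt{3}/n$, so this example does not pin down the factor $3$. If you want a self-contained bound of the right size, a cleaner device than exponential tails is a Poincar\'e bootstrap: applying the spherical Poincar\'e inequality $\mathrm{Var}(f)\le\frac{1}{n-1}\mathbb{E}\bigl[\|\nabla f\|_2^2\bigr]$ (intrinsic gradient) first to $h$ and then to $(h-\mathbb{E}h)^2$ gives $\mathbb{E}\bigl[(h-\mathbb{E}h)^4\bigr]=\mathrm{Var}\bigl((h-\mathbb{E}h)^2\bigr)+(\mathrm{Var}\,h)^2\le\frac{4}{(n-1)^2}+\frac{1}{(n-1)^2}=\frac{5}{(n-1)^2}$, i.e.\ a bound $\sqrt{5}/(n-1)$, which is below $3/n$ for all $n\ge 4$ (small $n$ still needs a separate check --- further evidence that the stated constant requires real work). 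Finally, note that for this paper's purposes any absolute constant would suffice, since the lemma only enters the constant factor of $M^2_{all}$ in Lemma 1; but that observation concerns the downstream use and does not make your proposal a proof of the lemma as stated.
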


\section{Proofs for Section \ref{31}}

\begin{lemma} 
    For $g(z, \mathbf{e}, \tau,  \xi^{\pm})$ defined in \eqref{eq:PrSt3} under Assumptions 2 and 4 the following inequality holds:
    \begin{eqnarray*}
        \mathbb{E}\left[ \|g(z, \mathbf{e}, \tau,  \xi^{\pm})\|^2_q\right] \leq 3a_q^2 \left(3nM^2+ \frac{n^2(\sigma^2 + \Delta^2)}{\tau^2} \right),
    \end{eqnarray*}
    where $a^2_q$ is determined by $\EE[\|e\|_q^2] \leq \sqrt{\EE[\|e\|_q^4]} \le a^2_q$ and the following statement is true
    \begin{eqnarray*}
    a_q^2 = \min\{2q - 1, 32 \log n - 8\} n^{\frac{2}{q} - 1}, \quad \forall n \geq 3.
\end{eqnarray*}
\end{lemma}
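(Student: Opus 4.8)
The plan is to factor the squared $q$-norm of the estimator into a scalar function difference times $\|\mathbf{e}\|_q^2$, split the difference according to the oracle model, and then control the true-function part by measure concentration on the sphere rather than the crude Lipschitz bound. First I would note that negating the $y$-block of a vector leaves its $\ell_q$-norm unchanged, so the multiplier in \eqref{eq:PrSt3} has $q$-norm equal to $\|\mathbf{e}\|_q$ and therefore
$$\|g(z,\mathbf{e},\tau,\xi^{\pm})\|_q^2 = \frac{n^2}{4\tau^2}\bigl(\tilde\varphi(z+\tau\mathbf{e},\xi^+) - \tilde\varphi(z-\tau\mathbf{e},\xi^-)\bigr)^2\,\|\mathbf{e}\|_q^2.$$
Using the oracle model \eqref{PrSt1} I would decompose the bracket into the true difference $\varphi(z+\tau\mathbf{e})-\varphi(z-\tau\mathbf{e})$, the stochastic part $\xi^+-\xi^-$, and the deterministic part $\delta(z+\tau\mathbf{e})-\delta(z-\tau\mathbf{e})$, and apply \eqref{eq:squared_sum} with $m=3$ to obtain $(a+b+c)^2\le 3(a^2+b^2+c^2)$. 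This yields three expectations to bound, which are designed to match the three summands $3nM^2$, $n^2\sigma^2/\tau^2$, $n^2\Delta^2/\tau^2$ after multiplication by $3a_q^2$.

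The two noise terms are routine. Since $\mathbf{e}$ is independent of $\xi^\pm$ (Assumption 4), the stochastic contribution factors as $\tfrac{3n^2}{4\tau^2}\,\mathbb{E}[\|\mathbf{e}\|_q^2]\,\mathbb{E}[(\xi^+-\xi^-)^2]$; bounding $(\xi^+-\xi^-)^2\le 2\xi^{+2}+2\xi^{-2}$ with $\mathbb{E}[\xi^2]\le\sigma^2$ and $\mathbb{E}[\|\mathbf{e}\|_q^2]\le a_q^2$ gives at most $\tfrac{3n^2}{4\tau^2}\cdot 4\sigma^2\cdot a_q^2 = 3a_q^2\,n^2\sigma^2/\tau^2$. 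Likewise $|\delta|\le\Delta$ yields $(\delta^+-\delta^-)^2\le 4\Delta^2$ and a bound $\tfrac{3n^2}{4\tau^2}\cdot 4\Delta^2\cdot a_q^2 = 3a_q^2\,n^2\Delta^2/\tau^2$.

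The crux is the true-difference term, where the naive estimate $(\varphi^+-\varphi^-)^2\le 4\tau^2 M^2$ loses a factor of $n$ (it would produce $n^2M^2$ instead of the required $nM^2$). The remedy I would use is Cauchy–Schwarz to separate the two factors by their fourth moments,
$$\mathbb{E}\!\left[\|\mathbf{e}\|_q^2\bigl(\varphi(z+\tau\mathbf{e})-\varphi(z-\tau\mathbf{e})\bigr)^2\right] \le \sqrt{\mathbb{E}[\|\mathbf{e}\|_q^4]}\,\sqrt{\mathbb{E}\bigl[(\varphi(z+\tau\mathbf{e})-\varphi(z-\tau\mathbf{e}))^4\bigr]},$$
and then estimate the second factor through the concentration inequality of Lemma \ref{lem:lemma_9_shamir}. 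Writing $h(\mathbf{e})=\varphi(z+\tau\mathbf{e})$, which is $\tau M$-Lipschitz in $\mathbf{e}$, and subtracting $\mathbb{E}_{\mathbf{e}}h$ (the same for $\mathbf{e}$ and $-\mathbf{e}$ by symmetry of the sphere), I would bound $(h(\mathbf{e})-h(-\mathbf{e}))^4\le 8(h(\mathbf{e})-\mathbb{E}h)^4+8(h(-\mathbf{e})-\mathbb{E}h)^4$, so that $\mathbb{E}[(\varphi^+-\varphi^-)^4]\le 16\,\mathbb{E}[(h(\mathbf{e})-\mathbb{E}h)^4]$. Lemma \ref{lem:lemma_9_shamir} applied to the $\tau M$-Lipschitz $h$ gives $\sqrt{\mathbb{E}[(h-\mathbb{E}h)^4]}\le 3\tau^2M^2/n$, hence $\sqrt{\mathbb{E}[(\varphi^+-\varphi^-)^4]}\le 12\tau^2M^2/n$. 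Together with $\sqrt{\mathbb{E}[\|\mathbf{e}\|_q^4]}\le a_q^2$, this term is at most $\tfrac{3n^2}{4\tau^2}\cdot a_q^2\cdot\tfrac{12\tau^2M^2}{n}=9nM^2a_q^2 = 3a_q^2\cdot 3nM^2$. Summing the three contributions gives the claimed inequality. The explicit formula for $a_q^2$ is the standard moment estimate for $\|\mathbf{e}\|_q$ on the Euclidean unit sphere and can be quoted directly from the cited lemmas; the genuine obstacle is the variance-reduction step above, since it is the fourth-moment concentration of $h$ on the sphere — not any pointwise bound — that removes the spurious factor of $n$.
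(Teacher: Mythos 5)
Your proof is correct and follows essentially the same route as the paper's: the same three-way split via \eqref{eq:squared_sum}, independence for the noise terms, and the same key step of Cauchy--Schwarz combined with the fourth-moment concentration bound of Lemma \ref{lem:lemma_9_shamir} to turn the would-be $n^2M^2$ into $3nM^2$, landing on the identical constant $9nM^2a_q^2$. The only cosmetic difference is ordering: the paper centers at $\alpha = \mathbb{E}_{\mathbf{e}}[\varphi(z+\tau\mathbf{e})]$ and splits the square before applying Cauchy--Schwarz, whereas you apply Cauchy--Schwarz first and then center and split inside the fourth moment via $(a-b)^4 \le 8(a^4+b^4)$; both give the same bound.
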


\begin{proof} Using a simple fact \eqref{eq:squared_sum}, we obtain the following inequalities:
    \begin{eqnarray*}
  \mathbb{E}\left[ \|g(z, \mathbf{e}, \tau,  \xi^{\pm})\|^2_q\right]
  &=&\mathbb{E}\left[\left\|\frac{n}{2\tau}\left(\tilde{\varphi}(z+ \tau \mathbf{e},  \xi^{+}) - \widetilde{\varphi}(z - \tau \mathbf{e},  \xi^{-})\right)\mathbf{e}\right\|_q^2\right] \nonumber\\
  &=& \mathbb{E}\left[\left\|\frac{n}{2\tau}\left(\varphi(z + \tau \mathbf{e}) + \xi^+ + \delta(z + \tau \mathbf{e})  -  \varphi(z - \tau \mathbf{e}) - \xi^- - \delta(z - \tau \mathbf{e}) \right)\mathbf{e}\right\|_q^2\right] \nonumber\\
  &\leq& \frac{3n^2}{4\tau^2}\mathbb{E}\left[\left\|\left(\varphi(z + \tau \mathbf{e}) -  \varphi(z - \tau \mathbf{e})\right)\mathbf{e}\right\|_q^2\right] + \frac{3n^2}{4\tau^2}\mathbb{E}\left[\left\|\left(\xi^+ - \xi^-\right)\mathbf{e}\right\|_q^2\right] \nonumber\\
  &&+ \frac{3n^2}{4\tau^2}\mathbb{E}\left[\left\|\left(\delta(z + \tau \mathbf{e})  - \delta(z - \tau \mathbf{e}) \right)\mathbf{e}\right\|_q^2\right] \nonumber\\
  &\leq& \frac{3n^2}{4\tau^2}\mathbb{E}\left[\left(\varphi(z + \tau \mathbf{e},\xi) -  \varphi(z - \tau \mathbf{e},\xi)\right)^2\left\|\mathbf{e}\right\|_q^2\right] + \frac{3n^2}{2\tau^2}\mathbb{E}\left[\left((\xi^+)^2 + (\xi^-)^2\right)\left\|\mathbf{e}\right\|_q^2\right] \nonumber\\
  &&+ \frac{3n^2}{2\tau^2}\mathbb{E}\left[\left((\delta(z + \tau \mathbf{e}) )^2+ (\delta(z - \tau \mathbf{e}) )^2\right)\left\|\mathbf{e}\right\|_q^2\right].
  \end{eqnarray*}
By independence of $\xi^{\pm}$ and $\mathbf{e}$, we have
\begin{eqnarray*}
  \mathbb{E}\left[ \|g(z, \mathbf{e}, \tau, \xi^{\pm})\|^2_q\right]
  &\leq& \frac{3n^2}{4\tau^2}\mathbb{E}_{\xi}\left[\mathbb{E}_{\mathbf{e}}\left[\left(\varphi(z + \tau \mathbf{e}) - \alpha -  \varphi(z - \tau \mathbf{e}) + \alpha\right)^2\left\|\mathbf{e}\right\|_q^2\right]\right] \nonumber\\
  &&+ \frac{3n^2}{2\tau^2}\mathbb{E}_{\mathbf{e}}\left[\mathbb{E}_{\mathbf{\xi}}\left[\left((\xi^+)^2 + (\xi^-)^2\right)\left\|\mathbf{e}\right\|_q^2\right]\right] \nonumber\\
  &&+ \frac{3n^2}{2\tau^2}\mathbb{E}\left[\left((\delta(z + \tau \mathbf{e}))^2+ (\delta(z - \tau \mathbf{e}))^2\right)\left\|\mathbf{e}\right\|_q^2\right] \nonumber\\
  &\leq& \frac{3n^2}{2\tau^2}\mathbb{E}_{\xi}\left[\mathbb{E}_{\mathbf{e}}\left[\left(\left(\varphi(z + \tau \mathbf{e}) - \alpha\right)^2 + \left( \varphi(z - \tau \mathbf{e}) - \alpha\right)^2 \right)\left\|\mathbf{e}\right\|_q^2\right]\right] \nonumber\\
  &&+ \frac{3n^2}{2\tau^2}\mathbb{E}_{\mathbf{e}}\left[\mathbb{E}_{\mathbf{\xi}}\left[\left((\xi^+)^2 + (\xi^-)^2\right)\left\|\mathbf{e}\right\|_q^2\right]\right] \nonumber\\
  &&+ \frac{3n^2}{2\tau^2}\mathbb{E}\left[\left((\delta(z + \tau \mathbf{e}))^2+ (\delta(z - \tau \mathbf{e}))^2\right)\left\|\mathbf{e}\right\|_q^2\right] .
  \end{eqnarray*}
Taking into account the symmetric distribution of $\mathbf{e}$  and Cauchy--Schwarz inequality:
\begin{eqnarray*}
  \mathbb{E}\left[ \|g(z, \mathbf{e}, \tau,  \xi^{\pm})\|^2_q\right]
  &\leq& \frac{3n^2}{\tau^2}\mathbb{E}_{\xi}\left[\mathbb{E}_{\mathbf{e}}\left[\left(\varphi(z + \tau \mathbf{e}) - \alpha\right)^2 \left\|\mathbf{e}\right\|_q^2\right]\right] + \frac{3n^2}{2\tau^2}\mathbb{E}_{\mathbf{e}}\left[\mathbb{E}_{\mathbf{\xi}}\left[\left((\xi^+)^2 + (\xi^-)^2\right)\left\|\mathbf{e}\right\|_q^2\right]\right] \nonumber\\
  &&+ \frac{3n^2}{2\tau^2}\mathbb{E}\left[\left((\delta(z + \tau \mathbf{e}))^2+ (\delta(z - \tau \mathbf{e}))^2\right)\left\|\mathbf{e}\right\|_q^2\right] \nonumber\\
  &\leq& \frac{3n^2}{\tau^2}\mathbb{E}_{\xi}\left[\sqrt{\mathbb{E}_{\mathbf{e}}\left[\left(\varphi(z + \tau \mathbf{e},\xi) - \alpha\right)^4\right]} \sqrt{\mathbb{E}_{\mathbf{e}}\left[\left\|\mathbf{e}\right\|_q^4\right]}\right] \nonumber\\
  &&+ \frac{3n^2}{2\tau^2}\mathbb{E}_{\mathbf{e}}\left[\mathbb{E}_{\mathbf{\xi}}\left[\left((\xi^+)^2 + (\xi^-)^2\right)\left\|\mathbf{e}\right\|_q^2\right]\right] \nonumber\\
  &&+ \frac{3n^2}{2\tau^2}\mathbb{E}\left[\left((\delta(z + \tau \mathbf{e}))^2+ (\delta(z - \tau \mathbf{e}))^2\right)\left\|\mathbf{e}\right\|_q^2\right] \nonumber\\
  &\leq& \frac{3n^2 a_q^2}{\tau^2}\mathbb{E}_{\xi}\left[\sqrt{\mathbb{E}_{\mathbf{e}}\left[\left(\varphi(z + \tau \mathbf{e},\xi) - \alpha\right)^4\right]}\right] + \frac{3n^2 a_q^2 (\sigma^2 + \Delta^2)}{\tau^2}.
  \end{eqnarray*}  
In the last inequalities we use \eqref{eq:PrSt2} and \eqref{eq:condition_on_u}. Substituting $\alpha = \mathbb{E}\left[\varphi(z + \tau \mathbf{e})\right]$, applying Lemma \ref{lem:lemma_9_shamir} with the fact that $\varphi(z + \tau \mathbf{e})$ is $\tau M$-Lipschitz w.r.t. $\mathbf{e}$ in terms of the $\|\cdot\|_2$-norm we get
\begin{eqnarray*}
  \mathbb{E}\left[ \|g(z, \mathbf{e}, \tau,  \xi^{\pm})\|^2_q\right]
  &\leq& 3a_q^2 \left(3n M^2+ \frac{n^2(\sigma^2 + \Delta^2)}{\tau^2} \right).
  \end{eqnarray*}
\EndProof
\end{proof}

\begin{lemma}
   Let ${\varphi}(z)$ is $\mu$-strongly-convex-strongly-concave (convex-concave with $\mu = 0$) and $\mathbf{e}$ be from $\mathcal{RS}^n_2(1)$. Then function $\hat{\varphi}(z)$ is $\mu$-strongly-convex-strongly-concave  and under Assumption 2 satisfies:
    \begin{eqnarray*}
        \sup_{z \in \mathcal{Z}} |\hat{\varphi}(z) - {\varphi}(z)| \leq \tau M. 
    \end{eqnarray*}
\end{lemma}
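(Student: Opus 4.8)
The plan is to prove the two assertions separately: that $\hat{\varphi}$ inherits the (strong) convex--concave structure of $\varphi$, and the uniform bound $\sup_{z\in\mathcal{Z}}|\hat{\varphi}(z)-\varphi(z)|\le \tau M$. Throughout I use that $\hat{\varphi}(z)=\mathbb{E}_{\mathbf{e}}[\varphi(z+\tau\mathbf{e})]$ is an expectation of affinely shifted copies of $\varphi$, and that by the remark preceding the statement $\varphi$ is defined on a neighbourhood of $\mathcal{Z}$, so that $z+\tau\mathbf{e}$ is always admissible.

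For the structural claim, first I would fix $y$ and observe that for each realization $\mathbf{e}=(\mathbf{e}_x,\mathbf{e}_y)$ the map $x\mapsto\varphi(x+\tau\mathbf{e}_x,\,y+\tau\mathbf{e}_y)$ is convex in $x$, being the composition of the convex function $\varphi(\cdot,\,y+\tau\mathbf{e}_y)$ with the affine shift $x\mapsto x+\tau\mathbf{e}_x$. Since expectation preserves convexity (it is an integral of convex functions), $\hat{\varphi}(\cdot,y)$ is convex; applying the same argument to $-\varphi$ in the $y$ variable gives concavity of $\hat{\varphi}(x,\cdot)$.

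To upgrade this to the $\mu$-strongly-convex--strongly-concave case, I would use the characterization that $f$ is $\mu$-strongly convex w.r.t. $\|\cdot\|_2$ iff $f-\tfrac{\mu}{2}\|\cdot\|_2^2$ is convex. The one point requiring care is the affine shift: the hypothesis gives that $x\mapsto\varphi(x+\tau\mathbf{e}_x,y+\tau\mathbf{e}_y)-\tfrac{\mu}{2}\|x+\tau\mathbf{e}_x\|_2^2$ is convex, whereas I want $x\mapsto\varphi(x+\tau\mathbf{e}_x,y+\tau\mathbf{e}_y)-\tfrac{\mu}{2}\|x\|_2^2$ to be convex. These differ by $\tfrac{\mu}{2}(\|x+\tau\mathbf{e}_x\|_2^2-\|x\|_2^2)=\mu\tau\langle x,\mathbf{e}_x\rangle+\tfrac{\mu}{2}\tau^2\|\mathbf{e}_x\|_2^2$, which is affine in $x$; adding an affine term preserves convexity, so the shifted function is $\mu$-strongly convex for every $\mathbf{e}$, and taking the expectation preserves this. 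Hence $\hat{\varphi}(\cdot,y)$ is $\mu$-strongly convex, and symmetrically $\hat{\varphi}(x,\cdot)$ is $\mu$-strongly concave.

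For the bound, I would write $\hat{\varphi}(z)-\varphi(z)=\mathbb{E}_{\mathbf{e}}[\varphi(z+\tau\mathbf{e})-\varphi(z)]$, pass the absolute value inside the expectation by the triangle inequality (Jensen), and estimate each term with Assumption 2: $|\varphi(z+\tau\mathbf{e})-\varphi(z)|\le M\|\tau\mathbf{e}\|_2=\tau M\|\mathbf{e}\|_2=\tau M$, using that $\mathbf{e}\in\mathcal{RS}^n_2(1)$ has unit Euclidean norm. Taking the supremum over $z\in\mathcal{Z}$ then yields the claim. The approximation bound is essentially immediate from Lipschitzness; the only genuinely delicate step is the affine-shift bookkeeping in the strong-convexity argument, and everything else is routine.
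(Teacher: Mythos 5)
Your proposal is correct, and for the only part the paper actually proves --- the bound $\sup_{z\in\mathcal{Z}}|\hat{\varphi}(z)-\varphi(z)|\le\tau M$ --- your argument is identical to the paper's: write $\hat{\varphi}(z)-\varphi(z)=\mathbb{E}_{\mathbf{e}}[\varphi(z+\tau\mathbf{e})-\varphi(z)]$, pass the absolute value inside, and use $M$-Lipschitzness with $\|\mathbf{e}\|_2=1$ (you make the Jensen step explicit; the paper elides it). Where you genuinely differ is the structural claim: the paper's proof omits it entirely, inheriting the preservation of $\mu$-strong convexity--concavity from the cited Lemma 8 of Shamir (2015), whereas you supply a self-contained argument. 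That argument is sound: for each realization of $\mathbf{e}$ the map $x\mapsto\varphi(x+\tau\mathbf{e}_x,\,y+\tau\mathbf{e}_y)$ is convex as a composition with an affine shift, expectation preserves convexity, and your affine-shift bookkeeping --- the difference $\tfrac{\mu}{2}\|x+\tau\mathbf{e}_x\|_2^2-\tfrac{\mu}{2}\|x\|_2^2=\mu\tau\langle x,\mathbf{e}_x\rangle+\tfrac{\mu}{2}\tau^2\|\mathbf{e}_x\|_2^2$ being affine in $x$ --- correctly upgrades this to $\mu$-strong convexity for every realization, after which taking expectations preserves it. One caveat worth stating: the characterization ``$f$ is $\mu$-strongly convex iff $f-\tfrac{\mu}{2}\|\cdot\|_2^2$ is convex'' is specific to the Euclidean norm, but Assumption 3 of the paper is stated w.r.t.\ $\|\cdot\|_2$, so your use of it is legitimate. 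The net effect is that your proof is more complete than the paper's own, at the cost of a page of routine but necessary verification that the paper outsources to a reference.
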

\begin{proof}
    Using definition \eqref{eq:smoothphi} of $\hat \varphi$:
    \begin{eqnarray*}
        \big|\hat{\varphi}(z) - {\varphi}(z)\big|= 
        \big|\mathbb{E}_{\mathbf{e}}[\varphi(z + \tau \mathbf{e})] - \varphi(z)\big| = \left|\mathbb{E}_{\mathbf{e}}\left[\varphi(z + \tau \mathbf{e}) - \varphi(z) \right] \right|.
    \end{eqnarray*}
    Since $\varphi(z)$ is $M$-Lipschitz, we get
    \begin{eqnarray*}
       \left|\mathbb{E}_{\mathbf{e}}\left[\varphi(z + \tau \mathbf{e}) - \varphi(z) \right]\right| &\leq&  \left|\mathbb{E}_{\mathbf{e}}\left[ M \|\tau \mathbf{e}\|_2 \right] \right| \leq M\tau .
    \end{eqnarray*}
    \EndProof
\end{proof}

\begin{lemma}
    Under Assumption 4 it holds that 
    \begin{eqnarray}
    \label{temp9858}
    \tilde \nabla \hat{\varphi}(z) = \mathbb{E}_{\mathbf{e}}\left[\frac{n\left(\varphi(z + \tau \mathbf{e}) -  \varphi(z - \tau \mathbf{e})\right)}{2\tau}\left(
    \begin{array}{c}
    \mathbf{e}_x\\
    -\mathbf{e}_y \\
    \end{array}\right)\right],\\
    \|\mathbb{E}_{\mathbf{e},\xi}[ g(z, \mathbf{e}, \tau, \xi^{\pm}) ] - \tilde \nabla \hat{\varphi}(z)\|_q \leq \frac{\Delta n a_q}{\tau}\nonumber\hspace{2cm}.
    \end{eqnarray}
\end{lemma}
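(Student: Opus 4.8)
The plan is to prove the two claims in order: first the exact expression \eqref{temp9858} for $\tilde{\nabla}\hat\varphi(z)$, and then to use it as the backbone of the bias estimate, where the unbiased noise integrates out and only the deterministic perturbation $\delta$ survives.

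For the identity \eqref{temp9858}, I would start from the definition $\hat\varphi(z)=\mathbb{E}_{\mathbf e}[\varphi(z+\tau\mathbf e)]$ and invoke the divergence/Stokes argument underlying Lemma 10 of \cite{Shamir15} and Lemma 2 of \cite{Beznosikov}, which in the pure minimization setting expresses the gradient of the smoothed function as $\frac{n}{\tau}\mathbb{E}_{\mathbf e}[\varphi(z+\tau\mathbf e)\,\mathbf e]$. The passage to the block vector $(\mathbf e_x,-\mathbf e_y)$ is then bookkeeping: differentiating in the $x$-coordinates yields $\mathbf e_x$, while the sign convention of $\tilde{\nabla}$, which negates the $\nabla_y$ block, turns $\mathbf e_y$ into $-\mathbf e_y$. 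Finally, since $\mathbf e$ and $-\mathbf e$ are identically distributed on $\mathcal{RS}^n_2(1)$, I would symmetrize $\frac{n}{\tau}\mathbb{E}_{\mathbf e}[\varphi(z+\tau\mathbf e)(\mathbf e_x,-\mathbf e_y)]$ into the two-point form $\frac{n}{2\tau}\mathbb{E}_{\mathbf e}[(\varphi(z+\tau\mathbf e)-\varphi(z-\tau\mathbf e))(\mathbf e_x,-\mathbf e_y)]$, which is exactly \eqref{temp9858}.

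For the bias bound, the plan is to substitute the oracle model \eqref{PrSt1} into \eqref{eq:PrSt3} and take expectations in stages. Writing $\tilde\varphi(z\pm\tau\mathbf e,\xi^\pm)=\varphi(z\pm\tau\mathbf e)+\xi^\pm+\delta(z\pm\tau\mathbf e)$ and taking $\mathbb{E}_\xi$ first, the unbiasedness $\mathbb{E}\xi=0$ from \eqref{eq:PrSt2} together with the independence of $\xi^\pm$ from $\mathbf e$ annihilates both noise terms, leaving a $\varphi$-difference plus a $\delta$-difference, each multiplied by $\frac{n}{2\tau}(\mathbf e_x,-\mathbf e_y)$. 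Taking $\mathbb{E}_{\mathbf e}$ and invoking \eqref{temp9858} identifies the $\varphi$-part with $\tilde{\nabla}\hat\varphi(z)$, so that
\[
\mathbb{E}_{\mathbf e,\xi}[g]-\tilde{\nabla}\hat\varphi(z)=\frac{n}{2\tau}\,\mathbb{E}_{\mathbf e}\!\left[(\delta(z+\tau\mathbf e)-\delta(z-\tau\mathbf e))(\mathbf e_x,-\mathbf e_y)\right].
\]
It then remains to bound the $\|\cdot\|_q$-norm of this residual: by Jensen I pull the norm inside the expectation, use $\|(\mathbf e_x,-\mathbf e_y)\|_q=\|\mathbf e\|_q$, bound $|\delta(z+\tau\mathbf e)-\delta(z-\tau\mathbf e)|\le 2\Delta$ via $|\delta|\le\Delta$ from \eqref{eq:PrSt2}, and finally apply $\mathbb{E}_{\mathbf e}[\|\mathbf e\|_q]\le\sqrt{\mathbb{E}_{\mathbf e}[\|\mathbf e\|_q^2]}\le a_q$ from Lemma \ref{beznos}. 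This gives $\frac{n}{2\tau}\cdot 2\Delta\cdot a_q=\frac{\Delta n a_q}{\tau}$, as required.

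I expect the only genuine difficulty to be the identity \eqref{temp9858}: it is the single step that is not pure bookkeeping, resting on the Stokes-type computation and on the precise spherical smoothing convention supplied by the cited lemmas. Once \eqref{temp9858} is established, the bias estimate is a routine staged expectation, whose essential feature is simply that the zero-mean stochastic noise $\xi$ drops out while the bounded deterministic bias $\delta$ is controlled coordinate-insensitively through the norm equality $\|(\mathbf e_x,-\mathbf e_y)\|_q=\|\mathbf e\|_q$.
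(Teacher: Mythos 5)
Your proposal is correct and follows essentially the same route as the paper: the identity \eqref{temp9858} is delegated to the Stokes-type argument of Lemma 10 in \cite{Shamir15} (with the block/sign bookkeeping and symmetrization you describe), and the bias bound is obtained exactly as in the paper's proof, by cancelling the zero-mean noise $\xi^{\pm}$, reducing the difference $\mathbb{E}_{\mathbf{e},\xi}[g]-\tilde\nabla\hat\varphi(z)$ to the residual $\frac{n}{2\tau}\mathbb{E}_{\mathbf{e}}\bigl[(\delta(z+\tau\mathbf{e})-\delta(z-\tau\mathbf{e}))(\mathbf{e}_x,-\mathbf{e}_y)\bigr]$, and then applying $|\delta|\le\Delta$ together with $\mathbb{E}_{\mathbf{e}}[\|\mathbf{e}\|_q]\le a_q$. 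Your write-up actually fills in the final estimation step (Jensen, the norm equality, the factor $2\Delta$) that the paper compresses into one sentence, so nothing is missing.
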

\begin{proof}
    The proof of \eqref{temp9858} is given in \cite{Shamir15} and follows from the Stokes' theorem.
    Then 
    \begin{eqnarray*}
    \mathbb{E}_{\mathbf{e},\xi}[ g(z, \mathbf{e}, \tau, \xi^{\pm}) ] - \tilde \nabla \hat{\varphi}(z) &=&  \mathbb{E}_{\mathbf{e}} \left[ \frac{n \left(\delta(z + \tau \mathbf{e}) -  \delta(z - \tau \mathbf{e})\right)}{2\tau}\left(\begin{array}{c}
    \mathbf{e}_x\\
    -\mathbf{e}_y \\
    \end{array}
    \right)\right].
    \end{eqnarray*}
    Using inequalities \eqref{eq:PrSt2} and definition of $a_q$ completes the proof.
    \EndProof
\end{proof}

\textbf{Theorem 1.}
Let problem \eqref{problem} with function $\varphi(x,y)$ be solved using Algorithm \ref{alg1} with the oracle \eqref{eq:PrSt3}. Assume, that the set $\mathcal{Z}$, the convex-concave function $\varphi(x,y)$ and its inexact modification $\widetilde{\varphi}(x,y)$ satisfy Assumptions 1, 2, 4. Denote by $N$ the number of iterations and $\gamma_k = \gamma = const$. Then the rate of convergence is given by the following expression:
\begin{eqnarray*}
    \mathbb{E}\left[\varepsilon_{sad}(\bar z_{N})\right] &\leq&
    \frac{3\Omega^2}{2 \gamma (N+1)} + \frac{3\gamma M^2_{all}}{2} + \frac{\Delta \Omega n a_q}{\tau} + 2\tau M.
\end{eqnarray*} 
$\Omega$ is a diameter of $\mathcal{Z}$, $M^2_{all} = 3\left(3n M^2 +\frac{n^2(\sigma^2 + \Delta^2)}{\tau^2}\right)a^2_q$ and 
\begin{equation*}
    \varepsilon_{sad}(\bar z_{N}) = \max_{y' \in \mathcal{Y}} \varphi(\bar x_{N}, y') - \min_{x' \in \mathcal{X}} \varphi(x', \bar y_{N}).
\end{equation*}

\begin{proof} We divided the proof into three steps.

\textbf{Step 1.} Let $g_k \eqdef \gamma g(z_k, \mathbf{e}_k, \tau,  \xi^{\pm}_k)$. By the step of Algorithm \ref{alg1}, $z_{k + 1}= \text{prox}_{z_k}(g_k)$. Taking into account \eqref{lemma3_1}, we get that for all $u \in \mathcal{Z}$
\begin{eqnarray*}
    \langle g_k , z_{k+1} - u\rangle = \langle g_k , z_{k+1} - z_{k} + z_{k} - u\rangle \leq V_{z_k}(u) - V_{z_{k + 1}}(u) - V_{z_k}(z_{k + 1}).
\end{eqnarray*}
By simple transformations:
\begin{eqnarray*}
    \langle g_k , z_{k} - u\rangle &\leq& \langle g_k , z_{k} - z_{k+1}\rangle + V_{z_k}(u) - V_{z_{k + 1}}(u) - V_{z_k}(z_{k + 1}) \nonumber\\
    &\leq& \langle g_k , z_{k} - z_{k+1}\rangle + V_{z_k}(u) - V_{z_{k + 1}}(u) - \frac{1}{2}\|z_{k + 1} - z_{k}\|^2_p. 
\end{eqnarray*}
In last inequality we use the property of the Bregman divergence: $V_x(y) \ge \frac{1}{2}\|x-y\|_p^2$.
Using Hölder's inequality and the fact: $ab - \nicefrac{b^2}{2} \leqslant\nicefrac{a^2}{2}$, we have
\begin{eqnarray}
    \label{temp2}
    \langle g_k , z_{k} - u\rangle &\leq&
    \| g_k \|_q\|z_{k} - z_{k+1}\|_p  + V_{z_k}(u) - V_{z_{k + 1}}(u) - \frac{1}{2}\|z_{k + 1} - z_{k}\|^2_p  \nonumber\\
    &\leq&
    V_{z_k}(u) - V_{z_{k + 1}}(u) + \frac{1}{2}\| g_k \|^2_q.
\end{eqnarray}
Summing \eqref{temp2} over all $k$ from 0 to $N$ and by the definitions of $g_k$ and $\Omega$ (diameter of $\mathcal{Z}$): $\forall u \in \mathcal{Z}$
\begin{equation}
    \label{temp3}
    \gamma \sum^N_{k = 0}  \langle g(z_k, \mathbf{e}_k, \tau,  \xi^{\pm}_k) , z_{k} - u\rangle \leq\frac{\Omega^2}{2} + \frac{\gamma^2}{2}\sum^N_{k = 0}  \| g(z_k, \mathbf{e}_k, \tau,  \xi^{\pm}_k)\|^2_q.
\end{equation}
Let $\Delta_k \eqdef g(z_k, \mathbf{e}_k, \tau, \xi^{\pm}_k) -  \tilde\nabla \hat{\varphi}(z_k)$ and $D(u) \eqdef \sum^N_{k = 0} \gamma \langle \Delta_k  ,u - z_{k}\rangle$. Substituting the definition of $D(u)$ in \eqref{temp3}, we have for all $u \in \mathcal{Z}$
\begin{eqnarray}
    \label{temp_main}
    \gamma \sum^N_{k = 0}  \langle \tilde\nabla \hat{\varphi}(z_k) , z_{k} - u\rangle &\leq&\frac{\Omega^2}{2} + \frac{\gamma^2}{2}\sum^N_{k = 0}  \| g(z_k, \mathbf{e}_k, \tau,  \xi^{\pm}_k) \|^2_q +  D(u) .
\end{eqnarray}
By $\tilde \nabla \hat{\varphi}(z)$ we mean a block vector consisting of two vectors $\nabla_x \hat{\varphi}(x,y)$ and $-\nabla_y \hat{\varphi}(x,y)$.

\textbf{Step 2.} 
We consider a relationship between functions $\hat{\varphi}(z)$ and $\varphi(z)$. 
Combining \eqref{sad} and \eqref{lemma1_0} we get
\begin{eqnarray*}
    \varepsilon_{sad}(\bar z_N) 
    &\le& \max\limits_{y' \in \mathcal{Y}} \hat{\varphi}(\bar x_N, y') - \min\limits_{x' \in \mathcal{X}} \hat{\varphi}(x', \bar y_N) + 2\tau M. 
\end{eqnarray*}
Then, by the definition of $\bar x_N$ and $\bar y_N$ (see \eqref{sad}), Jensen's inequality and convexity-concavity of $\hat{\varphi}$:
\begin{eqnarray*}
    \varepsilon_{sad}(\bar z_N)
    &\leq& \max\limits_{y' \in \mathcal{Y}} \hat{\varphi}\left(\frac{1}{N+1} \left(\sum^N_{k = 0}  x_k\right) , y'\right) - \min\limits_{x' \in \mathcal{X}} \hat{\varphi}\left(x', \frac{1}{N+1} \left(\sum^N_{k = 0} y_k \right)\right) \nonumber \\
    &&+ 2\tau M
    \nonumber \\
    &\leq& \max\limits_{y' \in \mathcal{Y}} \frac{1}{N+1} \sum^N_{k = 0} \hat{\varphi}(x_k, y')  - \min\limits_{x' \in \mathcal{X}} \frac{1}{N+1} \sum^N_{k = 0} \hat{\varphi}(x', y_k) + 2\tau M.
\end{eqnarray*}
Given the fact of linear independence of $x'$ and $y'$:
\begin{eqnarray*}
    \varepsilon_{sad}(\bar z_N) &\leq& \max\limits_{(x', y') \in \mathcal{Z}}\frac{1}{N+1} \sum^N_{k = 0}  \left(\hat{\varphi}(x_k, y')  - \hat{\varphi}(x', y_k) \right) + 2\tau M.
\end{eqnarray*}
Using convexity and concavity of the function $\hat{\varphi}$:
\begin{eqnarray}
\label{temp8}
    \varepsilon_{sad}(\bar z_N) &\leq&  \max\limits_{(x', y') \in \mathcal{Z}}\frac{1}{N+1} \sum^N_{k = 1}  \left(\hat{\varphi}(x_k, y')  - \hat{\varphi}(x', y_k) \right) + 2\tau M  \nonumber \\
    &= & \max\limits_{(x', y') \in \mathcal{Z}} \frac{1}{N+1} \sum^N_{k = 1} \left(\hat{\varphi}(x_k, y') - \hat{\varphi}(x_k, y_k) + \hat{\varphi}(x_k, y_k) - \hat{\varphi}(x', y_k) \right) \nonumber \\
    &&+ 2\tau M  \nonumber \\
    &\leq& \max\limits_{(x', y') \in \mathcal{Z}} \frac{1}{N+1} \sum^N_{k = 1} \left(\langle \nabla_y \hat{\varphi} (x_k, y_k), y'-y_k \rangle + \langle \nabla_x \hat{\varphi} (x_k, y_k), x_k-x' \rangle \right) \nonumber \\
    &&+ 2\tau M \nonumber \\
    &\leq& \max\limits_{u \in \mathcal{Z}} \frac{1}{N+1} \sum^N_{k = 0}  \langle \tilde \nabla \hat{\varphi}(z_k), z_k - u\rangle  + 2\tau M.
\end{eqnarray}

\textbf{Step 3.} Combining expressions \eqref{temp_main}, \eqref{temp8}, \eqref{boundlem0} and taking full mathematical expectation, we get
\begin{eqnarray}
    \label{t4t}
    \mathbb{E}\left[\varepsilon_{sad}(\bar z_N)\right] 
    &\leq& \frac{\Omega^2}{2\gamma(N+1)} + \frac{\gamma M^2_{all}}{2} + \frac{1}{\gamma(N+1)}\mathbb{E}\left[\max_{u \in \mathcal{Z}} D(u)\right]  + 2\tau M .
\end{eqnarray}

Let's estimate $D(u)$. For this we prove the following lemma:
\begin{lemma}[see Lemma 5.3.2 from \cite{nemirovski}]\label{lemma3}
\begin{eqnarray}
    \label{19}
    \mathbb{E}\left[\max_{u \in \mathcal{Z}} D(u)\right] &\leq& 
    \Omega^2 + \frac{\gamma (N + 1)\Delta \Omega n a_q}{\tau} + \gamma^2 M^2_{all} (N + 1), 
\end{eqnarray}
where $M^2_{all} \eqdef
3\left(cn M^2 + \frac{n^2(\sigma^2 + \Delta^2)}{\tau^2}\right)a^2_q$ is from Lemma 1.
\end{lemma}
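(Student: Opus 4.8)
The plan is to bound the random linear form $D(u)=\sum_{k=0}^{N}\gamma\langle\Delta_k,u-z_k\rangle$ by Nemirovski's ``ghost iterate'' device, which decouples the error directions $\Delta_k$ from the iterates $z_k$ and lets the supremum over $u$ pass through a telescoping estimate. First I would introduce an auxiliary sequence $\{u_k\}_{k\geq 0}$ in $\mathcal{Z}$, started at $u_0=z_0$ and driven by the same errors through the mirror step $u_{k+1}=\text{prox}_{u_k}(-\gamma\Delta_k)$. The point of this construction is that $u_k$ depends only on $\Delta_0,\dots,\Delta_{k-1}$, so it is adapted to the history generated before step $k$, exactly like $z_k$, while sharing a telescoping Bregman structure.

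Next I would split each summand as $\langle\Delta_k,u-z_k\rangle=\langle\Delta_k,u-u_k\rangle+\langle\Delta_k,u_k-z_k\rangle$ and treat the two sums separately. For the first sum I would apply the prox inequality \eqref{lemma3_1} with $w=-\gamma\Delta_k$, $z=u_k$, $z_+=u_{k+1}$, then add and subtract $u_{k+1}$, and bound the cross term using H\"older's inequality together with $ab-\tfrac{b^2}{2}\leq\tfrac{a^2}{2}$ and $V_{u_k}(u_{k+1})\geq\tfrac12\|u_{k+1}-u_k\|_p^2$, exactly as in the derivation of \eqref{temp2}. This yields
\[
\gamma\langle\Delta_k,u-u_k\rangle\leq V_{u_k}(u)-V_{u_{k+1}}(u)+\tfrac{\gamma^2}{2}\|\Delta_k\|_q^2,
\]
and summing over $k$ telescopes the divergences, leaving $V_{u_0}(u)+\tfrac{\gamma^2}{2}\sum_k\|\Delta_k\|_q^2$. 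The bound $V_{u_0}(u)\leq\Omega^2/2$ holds uniformly in $u$, so taking the supremum over $u\in\mathcal{Z}$ is harmless; taking expectations and invoking Lemma~\ref{beznos} (bound \eqref{boundlem0}) to control $\mathbb{E}\|\Delta_k\|_q^2$ by $M^2_{all}$ produces the $\Omega^2$ and $\gamma^2M^2_{all}(N+1)$ contributions.

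For the second sum $\gamma\sum_k\langle\Delta_k,u_k-z_k\rangle$ I would exploit that both $u_k$ and $z_k$ are measurable with respect to the randomness generated before step $k$, whereas $\mathbf{e}_k,\xi^{\pm}_k$ are fresh. Conditioning on the history and using linearity gives $\mathbb{E}[\langle\Delta_k,u_k-z_k\rangle]=\mathbb{E}[\langle\mathbb{E}_{\mathbf{e}_k,\xi_k}[\Delta_k],u_k-z_k\rangle]$, where $\mathbb{E}_{\mathbf{e}_k,\xi_k}[\Delta_k]$ is precisely the gradient bias controlled by $\tfrac{\Delta n a_q}{\tau}$ in \eqref{temp986}. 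Combining this with H\"older's inequality and $\|u_k-z_k\|_p\leq\Omega$ (which follows from $\tfrac12\|\cdot\|_p^2\leq V$ and the definition of the diameter) bounds each expected term by $\tfrac{\Delta\Omega n a_q}{\tau}$, and summing the $N+1$ of them gives the remaining contribution $\tfrac{\gamma(N+1)\Delta\Omega n a_q}{\tau}$.

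The main obstacle is the coupling between the error $\Delta_k$ and the iterate $z_k$: since $z_k$ is itself built from the noisy estimates, $\langle\Delta_k,u-z_k\rangle$ is not a martingale difference, and a naive term-by-term expectation cannot control the supremum over $u$. The ghost iterate resolves exactly this---it absorbs the $u$-dependence into a telescoping Bregman sum, leaving only the $z_k$-dependent residual $\langle\Delta_k,u_k-z_k\rangle$, whose expectation is governed by the deterministic bias from \eqref{temp986} rather than by the uncontrolled fluctuations. The delicate part is keeping the measurability bookkeeping correct, so that the conditional expectation isolates precisely the bias term and the zero-mean fluctuations drop out.
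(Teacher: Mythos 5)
Your proposal follows essentially the same route as the paper: the paper's auxiliary sequence $v_{k+1}=\text{prox}_{v_k}(-\rho\gamma\Delta_k)$ is exactly your ghost iterate, the decomposition $\langle\Delta_k,u-z_k\rangle=\langle\Delta_k,u-u_k\rangle+\langle\Delta_k,u_k-z_k\rangle$ is the paper's split, the telescoping via the prox inequality \eqref{lemma3_1} matches, and the measurability argument reducing the residual sum to the bias bound \eqref{temp986} is identical. However, there is one concrete gap in your constants. You claim that Lemma~\ref{beznos} controls $\mathbb{E}\|\Delta_k\|_q^2$ by $M^2_{all}$, but \eqref{boundlem0} bounds $\mathbb{E}\|g_k\|_q^2$, not $\mathbb{E}\|g_k-\tilde\nabla\hat\varphi(z_k)\|_q^2$. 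Since $\tilde\nabla\hat\varphi(z_k)$ is not $\mathbb{E}[g_k]$ (there is bias) and, more fundamentally, there is no variance-type identity $\mathbb{E}\|X-\mathbb{E}X\|_q^2\leq\mathbb{E}\|X\|_q^2$ for general non-Euclidean $q$-norms, the honest elementary bound is
\begin{equation*}
\mathbb{E}\left[\|\Delta_k\|_q^2\right]\leq 2\,\mathbb{E}\left[\|g_k\|_q^2\right]+2\,\|\tilde\nabla\hat{\varphi}(z_k)\|_q^2\leq 4M^2_{all},
\end{equation*}
which is what the paper uses (the bound $\|\tilde\nabla\hat{\varphi}(z_k)\|_q^2\leq M^2_{all}$ follows from the representation \eqref{temp9858} and Jensen).

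With the corrected factor $4M^2_{all}$ and your unit-scaled ghost step $u_{k+1}=\text{prox}_{u_k}(-\gamma\Delta_k)$, your derivation yields $\frac{\Omega^2}{2}+\frac{\gamma(N+1)\Delta\Omega n a_q}{\tau}+2\gamma^2M^2_{all}(N+1)$, which is \emph{not} dominated by the lemma's stated bound $\Omega^2+\frac{\gamma(N+1)\Delta\Omega n a_q}{\tau}+\gamma^2M^2_{all}(N+1)$: your variance term is twice too large. The paper avoids this by running the ghost sequence with step $-\rho\gamma\Delta_k$ for a free parameter $\rho>0$, which produces $\frac{\Omega^2}{2\rho}+2\rho\gamma^2M^2_{all}(N+1)$ for the corresponding two terms, and then choosing $\rho=\nicefrac{1}{2}$ to trade the diameter constant against the variance constant and land exactly on the stated inequality. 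Your argument is structurally sound and the fix is a one-line modification, but as written it does not establish the lemma's constants.
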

\begin{proof} Let define sequence $v$: $v_1 \eqdef z_1$, $v_{k+1} \eqdef \text{prox}_{v_k} (-\rho \gamma \Delta_k)$ for some $\rho > 0$:
\begin{eqnarray}
\label{temp177}
    D(u) &=& \gamma\sum\limits_{k=0}^N  \langle -\Delta_k, z_k - u \rangle \nonumber\\
    &=& \gamma\sum\limits_{k=0}^N  \langle -\Delta_k, z_k - v_k \rangle + \gamma
    \sum\limits_{k=0}^N  \langle -\Delta_k,  v_k - u \rangle . 
\end{eqnarray}
By the definition of $v$ and an optimal condition for the prox-operator, we have for all $u \in \mathcal{Z}$
\begin{eqnarray*}
    \langle -\gamma \rho\Delta_k - \nabla d(v_{k+1})  + \nabla d(v_{k+1}), u - v_{k+1} \rangle \geq 0.
\end{eqnarray*}
Rewriting this inequality, we get
\begin{eqnarray*}
    \langle -\gamma \rho\Delta_k, v_k  - u \rangle \leq \langle -\gamma \rho\Delta_k, v_k - v_{k+1} \rangle  + \langle \nabla d (v_{k+1}) - \nabla d (v_k), u - v_{k+1} \rangle.
\end{eqnarray*}
Using \eqref{temp111}:
\begin{eqnarray*}
    \langle -\gamma \rho\Delta_k, v_k  - u \rangle \leq \langle -\gamma \rho\Delta_k, v_k - v_{k+1} \rangle + V_{v_k}(u) -  V_{v_{k+1}}(u) - V_{v_k}(v_{k+1}).
\end{eqnarray*}
Bearing in mind the Bregman divergence property $2V_x(y) \geq \|x-y\|_p^2$:
\begin{eqnarray*}
    \langle -\gamma \rho\Delta_k, v_k  - u \rangle \leq \langle -\gamma \rho\Delta_k, v_k - v_{k+1} \rangle + V_{v_k}(u) -  V_{v_{k+1}}(u) - \frac{1}{2}\|v_{k+1} - v_k\|_p^2.
\end{eqnarray*}
Using the definition of the conjugate norm:
\begin{eqnarray*}
    \langle -\gamma \rho\Delta_k, v_k  - u \rangle &\leq& \|\gamma \rho\Delta_k\|_q\cdot\| v_k - v_{k+1} \|_p + V_{v_k}(u) -  V_{v_{k+1}}(u) - \frac{1}{2}\|v_{k+1} - v_k\|_p^2 \nonumber\\ 
    &\leq& \frac{\rho^2 \gamma^2}{2}\|\Delta_k\|_q^2 + V_{v_k}(u) -  V_{v_{k+1}}(u).
\end{eqnarray*}
Summing over $k$ from $0$ to $N$:
\begin{eqnarray*}
    \sum\limits_{k=0}^N \gamma \rho \langle -\Delta_k,  v_k - u \rangle \leq V_{v_1}(u) -  V_{v_{N+1}}(u) + \frac{\rho^2\gamma^2}{2}\sum\limits_{k=0}^N \|\Delta_k\|_q^2.
\end{eqnarray*}
Notice that $V_x(y) \geq 0$ and $V_{v_1}(u) \leq \nicefrac{\Omega^2}{2}$:
\begin{eqnarray}
\label{temp192}
    \sum\limits_{k=0}^N \gamma \langle -\Delta_k,  v_k - u \rangle \leq \frac{\Omega^2}{2\rho} + \frac{\rho \gamma^2}{2}\sum\limits_{k=0}^N \|\Delta_k\|_q^2.
\end{eqnarray}
Substituting \eqref{temp192} into \eqref{temp177}:
\begin{eqnarray*}
     D(u) &\leq& \sum\limits_{k=0}^N \gamma \langle \Delta_k, v_k - z_k  \rangle +
    \frac{\Omega^2}{2\rho} + \frac{\rho \gamma^2}{2}\sum\limits_{k=0}^N \|\Delta_k\|_q^2.  
\end{eqnarray*}
The right side is independent of $u$, then
\begin{eqnarray}
    \label{5t5}
     \max_{u \in \mathcal{Z}} D(u) &\leq& \sum\limits_{k=0}^N \gamma \langle \Delta_k, v_k - z_k  \rangle + 
    \frac{\Omega^2}{2\rho} + \frac{\rho \gamma^2}{2}\sum\limits_{k=0}^N \|\Delta_k\|_q^2.  
\end{eqnarray}
Taking the full expectation:
\begin{eqnarray*}
    \mathbb{E}\left[\max_{u \in \mathcal{Z}} D(u)\right] &\leq& \mathbb{E}\left[\sum\limits_{k=1}^N \gamma \langle \Delta_k, v_k - z_k \rangle\right] + 
    \frac{\Omega^2}{2\rho} + \frac{\rho \gamma^2}{2}\mathbb{E}\left[\sum\limits_{k=0}^N \|\Delta_k\|_q^2\right].  
\end{eqnarray*}
Using the independence of $\mathbf{e}_1, \ldots, \mathbf{e}_N, \xi^{\pm}_1, \ldots, \xi^{\pm}_N$, we have
\begin{eqnarray*}
    \mathbb{E}\left[\max_{u \in \mathcal{Z}} D(u)\right] &\leq& \mathbb{E}\left[\sum\limits_{k=0}^N \gamma \mathbb{E}_{\mathbf{e}_k, \xi_k}\left[\langle \Delta_k, v_k - z_k \rangle\right]\right]  + 
    \frac{\Omega^2}{2\rho} + \frac{\rho \gamma^2}{2}\mathbb{E}\left[\sum\limits_{k=0}^N \|\Delta_k\|_q^2\right]
    .  
\end{eqnarray*}
Note that $v_k - z_k$ does not depend on $\mathbf{e}_k$, $\xi_k$. Then 
\begin{eqnarray*}
    \mathbb{E}\left[\max_{u \in \mathcal{Z}} D(u)\right] &\leq& \mathbb{E}\left[\sum\limits_{k=0}^N \gamma \langle \mathbb{E}_{\mathbf{e}_k, \xi_k}\left[\Delta_k\right], v_k - z_k \rangle\right] + 
    \frac{\Omega^2}{2\rho} + \frac{\rho \gamma^2}{2}\mathbb{E}\left[\sum\limits_{k=0}^N \|\Delta_k\|_q^2\right].  
\end{eqnarray*}
By \eqref{temp986} and definition of diameter $\Omega$ we get
\begin{eqnarray*}
    \mathbb{E}\left[\max_{u \in \mathcal{Z}} D(u)\right] &\leq& \frac{\Delta \Omega n a_q}{\tau}\sum\limits_{k=0}^N \gamma + \frac{\Omega^2}{2\rho} + \frac{\rho \gamma^2}{2}\sum\limits_{k=0}^N \mathbb{E}\left[\|\Delta_k\|_q^2\right].  
\end{eqnarray*}
To prove the lemma, it remains to estimate $\mathbb{E}\left[\|\Delta_k\|_q^2\right]$:
\begin{eqnarray*}
    \mathbb{E}\left[\|\Delta_k\|_q^2\right] &\leq& \mathbb{E}\left[\|g(z_k, \mathbf{e}_k, \tau,  \xi^{\pm}_k, \delta^{\pm}_k) -  \tilde\nabla\hat{\varphi}(z_k)\|_q^2\right]  \nonumber\\
    &\leq& 2 \mathbb{E}\left[\|g(z_k, \mathbf{e}_k, \tau,  \xi^{\pm}_k, \delta^{\pm}_k)\|_q^2\right] +  2 \mathbb{E}\left[\|\tilde\nabla\hat{\varphi}(z_k)\|_q^2\right]\nonumber\\
    &\leq& 2 \mathbb{E}\left[\|g(z_k, \mathbf{e}_k, \tau,  \xi^{\pm}_k, \delta^{\pm}_k)\|_q^2\right] +  2 \mathbb{E}\left[\left\|\frac{n\left(\varphi(z + \tau \mathbf{e}) -  \varphi(z - \tau \mathbf{e})\right)}{2\tau}\mathbf{e}\right\|_q^2\right]
    .  
\end{eqnarray*}
Using Lemma \ref{beznos},  we have $\mathbb{E}\left[\|\Delta_k\|_q^2\right] \leq 4M^2_{all}$, whence
\begin{eqnarray*}
    \mathbb{E}\left[\max_{u \in \mathcal{Z}} D(u)\right] &\leq& 
    \frac{\Omega^2}{2\rho} + \frac{\gamma (N+1)\Delta \Omega n a_q}{\tau} + 2\rho\gamma^2 M^2_{all}(N+1).  
\end{eqnarray*}
Taking $\rho = \nicefrac{1}{2}$ ends the proof of lemma.
\EndProof
\end{proof}

\eqref{t4t} with this lemma gives
\begin{eqnarray*}
    \mathbb{E}\left[\varepsilon_{sad}(\bar z_N)\right] &\leq&
    \frac{3\Omega^2}{2 \gamma (N+1)} + \frac{3\gamma M^2_{all}}{2} + \frac{\Delta \Omega n a_q}{\tau} + 2\tau M.
\end{eqnarray*} 
This completes the proof of the theorem.
\EndProof
\end{proof}

\textbf{Theorem 2.} 
Let problem \eqref{problem} with function $\varphi(x,y)$ be solved using Algorithm \ref{alg1} with $V_z(w) = \frac{1}{2}\|z-w\|^2_2$ and the oracle \eqref{eq:PrSt3}. Assume, that the set $\mathcal{Z}$, the function  $\varphi(x,y)$ and its inexact modification $\widetilde{\varphi}(x,y)$ satisfy Assumptions 1, 2, 3, 4. Denote by $N$ the number of iterations and $\gamma_k = \frac{1}{\mu k}$. Then the rate of convergence is given by the following expression:
\begin{eqnarray*}
\EE\left[\varphi(\bar x_N, y^*) - \varphi(x^*, \bar y_N)\right]
    &\leq& \frac{M^2_{all} \log (N+1)}{2 \mu (N+1)} + \frac{\Delta n\Omega}{\tau} + 2\tau M
\end{eqnarray*}

\begin{proof}
We start this proof from substituting definition of $g_k$ and $u = z^*$ in \eqref{temp2}:
\begin{eqnarray*}
    2\gamma_k\langle g(z_k, \mathbf{e}_k, \tau,  \xi^{\pm}_k) , z_{k} - z^*\rangle &\leq&
    \|z_k - z^*\|^2 - \|z_{k+1} - z^*\|^2 + \gamma^2_k\| g(z_k, \mathbf{e}_k, \tau,  \xi^{\pm}_k) \|^2.
\end{eqnarray*}
With small rearrangement
\begin{eqnarray*}
    2\gamma_k \langle \tilde \nabla \hat\varphi(z_k) , z_{k} - z^*\rangle
    &\leq&
    \|z_k - z^*\|^2 - \|z_{k+1} - z^*\|^2 + \gamma^2_k\| g(z_k, \mathbf{e}_k, \tau,  \xi^{\pm}_k) \|^2 \\
    &&+ 2\gamma_k\langle \tilde \nabla \hat{\varphi}(z_k) - g(z_k, \mathbf{e}_k, \tau,  \xi^{\pm}_k) , z_{k} - z^*\rangle.
\end{eqnarray*}
On the other hand with \eqref{lemma1_0} and Lemma 3 we get
\begin{eqnarray*}
    \varphi(x_k, y^*) - \varphi(x^*, y_k) &=& \hat \varphi(x_k, y^*) + |\varphi(x_k, y^*) - \hat \varphi(x_k, y^*)| \\
    &&- \hat \varphi(x^*, y_k) + |\varphi(x^*, y_k) - \hat \varphi(x^*, y_k)|   \\
    &\leq & \hat \varphi(x_k, y^*) - \hat \varphi(x^*, y_k) + 2\tau M\\
    &\leq & \hat \varphi(x_k, y^*) - \hat \varphi(x_k, y_k) + \hat \varphi(x_k, y_k) - \hat \varphi(x^*, y_k) + 2\tau M\\
    &\leq & \la -\nabla_y \hat\varphi(x_k,y_k), y_k-y^*\ra - \frac{\mu}{2} \|y_k-y^* \|^2 \\
    &&+  \la -\nabla_x \hat\varphi(x_k,y_k), x_k-x^*\ra - \frac{\mu}{2} \|x_k-x^* \|^2 + 2\tau M \\
   & = & \la \widetilde{\nabla} \hat\varphi(z_k), z_k-z^*\ra - \frac{\mu}{2} \|z_k-z^* \|^2 + 2\tau M.
\end{eqnarray*}
By connecting we have
\begin{eqnarray*}
    2\gamma_k (\varphi(x_k, y^*) - \varphi(x^*, y_k))
    &\leq&
    (1- \mu\gamma_k)\|z_k - z^*\|^2 - \|z_{k+1} - z^*\|^2 + \gamma^2_k\| g(z_k, \mathbf{e}_k, \tau,  \xi^{\pm}_k) \|^2_2 \\
    &&+ 2\gamma_k\langle \tilde \nabla \hat{\varphi}(z_k) - g(z_k, \mathbf{e}_k, \tau,  \xi^{\pm}_k) , z_{k} - z^*\rangle + 4 \gamma_k\tau M.
\end{eqnarray*}
Taking the total expectation and taking into account that $z_{k} - z^*$ does not depend on $\mathbf{e}_k, \xi_k$:
\begin{eqnarray*}
    \EE[\varphi(x_k, y^*) - \varphi(x^*, y_k)]
    &\leq&
    \left(\frac{1}{2\gamma_k}- \frac{\mu}{2}\right)\EE\|z_k - z^*\|^2  \\
    && - \frac{1}{2\gamma_k}\EE\|z_{k+1} - z^*\|^2 + \frac{\gamma_k}{2}\EE\| g(z_k, \mathbf{e}_k, \tau,  \xi^{\pm}_k) \|^2_2 \\
    &&+ \EE\langle \EE_{\mathbf{e}_k, \xi_k}[\tilde \nabla \hat{\varphi}(z_k) - g(z_k, \mathbf{e}_k, \tau,  \xi^{\pm}_k)] , z_{k} - z^*\rangle + 2\tau M.
\end{eqnarray*}
With \eqref{boundlem0}, \eqref{temp986} with $a_q = 1$ (Euclidean case) we get
\begin{eqnarray*}
    \EE[\varphi(x_k, y^*) - \varphi(x^*, y_k)]
    &\leq&
    \left(\frac{1}{2\gamma_k}- \frac{\mu}{2}\right)\EE\|z_k - z^*\|^2 - \frac{1}{2\gamma_k}\EE\|z_{k+1} - z^*\|^2 \\
    &&+ \frac{\gamma_k M^2_{all}}{2} + \frac{\Delta n\Omega}{\tau} + 2\tau M.
\end{eqnarray*}
Summing over all $k$ from $0$ to $N$, we have
\begin{eqnarray*}
    \EE\left[\sum\limits_{k=0}^N\varphi(x_k, y^*) - \sum\limits_{k=0}^N\varphi(x^*, y_k)\right]
    &\leq&
    \sum\limits_{k=1}^{N-1} \left(\frac{1}{2\gamma_k} - \frac{1}{2\gamma_{k-1}} - \frac{\mu}{2}\right)\EE\|z_k - z^*\|^2\\
    &&+ \left(\frac{1}{2\gamma_0} - \frac{\mu}{2}\right) \|z_0 - z^*\|^2  + \frac{M^2_{all}}{2} \sum\limits_{k=0}^N \gamma_k \\
    &&+ \frac{\Delta n\Omega (N+1)}{\tau} + 2\tau M(N+1).
\end{eqnarray*}
With $\gamma_k = \frac{1}{\mu (k+1)}$ we get
\begin{eqnarray*}
    \EE\left[\sum\limits_{k=0}^N\varphi(x_k, y^*) - \sum\limits_{k=0}^N\varphi(x^*, y_k)\right]
    &\leq& \frac{M^2_{all} \log (N+1)}{2 \mu} + \frac{\Delta n\Omega (N+1)}{\tau} + 2\tau M(N+1).
\end{eqnarray*}
It remains only to apply Jensen's inequality to the left-hand side:
\begin{eqnarray*}
    \EE\left[\varphi(\bar x_N, y^*) - \varphi(x^*, \bar y_N)\right]
    &\leq& \frac{M^2_{all} \log (N+1)}{2 \mu(N+1)} + \frac{\Delta n\Omega}{\tau} + 2\tau M.
\end{eqnarray*}

\EndProof
\end{proof}

\begin{lemma}
    For $\tilde g_k \eqdef \tilde g(z_k, z_{k-1},  \mathbf{e}_k, \mathbf{e}_{k-1},  \xi_k, \xi_{k-1})$ defined in \eqref{eq:PrSt5} under Assumptions 2 and 4 the following inequalities holds:
    \begin{eqnarray*}
    \mathbb{E}\left[\|\tilde g_k\|^2_2\right] 
    &\leq& \alpha^k\mathbb{E}\left[\|\tilde g_{0}\|^2_2 \right] + \left(\frac{12n^2 (\sigma^2 + \Delta^2)}{\tau^2}+ 12n^2M^2\right) \frac{1}{1 - \alpha},
    \end{eqnarray*}
     where  $\alpha = \frac{6\gamma^2 n^2M^2}{\tau^2} < 1$.
\end{lemma}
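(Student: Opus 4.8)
The plan is to establish a one-step recursion of the form $\mathbb{E}[\|\tilde g_k\|_2^2] \leq \alpha\,\mathbb{E}[\|\tilde g_{k-1}\|_2^2] + C$ with $C = 12n^2M^2 + \tfrac{12n^2(\sigma^2+\Delta^2)}{\tau^2}$, and then unroll it as a geometric series. First I would observe that the direction block $((\mathbf{e}_k)_x,\,-(\mathbf{e}_k)_y)^\top$ has Euclidean norm exactly $\|\mathbf{e}_k\|_2 = 1$, so that $\|\tilde g_k\|_2^2 = \tfrac{n^2}{\tau^2}\big(\tilde\varphi(z_k+\tau\mathbf{e}_k,\xi_k) - \tilde\varphi(z_{k-1}+\tau\mathbf{e}_{k-1},\xi_{k-1})\big)^2$. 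Substituting the oracle model \eqref{PrSt1} splits the scalar difference into a true function-value difference, a stochastic-noise difference $\xi_k-\xi_{k-1}$, and a deterministic-noise difference $\delta(z_k+\tau\mathbf{e}_k)-\delta(z_{k-1}+\tau\mathbf{e}_{k-1})$; applying \eqref{eq:squared_sum} with $m=3$ bounds the square by three times the sum of the three squared pieces.

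The two noise pieces are routine: using $(a-b)^2\leq 2a^2+2b^2$ together with $\mathbb{E}[\xi^2]\leq\sigma^2$ and $|\delta|\leq\Delta$ from \eqref{eq:PrSt2} gives $\mathbb{E}[(\xi_k-\xi_{k-1})^2]\leq 4\sigma^2$ and $(\delta(z_k+\tau\mathbf{e}_k)-\delta(z_{k-1}+\tau\mathbf{e}_{k-1}))^2\leq 4\Delta^2$, which after multiplication by $\tfrac{3n^2}{\tau^2}$ contribute exactly $\tfrac{12n^2(\sigma^2+\Delta^2)}{\tau^2}$.

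The main obstacle is the true function-value difference $\varphi(z_k+\tau\mathbf{e}_k)-\varphi(z_{k-1}+\tau\mathbf{e}_{k-1})$, since, unlike the symmetric estimator \eqref{eq:PrSt3}, the two evaluation points differ both in their base point and in their probing direction. I would split it as $[\varphi(z_k+\tau\mathbf{e}_k)-\varphi(z_{k-1}+\tau\mathbf{e}_k)] + [\varphi(z_{k-1}+\tau\mathbf{e}_k)-\varphi(z_{k-1}+\tau\mathbf{e}_{k-1})]$ and bound each bracket by $M$-Lipschitz continuity (Assumption 2). For the base-point term the crucial point is that in the Euclidean setup the prox step is the projection $z_k = \Pi_{\mathcal{Z}}(z_{k-1}-\gamma\tilde g_{k-1})$; since the projection is non-expansive and $\Pi_{\mathcal{Z}}(z_{k-1})=z_{k-1}$, we get $\|z_k-z_{k-1}\|_2\leq\gamma\|\tilde g_{k-1}\|_2$, so this bracket is at most $M\gamma\|\tilde g_{k-1}\|_2$ — precisely what converts the estimate into a recursion in $\|\tilde g_{k-1}\|_2$. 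For the direction term I would use $M\tau\|\mathbf{e}_k-\mathbf{e}_{k-1}\|_2$; obtaining the clean constant then relies on passing to expectations and using that $\mathbf{e}_k,\mathbf{e}_{k-1}$ are independent and zero-mean on the sphere, so $\mathbb{E}[\|\mathbf{e}_k-\mathbf{e}_{k-1}\|_2^2]=2$ (rather than the crude deterministic bound $4$). Combining the two brackets via $(A+B)^2\leq 2A^2+2B^2$ and multiplying by $\tfrac{3n^2}{\tau^2}$ yields $\alpha\,\mathbb{E}[\|\tilde g_{k-1}\|_2^2] + 12n^2M^2$ with $\alpha=\tfrac{6\gamma^2n^2M^2}{\tau^2}$.

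Collecting the three contributions gives the one-step recursion $\mathbb{E}[\|\tilde g_k\|_2^2]\leq \alpha\,\mathbb{E}[\|\tilde g_{k-1}\|_2^2] + 12n^2M^2 + \tfrac{12n^2(\sigma^2+\Delta^2)}{\tau^2}$. Iterating from $k$ down to $0$ produces $\alpha^k\mathbb{E}[\|\tilde g_0\|_2^2]$ plus $C\sum_{j=0}^{k-1}\alpha^j$, and since $\alpha<1$ the geometric sum is bounded by $\tfrac{1}{1-\alpha}$, which gives the stated inequality. The one place to be careful is the conditioning: both the bound $\|z_k-z_{k-1}\|_2\leq\gamma\|\tilde g_{k-1}\|_2$ and the computation of $\mathbb{E}[\|\mathbf{e}_k-\mathbf{e}_{k-1}\|_2^2]$ must be read against the filtration generated by the past directions and noises, which is legitimate because $\mathbf{e}_k$ and $\xi_k$ are drawn independently of the history and of each other.
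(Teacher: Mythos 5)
Your proposal is correct and follows essentially the same route as the paper's proof: the same decomposition of the function-value difference into a base-point term (bounded via non-expansiveness of the Euclidean projection, yielding $\gamma\|\tilde g_{k-1}\|_2$) and a direction term (bounded via $\mathbb{E}[\|\mathbf{e}_k-\mathbf{e}_{k-1}\|_2^2]=2$), the same noise bounds, and the same recursion unrolled into a geometric series. The only cosmetic difference is that you split the scalar into three pieces and then apply $(a-b)^2\leq 2(a^2+b^2)$, whereas the paper applies the squared-sum inequality to six pieces at once; the constants come out identical.
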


\begin{proof}
\begin{eqnarray*}
    \mathbb{E}\left[\|\tilde g(z_k, z_{k-1},  \mathbf{e}_k, \mathbf{e}_{k-1},  \xi_k, \xi_{k-1})\|^2_2\right] \hspace{9cm}\nonumber\\
    = \frac{n^2}{\tau^2}\mathbb{E}\left[\left(\tilde \varphi(z_k + \tau \mathbf{e}_k,  \xi_k) -  \tilde \varphi(z_{k-1} + \tau \mathbf{e}_{k-1},  \xi_{k-1})\right)^2\right] \hspace{5.75cm}\nonumber\\
    = \frac{n^2}{\tau^2}\mathbb{E}\left[\left(\varphi(z_k + \tau \mathbf{e}_k) + \xi_k + \delta(z_k + \tau \mathbf{e}_k) -  \varphi(z_{k-1} + \tau \mathbf{e}_{k-1}) -  \xi_{k-1} - \delta(z_{k-1} + \tau \mathbf{e}_{k-1})\right)^2\right]. 
\end{eqnarray*}
With a simple fact \eqref{eq:squared_sum}, we get
\begin{eqnarray*}
    \mathbb{E}\left[\|\tilde g_k\|^2_2\right] &\leq& \frac{6n^2}{\tau^2}\mathbb{E}\left[\xi^2_k + \delta^2(z_k + \tau \mathbf{e}_k) + \xi^2_{k-1} + \delta^2(z_{k-1} + \tau \mathbf{e}_{k-1})\right]\nonumber\\
    &&+ \frac{6n^2}{\tau^2}\mathbb{E}\left[(\varphi(z_k + \tau \mathbf{e}_k) - \varphi(z_{k-1} + \tau \mathbf{e}_k))^2 \right]\nonumber\\
    &&+ \frac{6n^2}{\tau^2}\mathbb{E}\left[(\varphi(z_{k-1} + \tau \mathbf{e}_{k-1}) - \varphi(z_{k-1} + \tau \mathbf{e}_k))^2 \right]. 
\end{eqnarray*}
Next we use \eqref{bound} and \eqref{eq:PrSt2} and have
\begin{eqnarray*}
    \mathbb{E}\left[\|\tilde g_k\|^2_2\right] &\leq& \frac{12n^2 (\sigma^2 + \Delta^2)}{\tau^2}+ \frac{6n^2M^2}{\tau^2}\mathbb{E}\left\|z_k - z_{k-1}\|^2_2 \right]+ 6n^2M^2\mathbb{E}\left[\|\mathbf{e}_{k-1} - \mathbf{e}_k\|^2_2 \right] \nonumber\\
    &\leq& \frac{12n^2 (\sigma^2 + \Delta^2)}{\tau^2}+ \frac{6n^2M^2}{\tau^2}\mathbb{E}\left\|z_k - z_{k-1}\|^2_2 \right]+ 12n^2M^2. 
\end{eqnarray*}
Considering the step of Algorithm 1 we can rewrite as follows:
\begin{eqnarray*}
    \mathbb{E}\left[\|\tilde g_k\|^2_2\right] 
    &\leq& \frac{12n^2 (\sigma^2 + \Delta^2)}{\tau^2}+ \frac{6\gamma^2 n^2M^2}{\tau^2}\mathbb{E}\left\|\tilde g_{k-1}\|^2_2 \right]+ 12n^2M^2. 
\end{eqnarray*}
Then we run recursion
\begin{eqnarray*}
    \mathbb{E}\left[\|\tilde g_k\|^2_2\right] 
    &\leq& \left(\frac{6\gamma^2 n^2M^2}{\tau^2}\right)^k\mathbb{E}\left\|\tilde g_{0}\|^2_2 \right] + \left(\frac{12n^2 (\sigma^2 + \Delta^2)}{\tau^2}+ 12n^2M^2\right) \sum\limits_{i=0}^{k-1} \left(\frac{6\gamma^2 n^2M^2}{\tau^2}\right)^i. 
\end{eqnarray*}
With $\alpha = \frac{6\gamma^2 n^2M^2}{\tau^2} < 1$
\begin{eqnarray*}
    \mathbb{E}\left[\|\tilde g_k\|^2_2\right] 
    &\leq& \alpha^k\mathbb{E}\left\|\tilde g_{0}\|^2_2 \right] + \left(\frac{12n^2 (\sigma^2 + \Delta^2)}{\tau^2}+ 12n^2M^2\right) \frac{1}{1 - \alpha}. 
\end{eqnarray*}
\EndProof
\end{proof}

\textbf{Theorem 3.}
Let problem \eqref{problem} with function $\varphi(x,y)$ be solved using Algorithm \ref{alg1} with $V_z(w) = \frac{1}{2}\|z-w\|^2_2$ and the oracle \eqref{eq:PrSt5}. Assume, that the set $\mathcal{Z}$, the convex-concave function $\varphi(x,y)$ and its inexact modification $\widetilde{\varphi}(x,y)$ satisfy Assumptions 1, 2, 4. Denote by $N$ the number of iterations and $\gamma_k = \gamma= const$. Then the rate of convergence is given by the following expression:
\begin{eqnarray*}
    \mathbb{E}\left[\varepsilon_{sad}(\bar z_{N})\right] &\leq&
    \frac{3\Omega^2}{2\gamma(N+1)} + \frac{3\gamma}{2(N+1)(1 - \alpha)} \mathbb{E}\left[\|\tilde g_{0}\|^2_2 \right] \nonumber\\
    && + \frac{3\gamma}{2(1 - \alpha)} \left(\frac{12n^2 (\sigma^2 + \Delta^2)}{\tau^2}+ 12n^2M^2\right)  + 2\tau M  + \frac{\Delta \Omega n}{\tau}.
\end{eqnarray*} 
$\Omega$ is a diameter of $\mathcal{Z}$, $\alpha = \frac{6\gamma^2 n^2M^2}{\tau^2} < 1$.

\begin{proof} We begin our proof right away by obtaining the inequality similarly to \eqref{t4t} but by \eqref{boundlem1}, not \eqref{boundlem0}
\begin{eqnarray}
    \label{t4t1}
    \mathbb{E}\left[\varepsilon_{sad}(\bar z_N)\right] 
    &\leq& \frac{\Omega^2}{2\gamma(N+1)} + \frac{\gamma}{2(N+1)} \mathbb{E}\left[\|\tilde g_{0}\|^2_2 \right] \sum\limits_{k=0}^N \alpha^k  \nonumber\\
    && + \frac{\gamma}{2(1 - \alpha)} \left(\frac{12n^2 (\sigma^2 + \Delta^2)}{\tau^2}+ 12n^2M^2\right)  \nonumber\\
    &&+ \frac{1}{\gamma(N+1)}\mathbb{E}\left[\max_{u \in \mathcal{Z}} \tilde D(u)\right]  + 2\tau M  \nonumber\\
    &\leq& \frac{\Omega^2}{2\gamma(N+1)} + \frac{\gamma}{2(N+1)(1 - \alpha)} \mathbb{E}\left[\|\tilde g_{0}\|^2_2 \right] \nonumber\\
    && + \frac{\gamma}{2(1 - \alpha)} \left(\frac{12n^2 (\sigma^2 + \Delta^2)}{\tau^2}+ 12n^2M^2\right)  \nonumber\\
    &&+ \frac{1}{\gamma(N+1)}\mathbb{E}\left[\max_{u \in \mathcal{Z}} \tilde D(u)\right]  + 2\tau M, 
\end{eqnarray}
where $\tilde D(u) \eqdef \sum^N_{k = 0} \gamma \langle \tilde \Delta_k  ,u - z_{k}\rangle$ with $\tilde \Delta_k \eqdef \tilde g_k -  \tilde\nabla \tilde{\varphi}(z_k)$. Let estimate $\tilde D(u)$. For this we prove the following lemma:
\begin{lemma}
\begin{eqnarray}
    \label{191}
    \mathbb{E}\left[\max_{u \in \mathcal{Z}} \tilde D(u)\right] &\leq& \frac{\gamma(N+1)\Delta \Omega n}{\tau} + \Omega^2 + \frac{\gamma^2}{1 - \alpha} \|\tilde g_{0}\|^2_2 \nonumber\\
    &&+ \frac{\gamma^2 (N+1)}{1 - \alpha} \left(\frac{12n^2 (\sigma^2 + \Delta^2)}{\tau^2}+ 12n^2M^2\right). 
\end{eqnarray}
\end{lemma}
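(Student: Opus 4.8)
The plan is to mirror the proof of the analogous estimate \eqref{19} (the bound on $\mathbb{E}[\max_u D(u)]$ used for Theorem~1), specialized to the Euclidean geometry ($a_q=1$, $V_z(w)=\frac12\|z-w\|_2^2$) and to the residual-feedback estimator. First I would introduce an auxiliary ``shadow'' sequence by $v_0\eqdef z_0$ and $v_{k+1}\eqdef\mathrm{prox}_{v_k}(-\rho\gamma\tilde\Delta_k)$ with a free parameter $\rho>0$, and split $\tilde D(u)$ as
\[ \tilde D(u)=\gamma\sum_{k=0}^N\langle-\tilde\Delta_k,\,z_k-v_k\rangle+\gamma\sum_{k=0}^N\langle-\tilde\Delta_k,\,v_k-u\rangle. \]
The second sum is the part to be optimized over $u$, while the first sum carries the estimator bias; the two are treated separately.

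For the second sum I would repeat the prox-operator argument verbatim from the derivation of \eqref{temp192}: the optimality condition for $v_{k+1}=\mathrm{prox}_{v_k}(-\rho\gamma\tilde\Delta_k)$ combined with the Bregman identity \eqref{temp111}, the property $2V_x(y)\ge\|x-y\|_2^2$, the inequality $ab-\frac{b^2}{2}\le\frac{a^2}{2}$ and telescoping give, for every $u\in\mathcal Z$,
\[ \gamma\sum_{k=0}^N\langle-\tilde\Delta_k,\,v_k-u\rangle\le\frac{\Omega^2}{2\rho}+\frac{\rho\gamma^2}{2}\sum_{k=0}^N\|\tilde\Delta_k\|_2^2, \]
where I used $V_{v_{N+1}}(u)\ge0$ and $V_{v_0}(u)\le\frac{\Omega^2}{2}$. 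Since the right-hand side is independent of $u$, taking $\max_{u\in\mathcal Z}$ yields
\[ \max_{u\in\mathcal Z}\tilde D(u)\le\gamma\sum_{k=0}^N\langle\tilde\Delta_k,\,v_k-z_k\rangle+\frac{\Omega^2}{2\rho}+\frac{\rho\gamma^2}{2}\sum_{k=0}^N\|\tilde\Delta_k\|_2^2. \]

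Next I take full expectation. Both $v_k$ and $z_k$ depend only on $\tilde g_0,\dots,\tilde g_{k-1}$, hence are measurable with respect to the randomness up to iteration $k-1$ and, in particular, are independent of the fresh direction $\mathbf e_k$. Conditioning on that past gives $\mathbb E\langle\tilde\Delta_k,v_k-z_k\rangle=\mathbb E\langle\mathbb E_{\mathbf e_k}[\tilde\Delta_k],v_k-z_k\rangle$, and the bias bound \eqref{temp9861} together with Cauchy--Schwarz and $\|v_k-z_k\|_2\le\Omega$ bounds each term by $\frac{\Delta n\Omega}{\tau}$; summing over $k=0,\dots,N$ produces $\frac{\gamma(N+1)\Delta\Omega n}{\tau}$. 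For the quadratic sum I bound $\mathbb E[\|\tilde\Delta_k\|_2^2]\le2\mathbb E[\|\tilde g_k\|_2^2]+2\mathbb E[\|\tilde\nabla\hat\varphi(z_k)\|_2^2]\le4\mathbb E[\|\tilde g_k\|_2^2]$, where the smoothed-gradient term is controlled by Jensen applied to the representation \eqref{temp9851} (exactly how $\mathbb E[\|\Delta_k\|^2]\le4M^2_{all}$ arose for Theorem~1). Invoking \eqref{boundlem1} and summing the geometric series $\sum_{k=0}^N\alpha^k\le\frac{1}{1-\alpha}$ then gives $\sum_{k=0}^N\mathbb E[\|\tilde g_k\|_2^2]\le\frac{\|\tilde g_0\|_2^2}{1-\alpha}+\frac{N+1}{1-\alpha}\big(\frac{12n^2(\sigma^2+\Delta^2)}{\tau^2}+12n^2M^2\big)$. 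Finally, choosing $\rho=\frac12$ turns $\frac{\Omega^2}{2\rho}$ into $\Omega^2$ and $\frac{\rho\gamma^2}{2}\cdot4\sum_k\mathbb E[\|\tilde g_k\|_2^2]$ into $\gamma^2\sum_k\mathbb E[\|\tilde g_k\|_2^2]$, reproducing the two $\frac{\gamma^2}{1-\alpha}$ terms of the claim.

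The main obstacle is the conditioning step. In Theorem~1 each $g_k$ was built from fresh independent randomness, so pulling the conditional expectation onto $\Delta_k$ was immediate; here $\tilde g_k$ and $\tilde g_{k-1}$ share the pair $(\mathbf e_{k-1},\xi_{k-1})$, so I must carefully check that $v_k-z_k$ depends only on indices $\le k-1$ and is independent of $\mathbf e_k$ before applying \eqref{temp9861}. The secondary delicate point is $\mathbb E[\|\tilde\Delta_k\|_2^2]\le4\mathbb E[\|\tilde g_k\|_2^2]$: the smoothed gradient $\tilde\nabla\hat\varphi(z_k)$ must be dominated by the estimator's second moment through Jensen rather than by a fixed constant, after which Lemma~\ref{beznos1} supplies the per-step bound and the geometric decay.
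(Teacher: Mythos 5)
Your proposal is correct and matches the paper's own proof essentially step for step: the same shadow sequence $v_{k+1}=\mathrm{prox}_{v_k}(-\rho\gamma\tilde\Delta_k)$ with the bias/telescoping split of $\tilde D(u)$, the same conditioning on the past (so that the bias bound \eqref{temp9861} yields the term $\frac{\gamma(N+1)\Delta\Omega n}{\tau}$), the same estimate $\mathbb{E}[\|\tilde\Delta_k\|_2^2]\le 4\,\mathbb{E}[\|\tilde g_k\|_2^2]$ followed by Lemma~\ref{beznos1} and the geometric series $\sum_k\alpha^k\le\frac{1}{1-\alpha}$, and the same final choice $\rho=\nicefrac{1}{2}$. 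The only differences are immaterial: your initialization $v_0=z_0$ versus the paper's $v_1=z_1$, and your Jensen-based domination of the smoothed-gradient term, which the paper handles with exactly the same degree of looseness.
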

\begin{proof}
Let's start with \eqref{5t5}. All other steps are done in the same way.

\begin{eqnarray*}
     \max_{u \in \mathcal{Z}} \tilde D(u) &\leq& \sum\limits_{k=0}^N \gamma \langle \tilde\Delta_k, v_k - z_k  \rangle + 
    \frac{\Omega^2}{2\rho} + \frac{\rho \gamma^2}{2}\sum\limits_{k=0}^N \|\tilde\Delta_k\|_2^2.  
\end{eqnarray*}
Taking the full expectation: 
\begin{eqnarray*}
    \mathbb{E}\left[\max_{u \in \mathcal{Z}} \tilde D(u)\right] &\leq& \mathbb{E}\left[\sum\limits_{k=1}^N \gamma \langle \tilde\Delta_k, v_k - z_k \rangle\right] + 
    \frac{\Omega^2}{2\rho} + \frac{\rho \gamma^2}{2}\mathbb{E}\left[\sum\limits_{k=0}^N \|\tilde \Delta_k\|_2^2\right].  
\end{eqnarray*}
Using the independence of $\mathbf{e}_1, \ldots, \mathbf{e}_N, \xi^{\pm}_1, \ldots, \xi^{\pm}_N$, we have
\begin{eqnarray*}
    \mathbb{E}\left[\max_{u \in \mathcal{Z}} D(u)\right] &\leq& \mathbb{E}\left[\sum\limits_{k=0}^N \gamma \mathbb{E}_{\xi_k}\left[\langle \tilde\Delta_k, v_k - z_k \rangle\right]\right] + \mathbb{E}\left[\sum\limits_{k=0}^N \gamma \mathbb{E}_{\mathbf{e}_k}\left[\langle \Delta_k - \tilde\Delta_k, v_k - z_k \rangle\right]\right] \nonumber\\ 
       &&+ 
    \frac{\Omega^2}{2\rho} + \frac{\rho \gamma^2}{2}\mathbb{E}\left[\sum\limits_{k=0}^N \|\Delta_k\|_2^2\right]
    .  
\end{eqnarray*}
Note that $v_k - z_k$ does not depend on $\mathbf{e}_k$, $\xi_k$ and $\mathbb{E}_{\xi_k} \tilde\Delta_k = 0$. Then 
\begin{eqnarray*}
    \mathbb{E}\left[\max_{u \in \mathcal{Z}} \tilde D(u)\right] &\leq& \mathbb{E}\left[\sum\limits_{k=0}^N \gamma \langle \mathbb{E}_{\mathbf{e}_k}\left[ \tilde\Delta_k\right], v_k - z_k \rangle\right] +\frac{\Omega^2}{2\rho} + \frac{\rho \gamma^2}{2}\mathbb{E}\left[\sum\limits_{k=0}^N \|\tilde \Delta_k\|_2^2\right]  \nonumber\\ 
    &=& \mathbb{E}\left[\sum\limits_{k=0}^N \gamma \langle \mathbb{E}_{\mathbf{e}_k}\left[\tilde\Delta_k\right], v_k - z_k \rangle\right] + 
    \frac{\Omega^2}{2\rho} + \frac{\rho \gamma^2}{2}\mathbb{E}\left[\sum\limits_{k=0}^N \|\tilde \Delta_k\|_2^2\right].  
\end{eqnarray*}
By \eqref{temp986} and definition of diameter $\Omega$ we get
\begin{eqnarray*}
    \mathbb{E}\left[\max_{u \in \mathcal{Z}} \tilde D(u)\right] &\leq& \frac{\Delta \Omega n}{\tau}\sum\limits_{k=0}^N \gamma + \frac{\Omega^2}{2\rho} + \frac{\rho \gamma^2}{2}\sum\limits_{k=0}^N \mathbb{E}\left[\|\tilde \Delta_k\|_2^2\right].  
\end{eqnarray*}
To prove the lemma, it remains to estimate $\mathbb{E}\left[\|\tilde \Delta_k\|_2^2\right]$:
\begin{eqnarray*}
    \mathbb{E}\left[\|\tilde \Delta_k\|_2^2\right] &\leq&  2 \mathbb{E}\left[\| \tilde g_k\|_2^2\right] +  2 \mathbb{E}\left[\left\|\frac{n\left(\varphi(z + \tau \mathbf{e}) -  \varphi(z - \tau \mathbf{e})\right)}{2\tau}\mathbf{e}\right\|_2^2\right]
    .  
\end{eqnarray*}
Using Lemma \ref{beznos1},  we have 
\begin{eqnarray*}
    \mathbb{E}\left[\max_{u \in \mathcal{Z}} \tilde D(u)\right] &\leq& \frac{\Delta \Omega n}{\tau}\sum\limits_{k=0}^N \gamma + \frac{\Omega^2}{2\rho} + 2\rho \gamma^2\sum\limits_{k=0}^N \alpha^k\mathbb{E}\left[\|\tilde g_{0}\|^2_2 \right] \nonumber\\
    &&+ 2\rho \gamma^2 (N+1)\left(\frac{12n^2 (\sigma^2 + \Delta^2)}{\tau^2}+ 12n^2M^2\right) \frac{1}{1 - \alpha}.  
\end{eqnarray*}
Taking $\rho = \nicefrac{1}{2}$ ends the proof of lemma.
\EndProof
\end{proof}
\eqref{t4t1} with this lemma gives
\begin{eqnarray*}
    \mathbb{E}\left[\varepsilon_{sad}(\bar z_N)\right] &\leq&
     \frac{3\Omega^2}{2\gamma(N+1)} + \frac{3\gamma}{2(N+1)(1 - \alpha)} \mathbb{E}\left[\|\tilde g_{0}\|^2_2 \right] \nonumber\\
    && + \frac{3\gamma}{2(1 - \alpha)} \left(\frac{12n^2 (\sigma^2 + \Delta^2)}{\tau^2}+ 12n^2M^2\right)  + 2\tau M  + \frac{\Delta \Omega n}{\tau}.
\end{eqnarray*} 
This completes the proof of the theorem.
\EndProof
\end{proof}

\section{Proofs for Section \ref{32}}
The proofs of the Theorems \ref{th_smooth} and \ref{th_smooth_strong} copy the proofs of the Theorems \ref{th_main} and \ref{th_main1} except for the usage Lemma \ref{lemma_ltau^2} instead of Lemma \ref{lemma1}. So the term $2M\tau$ in Theorems \ref{th_main} and \ref{th_main1} is  replaced by the term $L\tau^2$ in Theorems \ref{th_smooth} and \ref{th_smooth_strong}.

\section{Proofs for Section \ref{33}}

\textbf{Theorem 6.}
Let $\varphi \in {\cal F}_{\mu, \beta}(L)$ with $\mu$, $L > 0$ and $\beta > 2$. 
Let Assumption \ref{noise-ass} hold 
and let $\mathcal{Z}$ be a convex compact subset of $\R^n$.
Let $\varphi$ be $M$-Lipschitz on the Euclidean $\tau_1$-neighborhood of $\mathcal{Z}$ (see $\tau_k$ below). 

Then the rate of convergence is given by Algorithm \ref{algo1} with parameters
\begin{equation*}
    \tau_k = \left(\dfrac{3\kappa\sigma^2n}{2(\beta-1)(\kappa_\beta L)^2}\right)^{\frac{1}{2\beta}}k^{-\frac{1}{2\beta}}, \quad \alpha_k=\dfrac{2}{\mu k}, \quad k =1,\dots, N
\end{equation*}
satisfies 
\begin{equation*}
\begin{split}
    \E \left[ \varphi(\overline{x}_N, y^*) - \varphi(x^*, \overline{y}_N) \right] &\leq \max_{y\in \mathcal{Y}} \E \left[\varphi(\overline{x}_N, y)  \right] -
    \min_{x\in \mathcal{X}}\E \left[\varphi(x, \overline{y}_N) \right] \\
    &\leq \dfrac{1}{\mu} \left(n^{2-\frac{1}{\beta}}\dfrac{A_1}{N^{\frac{\beta-1}{\beta}}}+A_2\dfrac{n(1+\ln N)}{N} \right),
\end{split}
\end{equation*}
where $\overline{z}_N = \frac{1}{N}\sum\limits_{k=1}^N z_k$, $A_1=3\beta(\kappa \sigma^2)^{\frac{\beta-1}{\beta}}(\kappa_{\beta}L)^{\frac{2}{\beta}}$, $A_2=9\kappa G^2$, $\kappa_{\beta}$ and $\kappa$ are constants depending only on $\beta$, see \eqref{kappa-beta} and \eqref{kappa-squared}.

\begin{proof}
{\bf Step 1.}
Fix an arbitrary $z \in \mathcal{Z}$. As $z_{k+1}$ is the Euclidean projection we have $\|z_{k+1} - z\|^2 \leq \| z_k - \gamma_k \widetilde{g_k} - z \|^2 $ which is equivalent to 
\begin{equation}
    \langle \widetilde{g_k}, z_k-z \rangle \leq \dfrac{\| z_k - z \|^2 - \| z_{k+1} - z \|^2}{2\gamma_k} + \dfrac{\gamma_k}{2} \| \widetilde{g_k} \|^2.
\end{equation}

Using the strong convexity-concavity and combining $x$ and $y$ parts of the argument $z$ together  we have 
\begin{equation}
\begin{split}
    \varphi(x_k, y) - \varphi(x, y_k) 
    =& \varphi(x_k, y) - \varphi(x_k, y_k) + \varphi(x_k, y_k) - \varphi(x, y_k)  \\
    \leq & \la -\nabla_y \varphi(x_k,y_k), y_k-y\ra - \frac{\mu}{2} \|y_k-y \|^2 \\
    + & \la -\nabla_x \varphi(x_k,y_k), x_k-x\ra - \frac{\mu}{2} \|x_k-x \|^2 \\
    = & \la \widetilde{\nabla} \varphi(z_k), z_k-z\ra - \frac{\mu}{2} \|z_k-z \|^2.
\end{split}
\end{equation}

Combining the last two inequations we obtain
\begin{equation}
\begin{split}
\varphi(x_k, y) - \varphi(x, y_k) 
\leq & \langle \widetilde{\nabla} \varphi(z_k) - \widetilde{g_k}, z_k - z \rangle 
+ \dfrac{\| z_k - z \|^2 - \| z_{k+1} - z \|^2}{2\gamma_k} \\
 & + \dfrac{\gamma_k}{2} \| \widetilde{g_k} \|^2
    - \dfrac{\mu}{2} \| z_k - z \|^2. \\
\end{split}
\end{equation}

Taking conditional expectation given $z_k$ with respect to $r_{k}$, $\xi^+_{k}$ and $\xi^-_{k}$ we obtain 
\begin{equation}\label{regret_bound}
\begin{split}
\varphi(x_k, y) - \varphi(x, y_k) \leq & \langle \widetilde{\nabla} \varphi(z_k) -  \E \left[\widetilde{g_k} | z_k \right] , z_k - z \rangle  + \dfrac{\gamma_k}{2} \E \left[ \| \widetilde{g_k} \|^2 | z_k \right] \\
& +  \dfrac{\| z_k - z \|^2 - \E \left[\| z_{k+1} - z \|^2 | z_k \right]}{2\gamma_k} - \dfrac{\mu}{2} \| z_k - z \|^2. \\
\end{split}
\end{equation}

{\bf Step 2 (Bounding bias term).} Our aim is to bound the first term in \eqref{regret_bound}, namely $\langle \widetilde{\nabla} \varphi(z_k) -  \E \left[\widetilde{g_k} | z_k \right] , z_k - z \rangle $.
Using the Taylor expansion we have
\begin{equation}
\begin{split}
    \varphi \left(z_k + \tau_k r_k e_k\right)
    =& \varphi (z_k) + \langle \nabla \varphi(z_k), \tau_k r_k e_k \rangle \\
    +& \sum_{2\leq |m| \leq l} \dfrac{(\tau_k r_k)^{|m|}}{m!} D^{(m)}\varphi (z_k) e_k^{m} + R(\tau_k r_k e_k),
\end{split}
\end{equation}
where by assumption $|R(\tau_k r_k e_k)| \leq L \|\tau_k r_k e_k\|^{\beta} = L (\tau_k \cdot |r_k|)^{\beta} $.
Thus, 
\begin{equation}\label{grad_taylor_expansion}
\begin{split}
    \widetilde{g_k} = &
     \Bigl( \langle \nabla \varphi(x_k), \tau_k r_k e_k \rangle 
    + \sum_{2\leq |m| \leq l, |m| \text{ odd}} \dfrac{(\tau_k r_k)^{|m|}}{m!} D^{(m)}\varphi(z_k) e_k^{m}\\
    &+ \frac{1}{2} R(\tau_k r_k e_k) - \frac{1}{2} R(-\tau_k r_k e_k) + \xi^+_k - \xi^-_k \Bigl) \dfrac{n}{\tau_k} K(r_k) \left(
    \begin{array}{c}
    (\mathbf{e}_k)_x\\
    -(\mathbf{e}_k)_y \\
    \end{array}\right) .
\end{split}
\end{equation}

Using the properties of the smoothing kernel $K$, independence of $\mathbf{e}_k$ and $r_k$ (Assumption \ref{noise-ass}) and the fact that $\E\left[ e_k e_k^{T} \right] = \frac 1 n \mathbb{I}_{n \times n}$ we obtain
\begin{equation}\label{first_order_term}
    \E_{e_k, r_k} \left[ \left\langle \nabla \varphi(z_k), \tau_k r_k e_k \right\rangle \dfrac{n}{\tau_k} K(r_k) \left(
    \begin{array}{c}
    (\mathbf{e}_k)_x\\
    -(\mathbf{e}_k)_y \\
    \end{array}\right)
    \middle| z_k \right] = \widetilde{\nabla} \varphi(z_k).
\end{equation}

Using the fact that $\E \left[ r_k^{|m|} K(r_k) \right] = 0$ if $2 \leq |m| \leq l$ or $|m| = 0$ and Assumption \ref{noise-ass} we have
\begin{equation}\label{mid_order_term}
    \E\left[\Bigl(\sum_{2\leq |m| \leq l, |m| \text{ odd}} \dfrac{(\tau_k r_k)^{|m|}}{m!} D^{(m)}\varphi(z_k) e_k^{m}
    + \xi^+_k - \xi^-_k \Bigl) \dfrac{n}{\tau_k} K(r_k) \left(
    \begin{array}{c}
    (\mathbf{e}_k)_x\\
    -(\mathbf{e}_k)_y \\
    \end{array}\right) \middle|x_k \right]= 0.
\end{equation}

Substituting \eqref{grad_taylor_expansion}, \eqref{first_order_term} and \eqref{mid_order_term} in the first term in \eqref{regret_bound} and using the definition of $\kappa_{\beta}$ (see \eqref{kappa-beta}) we obtain
\begin{multline}\label{step2_prefinal}
    \left|   \langle \widetilde{\nabla} \varphi(z_k) -  \E \left[\widetilde{g_k} | z_k \right] , z_k - z \rangle \right|
     = \\
     =  \left| \E\left[ \left( \frac{1}{2} R(\tau_k r_k e_k) - \frac{1}{2} R(-\tau_k r_k e_k) \right) \dfrac{n}{\tau_k} K(r_k) \left\la \left(
    \begin{array}{c}
    (\mathbf{e}_k)_x\\
    -(\mathbf{e}_k)_y \\
    \end{array}\right), z_k - z \right\ra \middle| z_k \right] \right| \\
     \leq L\tau_k^{\beta - 1} \cdot \E_{r_k} \left[ |r_k|^{\beta} K(r_k) \right] \cdot n\left|\E_{e_k} \left[\left\la \mathbf{e}_k, z_k-z \right\ra \middle| z_k \right] \right| \\
     \leq 
     \kappa_{\beta} L \sqrt{n} \tau_k^{\beta - 1} \| z_k - z\|, \\
\end{multline}
where in the last two inequalities the symmetry of Euclidean sphere and the fact from concentration measure theory that $\left|\E_{e} \left[\la e, s \ra \right] \right|^2 \leq \E_{e} \left[\la e, s \ra^2 \right] = \frac{\|s\|^2}{n} $  were used . Applying the inequality $ab \leq \nicefrac{1}{2} (a^2 + b^2)$ to the last expression in \eqref{step2_prefinal}  we finally get 
\begin{equation}\label{step2}
    \left|   \langle \widetilde{\nabla} \varphi(z_k) -  \E \left[\widetilde{g_k} | z_k \right] , z_k - z \rangle \right|
    \leq  \dfrac{(\kappa_{\beta} L)^2}{\mu} n \tau_k^{2(\beta-1)} + \dfrac{\mu}{4} \|z_k-z\|^2 .
\end{equation}

{\bf Step 3 (Bounding second moment of gradient estimator). }
Our aim is to estimate $\E \left[ \| \widetilde{g_k} \|^2 | z_k \right]$ which is the second term in \eqref{regret_bound}. The expectation here is with respect to $r_k$, $\xi^+_k$ and $\xi^-_k$. To lighten the presentation and without loss of generality we drop the lower script $k$ in all quantities. 

We have
\begin{equation}
\begin{split}
    \| \widetilde{g} \|^2 =& \dfrac{n^2}{4\tau^2} \left\|(\varphi(z+\tau r e) - \varphi(z - \tau r e) + \xi^+ - \xi^-) K(r) \left(
    \begin{array}{c}
    \mathbf{e}_x\\
    -\mathbf{e}_y \\
    \end{array}\right)\right\|^2 \\
    =& \dfrac{n^2}{4\tau^2} \left((\varphi(z+\tau r e) - \varphi(z - \tau r e) + \xi^+ - \xi^-) \right)^2 K^2(r).\\
\end{split}
\end{equation}

Using the inequality $(a+b+c)^2 \leq 3(a^2 + b^2 + c^2)$ and Assumption \ref{noise-ass} we get
\begin{equation}\label{dispersion_bound}
    \E \left[ \| \widetilde{g} \|^2 | z \right] 
    \leq \dfrac{3n^2}{4\tau^2} \left( \E\left[(\varphi(z+\tau r e) - \varphi(z - \tau r e))^2 K^2(r) \middle| z \right] + 2\kappa \sigma^2 \right).
\end{equation}



Using the symmetry of Euclidean unit sphere and the inequality $(a+b)^2 \ \leq 2(a^2 + b^2)$ we obtain
\begin{multline}
    \E \left[ \left(\varphi(z+e) -\varphi(z-e)\right)^2 \middle| z \right] = \E_e \left[ \left(\varphi(z+e) -\varphi(z-e)\right)^2  \right] \\
    \leq \E_e \left[ \left( \left(\varphi(z+e) - \E_e [\varphi(z+e)]\right) - \left(\varphi(z-e) - \E_e [\varphi(z-e)]\right) \right)^2  \right]\\
    \leq 2\E_e \left[ \left(\varphi(z+e) - \E_e [\varphi(z+e)]\right)^2 \right] + 2\E_e \left[ \left(\varphi(z-e) - \E_e [\varphi(z-e)]\right)^2 \right] \\
    \leq 2\sqrt{\E_e \left[ \left(\varphi(z+e) - \E_e [\varphi(z+e)]\right)^4 \right]} + 2\sqrt{\E_e \left[ \left(\varphi(z-e) - \E_e [\varphi(z-e)]\right)^4 \right]} \\
    \leq \dfrac{12M^2}{n},
\end{multline}
where in the last inequality Lemma \ref{lem:lemma_9_shamir} was used, so we have
\begin{equation}\label{delta_f_bound}
    \E \left[ \left(\varphi(z+\tau re) -\varphi(z-\tau re)\right)^2 \middle| z \right] \leq \dfrac{12(\tau r)^2 M^2}{n} \leq \dfrac{12 \tau^2 M^2}{n}.
\end{equation}

By substituting \eqref{delta_f_bound} into \eqref{dispersion_bound}, using independence of $e$ and $r$ and returning the lower script $k$ we finally get
\begin{equation}\label{step3}
    \E \left[ \| \widetilde{g_k} \|^2 | z_k \right] 
    \leq \kappa \left(9 n M^2  + \dfrac{3(n\sigma)^2}{2\tau_k^2} \right).
\end{equation}

{\bf Step 4. } Let $\rho_k^2$ denote full expectation $\E[\|z_k-z\|^2]$. Substituting \eqref{step2} and \eqref{step3} into \eqref{regret_bound}, taking full expectation we obtain
\begin{equation}\label{step4_begin}
\begin{split}
     \E [\varphi(x_k, y) - \varphi(x, y_k) ]
    \leq& \dfrac{(\kappa_{\beta} L)^2}{\mu} n \tau_k^{2(\beta-1)} 
    + \dfrac{\gamma_k}{2} \kappa \left(9 n M^2  + \dfrac{3(n\sigma)^2}{2\tau_k^2} \right) \\
    & +  \dfrac{\rho_k^2 - \rho_{k+1}^2}{2\gamma_k} - \left(\dfrac{\mu}{2} - \dfrac{\mu}{4} \right) \rho_k^2 .\\
\end{split}
\end{equation}

Using the convexity-concavity of $\varphi$ and \eqref{step4_begin} we have
\begin{equation}\label{step4_mid}
\begin{split}
    \E \left[ \varphi\left(\overline{x}_N, y\right) - \varphi\left(x, \overline{y}_N\right) \right] 
    &\leq \dfrac{1}{N} \sum_{k=1}^N \varphi\left(x_k, y\right) - \dfrac{1}{N} \sum_{k=1}^N \varphi\left(x, y_k\right) \\
    & \leq \dfrac{1}{N}\sum\limits_{k=1}^N \left( \dfrac{(\kappa_{\beta} L)^2}{\mu} n \tau_k^{2(\beta-1)} 
    + \dfrac{\gamma_k}{2} \kappa \left(9 n M^2  + \dfrac{3(n\sigma)^2}{2\tau_k^2} \right) \right) \\
    & + \dfrac{1}{N} \sum\limits_{k=1}^N \left( \dfrac{\rho_k^2 - \rho_{k+1}^2}{2\gamma_k} - \dfrac{\mu}{4} \rho_k^2 \right) .\\
\end{split}
\end{equation}

Let $\rho_{N+1}^2 = 0$. Then setting $\gamma_k = \dfrac{2}{\gamma k}$ yields 
\begin{equation}\label{sum_rho}
\begin{split}
    \sum_{k=1}^{N} \left( \dfrac{\rho_k^2 - \rho_{k+1}^2}{2\gamma_k} - \dfrac{\mu}{4} \rho_k^2 \right) &\leq
    \rho_1^2 \left( \dfrac{1}{2\gamma_1} - \dfrac{\mu}{4} \right) + \sum_{k=2}^{N+1} \rho_k^2 \left( \dfrac{1}{2\gamma_k} - \dfrac {1}{2\gamma_{k-1}}- \dfrac{\mu}{4}\right) \\
    &=\rho_1^2 \left( \dfrac{\mu}{4}-\dfrac{\mu}{4} \right) + \sum_{k=2}^{N+1} \rho_k^2 \left( \dfrac{\mu}{4}-\dfrac{\mu}{4} \right) = 0.
\end{split}
\end{equation}

Substituting \eqref{sum_rho} into \eqref{step4_begin} with $\gamma_k = \frac{2}{\mu k}$ we obtain 
\begin{equation}\label{step4_prefinal}
\begin{split}
    \mathbb{E}  [ \varphi\left(\overline{x}_N , y\right)& - \varphi\left(x, \overline{y}_N\right) ]  \\
    &\leq \dfrac{1}{\mu N} \sum_{k=1}^N   \left((\kappa_{\beta} L)^2 n \tau_k^{2(\beta-1)} 
    + \kappa \left(9 n M^2  + \dfrac{3(n\sigma)^2}{2\tau_k^2} \right) \dfrac{1}{k} \right)\\
    &= \dfrac{1}{\mu N} \sum_{k=1}^N \left( \left[n \cdot (\kappa_{\beta} L)^2 \tau_k^{2(\beta-1)} +
     n^2 \cdot \dfrac{3\kappa \sigma^2}{2k\tau_k^2} \right] +  \dfrac{9 \kappa n M^2}{k}\right).\\
\end{split}
\end{equation}

If $\sigma > 0$ then $\tau_k = {\left(\dfrac{3\kappa \sigma^2 n}{2(\beta -1)(\kappa_{\beta} L)^2}\right)}^{\frac{1}{2\beta}} k^{-\frac{1}{2\beta}}$ is the minimizer of square brackets. Plugging this $\tau_k$ in \eqref{step4_prefinal} and using two inequalities: for the expression in square brackets $\sum\limits_{k=1}^N k^{-1+\nicefrac{1}{\beta}} \leq \beta N^{\nicefrac{1}{\beta}}$ (if $\beta > 2$) and for the term after square brackets $\sum\limits_{k=1}^{N} \frac{1}{k} \leq 1 + \ln N$ we get
\begin{equation*}
    \mathbb{E}  [ \varphi\left(\overline{x}_N , y\right) - \varphi\left(x, \overline{y}_N\right) ] \leq \dfrac{1}{\mu} \left(n^{2-\frac{1}{\beta}}\dfrac{A_1}{N^{\frac{\beta-1}{\beta}}}+A_2\dfrac{n(1+\ln{N})}{N} \right).
\end{equation*}
with $A_1$ and $A_2$ from the formulation of Theorem \ref{theorem_highorder_smooth_strongly_convex}. 

Taking the minimum over $x$ and the maximum over $y$ we finally obtain
\begin{equation*}
\begin{split}
    \E \left[ \varphi(\overline{x}_N, y^*) - \varphi(x^*, \overline{y}_N) \right] &\leq \max_{y\in \mathcal{Y}} \E \left[\varphi(\overline{x}_N, y)  \right] -
    \min_{x\in \mathcal{X}}\E \left[\varphi(x, \overline{y}_N) \right] \\
    &\leq \dfrac{1}{\mu} \left(n^{2-\frac{1}{\beta}}\dfrac{A_1}{N^{\frac{\beta-1}{\beta}}}+A_2\dfrac{n(1+\ln N)}{N} \right).
\end{split}
\end{equation*}
\EndProof
\end{proof}

\textbf{Theorem 7.}
Let $\varphi \in {\cal F}_{\beta}(L)$ with $L > 0$ and $\beta > 2$. 
Let Assumption \ref{noise-ass} hold 
and let $\cal{Z}$ be a convex compact subset of $\R^n$.
Let $\varphi$ be $M$-Lipschitz on the Euclidean $\tau_1$-neighborhood of $\cal{Z}$ ($\tau_k$ is parameter from Theorem \ref{theorem_highorder_smooth_strongly_convex} for the regularized function $\varphi_\mu(z)$ whose  description is given below). Let $\overline{z}_N$ denote $\frac{1}{N}\sum\limits_{k=1}^N z_k$.

Let's define $N(\varepsilon)$:

\begin{equation*}
    N(\varepsilon)=\max\left\{ \left(R\sqrt{2A_1}\right)^{\frac{2\beta}{\beta-1}}\dfrac{n^{2+\frac{1}{\beta-1}}}{\varepsilon^{2+\frac{2}{\beta-1}}},\left(R\sqrt{2c'A_2}\right)^{2(1+\rho)}\dfrac{n^{1+\rho}}{\varepsilon^{2(1+\rho)}}\right\},
\end{equation*}
where  $A_1=3\beta(\kappa \sigma^2)^{\frac{\beta-1}{\beta}}(\kappa_{\beta}L)^{\frac{2}{\beta}}$, $A_2=9\kappa G^2$ -- constants from Theorem \ref{theorem_highorder_smooth_strongly_convex}, $\rho > 0$ -- arbitrarily small positive number, $c'$ -- constant which depends on $\rho$. 

Then the rate of convergence is given by the following expression:
\begin{equation}\label{error_less_than_eps}
    \E \left[ \varphi(\overline{x}_N, y^*) - \varphi(x^*, \overline{y}_N) \right] \leq \max_{y\in \mathcal{Y}} \E \left[\varphi(\overline{x}_N, y)  \right] -
    \min_{x\in \mathcal{X}}\E \left[\varphi(x, \overline{y}_N) \right] \leq \varepsilon
\end{equation}
after $N(\varepsilon)$ steps of Algorithm \ref{algo1} with settings from Theorem \ref{theorem_highorder_smooth_strongly_convex} for the regularized function: $\varphi_{\mu}(z):=\varphi(z)+\frac{\mu}{2} \| x-x_0 \|^2-\frac{\mu}{2} \| y - y_0 \|^2$, where $\mu \leq \frac{\varepsilon}{R^2}$, $R=\|z_0-z^*\|$, $z_0 \in \mathcal{Z}$ -- arbitrary point.

\begin{proof}
{\bf Step 1.} 
Let $z^*=(x^*, y^*)$ and $z_{\mu}^* = (x_\mu^*, y_\mu^*)$ denote the solutions of the saddle-point problems for functions $\varphi(z)$ and $\varphi_{\mu}(z)$ respectively. 
Setting $\mu = \frac{\varepsilon}{R^2}$ and using the inequality $\varphi_\mu\left(\overline{x}_N , y^*\right) - \varphi_\mu\left(x^*, \overline{y}_N\right) \leq \varphi_\mu\left(\overline{x}_N , y_\mu^*\right) - \varphi_\mu\left(x_\mu^*, \overline{y}_N\right)$ we obtain

\begin{equation}\label{connection}
    \begin{split}
         \,& \E \left[\varphi(\overline{x}_N, y^*)  \right] -
    \E \left[\varphi(x^*, \overline{y}_N) \right] \leq \max_{y\in \mathcal{Y}} \, \E \left[\varphi(\overline{x}_N, y)  \right] -
    \min_{x\in \mathcal{X}}\E \left[\varphi(x, \overline{y}_N) \right]  \\
     &=
    \max_{x\in \mathcal{X}, y\in \mathcal{Y}} \E
    \left[ \varphi_\mu\left(\overline{x}_N , y\right) - \varphi_\mu\left(x, \overline{y}_N\right) - \frac{\mu x_N^2}{2} + \frac{\mu y^2}{2} + \frac{\mu x^2}{2} - \frac{\mu y_N^2}{2} \right]  \\
     & \leq \max_{x\in \mathcal{X}, y\in \mathcal{Y}} \mathbb{E}  \left[ \varphi_\mu\left(\overline{x}_N , y\right) - \varphi_\mu\left(x, \overline{y}_N\right)  + \frac{\mu z^2}{2} \right]\\
     &\leq \max_{x\in \mathcal{X}, y\in \mathcal{Y}} \mathbb{E}  \left[ \varphi_\mu\left(\overline{x}_N , y\right) - \varphi_\mu\left(x, \overline{y}_N\right) \right] + \frac{\varepsilon}{2}\\
     &= \max_{y\in \mathcal{Y}} \E \left[\varphi_\mu(\overline{x}_N, y)  \right] -
    \min_{x\in \mathcal{X}}\E \left[\varphi_\mu(x, \overline{y}_N) \right] + \frac{\varepsilon}{2}\\
    \end{split}
\end{equation}

{\bf Step 2.} 
Now we apply Theorem \ref{theorem_highorder_smooth_strongly_convex} for $\varphi_{\mu}(z)$ until function error is not greater than $\frac{\varepsilon}{2}$:
\begin{equation}\label{ref-theorem-1}
    \max_{y\in \mathcal{Y}} \E \left[\varphi_\mu(\overline{x}_N, y)  \right] -
    \min_{x\in \mathcal{X}}\E \left[\varphi_\mu(x, \overline{y}_N) \right] \leq \dfrac{1}{\mu} \left(n^{2-\frac{1}{\beta}}\dfrac{A_1}{N^{\frac{\beta-1}{\beta}}}+A_2\dfrac{n(1+\ln{N})}{N} \right) \leq \dfrac{\varepsilon}{2}.
\end{equation}

Using that $\mu=\frac{\varepsilon}{R^2}$ the inequality \eqref{ref-theorem-1} is done if
\begin{equation}\label{th2-step-2}
    \max\left\{n^{2-\frac{1}{\beta}}\dfrac{A_1}{N^{\frac{\beta-1}{\beta}}}, A_2\dfrac{n(1+\ln{N})}{N} \right\} 
    \leq \dfrac{\mu \varepsilon}{2}
    = \dfrac{\varepsilon^2}{2R^2} .
\end{equation}

It is true that  $1 + \ln N \leq c' N^{\frac{\rho}{\rho + 1}}$ for some $c' > 0$. So the inequality \eqref{th2-step-2} holds if
\begin{equation}
    N \geq \max\left\{ \left(R\sqrt{2A_1}\right)^{\frac{2\beta}{\beta-1}}\dfrac{n^{2+\frac{1}{\beta-1}}}{\varepsilon^{2+\frac{2}{\beta-1}}},\left(R\sqrt{2c'A_2}\right)^{2(1+\rho)}\dfrac{n^{1+\rho}}{\varepsilon^{2(1+\rho)}}\right\}.
\end{equation}

The inequalities \eqref{connection} and \eqref{ref-theorem-1} yield \eqref{error_less_than_eps}.
\EndProof
\end{proof}

\section{Kernel examples}\label{appendix:kernels}

A weighted sum of Legendre polynoms is an example of such kernels:
\begin{equation}\label{kernels}
    K_{\beta}(r) := \sum\limits_{m=0}^{l(\beta)} p_{m}'(0)p_m(r),
\end{equation}
where $l(\beta)$ is maximal integer number strictly less than $\beta$ and $p_m(r) = \sqrt{2m+1} L_m(r)$, $L_m(u)$ is Legendre polynom. We have
\begin{equation*}
    \mathbb E \left[ p_m p_{m'}\right]=\delta(m-m').
\end{equation*}

As $\{p_m(r)\}_{m=0}^{j}$ is a basis for polynoms of degree less than or equal to $j$ we can represent $u^j := \sum\limits_{m=0}^{j} b_m p_m(r)$ for some integers $\{b_m\}_{m=0}^{j}$ (they depend on $j$). 

Let's calculate the expectation

\begin{equation*}
    \mathbb E \left[ r^j K_{\beta}(r)\right] = \sum\limits_{m=0}^{j} b_m p_m'(0) = (r^j)'|_{r=0} = \delta(j-1),
\end{equation*}
here $\delta(0) = 1$ and $\delta(x) = 1$ if $x \neq 0$. We proved that the presented $K_{\beta}(r)$ satisfies \eqref{kernel-properties}. We have the following kernels for different betas (see Figure \ref{fig:kernels}):

\begin{eqnarray*}
    &K_{\beta}(r) = 3r, \quad &\beta \in [2, 3],\\
    &K_{\beta}(r) = \dfrac{15r}{4} (5-7r^2), \quad &\beta \in (3, 5],\\
    &K_{\beta}(r) = \dfrac{105r}{64} (99r^4 - 126r^2 + 35), \quad &\beta \in (5, 7].
\end{eqnarray*}

\begin{figure}[h]
\centering
\includegraphics[width =  0.78\linewidth]{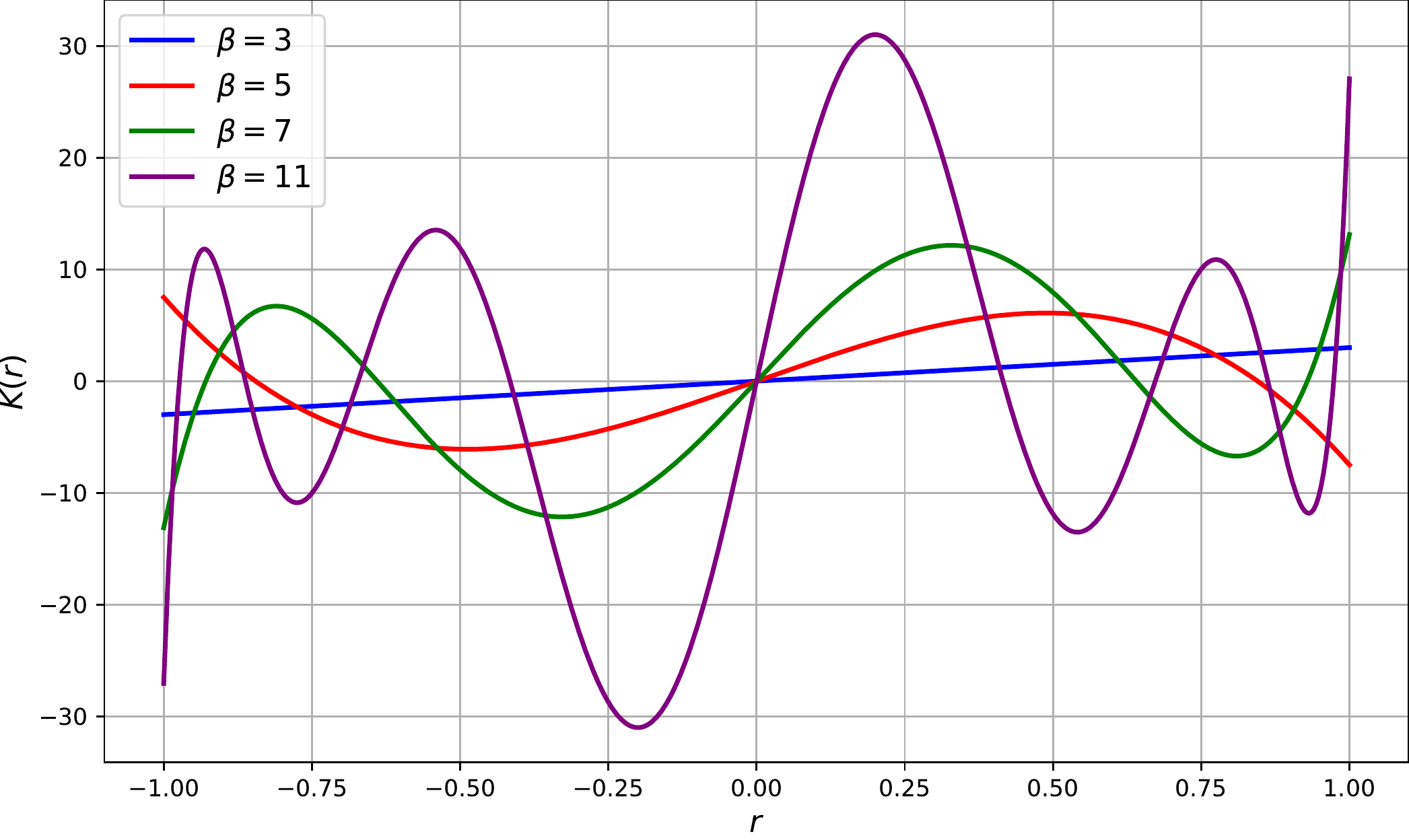}
\caption{Examples of kernels from \eqref{kernels}}
\label{fig:kernels}
\end{figure}

\end{document}